 \newcommand{\ba}{\begin{align*}}
 \newcommand{\ea}{\end{align*}}
 \def\ExtendSymbol#1#2#3#4#5{\ext@arrow 0099{\arrowfill@#1#2#3}{#4}{#5}}
 \def\ExtendSymbol#1#2#3#4#5{\ext@arrow 0099{\arrowfill@#1#2#3}{#4}{#5}}
 \definecolor{hao}{rgb}{1,0.5,0}
 \definecolor{miao}{cmyk}{0.5,0,0.2,0.2}
 \definecolor{qiao}{gray}{0.96}
\newtheorem{thm}{Theorem}[section]
\newtheorem{cor}[thm]{Corollary}
\newtheorem{lem}[thm]{Lemma}
\newtheorem{conj}[thm]{Conjecture}
\theoremstyle{definition}
\newtheorem{defn}[thm]{Definition}
\theoremstyle{remark}
\newtheorem{rem}[thm]{Remark}
\numberwithin{equation}{section}
 \title{Ricci flow on asymptotically Euclidean manifolds}
 \author{Yu Li}
\begin{document}
\maketitle

\begin{abstract}
In this paper, we prove that if an asymptotically Euclidean manifold with nonnegative scalar curvature has long time existence of Ricci flow, the ADM mass is nonnegative. In addition, we give an independent proof of positive mass theorem in dimension three.
\end{abstract}

\tableofcontents

\section{Introduction}
A smooth orientable Riemannian manifold $(M^n,g)$ ($n \ge 3$) is called an \emph{Asymptotically Euclidean} (AE) manifold if for some compact $K \subset M^n$, $M^n \backslash K$ consists of a finite number of components $E_1,\ldots, E_k$ such that for each $E_i$
 there exists a $C^{\infty}$ diffeomorphism $\Phi_i:E_i \to \mathbb{R}^n \backslash B(0,A_i)$ such that under this identification,
\begin{equation} \label{E100}
g_{ij} =\delta_{ij}+O(r^{-\sigma_i}), \quad \partial^{|k|}g_{ij} =O(r^{-\sigma_i-k})
\end{equation} 
for any partial derivative of order $k$ as $r \to \infty$, where $r$ is the Euclidean distance function. We call the positive number $\sigma_i$ the order of end $E_i$.

The ADM mass \cite{ADM61} from general relativity of an AE manifold $(M,g)$ is defined as 
$$
m(g)=\lim_{r \to \infty} \int_{S_r}(\partial_ig_{ij}-\partial_j g_{ii})\,dA^j,
$$
where $dA^j=\partial_j \lrcorner dV_{g_E}$ and $g_E$ is the canonical Euclidean metric on $\mathbb R^n$.

The definition of mass involves a choice of asymptotic coordinates. But it follows from Bartnik's result \cite{B86} that if the order $\sigma>(n-2)/2$ and the scalar curvature is integrable, then the mass is finite and independent of AE coordinates. In other words, $m(g)$ depends only on the metric $g$.

The general positive mass conjecture is the following, see \cite[Theorem $10.1$]{LP87}.
\begin{conj}[Positive Mass Conjecture]
Let $(M^n,g)$ be an AE manifold of dimension $n \ge 3$ with the order $\sigma >(n-2)/2$, and nonnegative integrable scalar curvature. Then $m(g) \ge 0$ with equality if and only if $(M,g)=(\mathbb R^n,g_E)$. 
\end{conj}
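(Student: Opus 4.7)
The plan is to approach the conjecture via Ricci flow, which is the framework suggested by the paper. Given an initial AE metric $g_0$ with $R(g_0) \geq 0$ and order $\sigma > (n-2)/2$, consider the solution $g_t$ to $\partial_t g_{ij} = -2 R_{ij}$. First I would establish short time existence preserving the AE asymptotics \eqref{E100}, using DeTurck's trick combined with weighted \Holder or Sobolev space analysis tailored to the order $\sigma$. Second, since the scalar curvature evolves by $\partial_t R = \Lap R + 2|\mathrm{Ric}|^2$, a maximum principle adapted to the AE setting (exploiting that $R \to 0$ at infinity) should yield $R(g_t) \geq 0$ as long as the flow exists.

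The central computation is the evolution of the ADM mass. From $\partial_t g_{ij} = -2 R_{ij}$ and the contracted second Bianchi identity $\nabla^i R_{ij} = \tfrac12 \nabla_j R$, the boundary integrand satisfies
$$\tfrac{d}{dt}\bigl(\partial_i g_{ij} - \partial_j g_{ii}\bigr) = -2\bigl(\partial_i R_{ij} - \partial_j R\bigr),$$
which in the limit $r \to \infty$ should reduce, modulo lower order terms controlled by the decay $\sigma$, to a flux of $\nabla R$ through $S_r$. Applying the divergence theorem expresses $\tfrac{d}{dt} m(g_t)$ as a bulk integral of $\Lap R = \partial_t R - 2|\mathrm{Ric}|^2$, which one hopes to sign using $R \geq 0$. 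The target statement is that $m(g_t)$ is monotone non-increasing (or constant) along the flow.

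Assuming long time existence, one then studies the asymptotic behavior as $t \to \infty$. The goal is to show that $(M, g_t)$ converges, in an appropriate Cheeger--Gromov pointed sense, to flat Euclidean space $(\R^n, g_E)$; continuity of the mass under such convergence together with $m(g_E) = 0$ and the monotonicity would yield $m(g_0) \geq 0$. For rigidity, if $m(g_0) = 0$ then $m(g_t) \equiv 0$ and the strong maximum principle applied to $R$ forces $R \equiv 0$ along the flow, hence $\mathrm{Ric} \equiv 0$; combined with the AE structure and vanishing mass this forces $(M,g) = (\R^n, g_E)$.

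The hardest step by far is the long time existence together with the convergence analysis, and this is precisely why the paper obtains only a conditional result in general dimension. In dimension three, the extra structure provided by Hamilton--Ivey pinching and Perelman's surgery theory offers the curvature control and limiting geometry needed to carry out Step 3 unconditionally, which is the route behind the paper's independent proof of the positive mass theorem in that dimension.
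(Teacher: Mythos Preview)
Your outline tracks the paper's general strategy, but there are two genuine gaps that would prevent the argument from closing even granting long time existence.

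First, the mass evolution step does not produce a useful monotonicity. The paper shows (Theorem~\ref{R01}) that $m'(g(t)) = \lim_{r\to\infty}\int_{S_r}\nabla_j R\,dA^j = 0$ exactly, because $|\nabla R|$ decays fast enough; there is no signed bulk integral to exploit. So the mass is \emph{constant}, and you cannot conclude $m(g_0)\ge 0$ from monotonicity alone---you must actually identify the limiting mass. This leads to the second and more serious gap: your endgame relies on ``continuity of the mass under Cheeger--Gromov pointed convergence,'' but mass is a quantity at spatial infinity and is \emph{not} continuous under pointed convergence on compact sets. The paper's substitute is substantial: it uses Perelman's $\mu$-functional (Section~3, especially Theorem~\ref{T202}, $\lim_{\tau\to\infty}\mu(g,\tau)=0$) to rule out nontrivial blow-up limits, then derives $|\mathrm{Rm}|\le C(1+t)^{-1-\delta}$ via Li--Yau/Harnack estimates, upgrades this to convergence of $g_{ij}(t)\to g_{ij}(\infty)$ in the \emph{weighted} space $C^\infty_{-\sigma'}$ (Theorem~\ref{T_10}), and only then computes $m(g(0))-m(g(\infty))$ explicitly as $\lim_{t\to\infty}\int R(t)\,dV\ge 0$. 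Your sketch omits the entropy machinery entirely, and without it there is no mechanism to obtain the weighted decay needed to compare masses.

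Your rigidity step also has a gap: $m(g_0)=0$ with $m$ constant does not by itself force $R\equiv 0$ via the maximum principle. The paper instead uses the identity $m(g(0))=\lim_{t\to\infty}\int R(t)\,dV_t$ together with an ODE comparison on $\int R\,dV$ to show this limit cannot vanish unless $R\equiv 0$ from the start. Finally, in dimension three the paper does not merely invoke surgery: the decisive point (Theorem~\ref{T701}) is proving that for suitably small surgery parameters there are only \emph{finitely many} surgeries, which again rests on the $\mu$-functional estimates you have not introduced.
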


In dimension three, the positive mass conjecture was first proved by Schoen and Yau \cite{SY79} in 1979 by constructing a stable minimal surface and considering its stability inequality. In addition, Schoen and Yau showed that their method could be extended to the case when the dimension was less than eight \cite{R81,SY792}. In 1981, Witten \cite{W81} proved the positive mass conjecture for spin manifolds of any dimension. In 2001, Huisken and Ilmanen \cite{GT01} proved the stronger Riemannian Penrose inequality in dimension three by using the inverse mean curvature flow. In 2015, Hein and LeBrun gave a proof of the positive mass conjecture for K\"ahler AE manifolds, see \cite{HL15}. To the author's knowledge, there is no proof of the positive mass conjecture in general dimension.

A natural question arises, can we prove the positive mass conjecture by using other geometric flows? Since Ricci flow is one of the most powerful geometric flows by which Perelman have completely solved \text{Thurston's} geometrization conjecture, see \cite{P03,P031,P032}, it is of interest to know how Ricci flow interacts with AE manifolds and the ADM mass. 

Recall that Ricci flow is a geometric flow such that a family of metrics $g(t)$ on a smooth manifold $M$ are evolved under the PDE
\begin{align}\label{E001}
\partial_t g(t)=-2Rc(g(t)).
\end{align}
We will focus on the case when $(M,g(0))$ is an AE manifold.

It has been proved by Dai and Ma in \cite{DM07} that Ricci flow preserves the ALE condition, nonnegative integrable scalar curvature and the ADM mass. Hence, it is important to understand the change of mass at possible singular times and infinity if long time existence of Ricci flow is assumed.

One of the main theorems in this paper shows that if we have long time existence of Ricci flow, an AE manifold will converge to the Euclidean space in some strong sense. The proof is partially motivated by considering possible steady solitons on ALE manifolds, see Appendix. The convergence at time infinity will indicate that the mass is nonnegative along the flow.

We assume throughout this paper that the scalar curvature $R$ is nonnegative and integrable, the manifold has only one end $E$ \footnote{In fact, all the arguments below apply to the multi-end case with slight modifications.}and the order of the end $\sigma$ is greater than $(n-2)/2$. Moreover, we fix a positive smooth function $r(x)$ on $M$ such that $r(x)=|\Phi(x)|$ when $x \in E$, where $\Phi$ is the diffeomorphism in the definition of AE manifolds. We also identify $x \in E$ with $\Phi(x) \in \mathbb R^n$ without explicitly mentioning $\Phi$.

Moreover, we assume that the order $\sigma \le n-2$ since if an AE manifold is of order greater than $n-2$, then it is also of order $n-2$.

\begin{thm}\label{T101}
Let $(M^n,g)$ be an AE manifold with above assumptions. If there exists a solution $g(t) \, (0 \le t < \infty)$ of the Ricci flow with $g(0)=g$, then the mass $m(g) \ge 0$ with the equality if and only if $(M^n,g)=(\mathbb R^n,g_{E})$.
\end{thm}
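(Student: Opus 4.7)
\emph{Proof proposal.} The strategy is to combine the Dai--Ma invariance results with a long-time convergence theorem asserting that $(M^n,g(t))$ approaches $(\mathbb R^n,g_E)$ as $t\to\infty$, and then to deduce $m(g)\ge 0$ from that asymptotic picture. By \cite{DM07}, the AE structure, the nonnegativity and integrability of scalar curvature, and the ADM mass itself are all preserved along \eqref{E001}; in particular $m(g(t))=m(g)$ for every $t\ge 0$, and $R(g(t))\ge 0$ with $\int_M R\,dV_{g(t)}<\infty$. In addition, the maximum principle applied to $\partial_t R=\Delta R+2|\mathrm{Rc}|^2$ keeps $R\ge 0$ strictly under the flow, and gives useful lower bounds on $R$ near a would-be singularity.

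The heart of the proof is the long-time convergence. I would try to establish uniform-in-$t$ curvature bounds $|\mathrm{Rm}|(x,t)\to 0$ combined with uniform AE control at spatial infinity, using: (i) Perelman's pseudolocality theorem, which converts the asymptotic flatness of $g(0)$ on the end $E$ into a uniform curvature bound outside some compact set for all $t$; (ii) Shi's derivative estimates to promote this $C^0$ bound to higher-order bounds; (iii) Perelman's $\kappa$-noncollapsing to prevent degeneration of the geometry on compact subsets. Any pointed $C^\infty$ limit of $(M,g(t_i),p)$ is then a complete ancient (or eternal) solution with $R\ge 0$, and one can analyze blow-down limits to obtain a nonflat steady Ricci soliton on an ALE manifold. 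The appendix rules out such solitons, forcing the limit to be $(\mathbb R^n,g_E)$; the hint in the introduction that the proof is ``motivated by considering possible steady solitons on ALE manifolds'' indicates this is where that classification gets used.

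With the convergence in hand, deducing $m(g)\ge 0$ is the final step. Because mass is an asymptotic invariant, it is not continuous under pointed Cheeger--Gromov convergence, so one cannot merely take a limit; instead one must find a monotone quantity bridging the two. My plan is to identify a functional $\mathcal{M}(g(t))$ (for instance, an AE version of Perelman's $\lambda$- or $\mathcal{F}$-functional, or an integrated Hawking-type mass on a foliation by large coordinate spheres) that on the one hand bounds $m(g(t))$ from below and on the other tends to $0$ as $g(t)\to g_E$. Since $m(g(t))\equiv m(g)$ is constant in $t$, passing to the limit $t\to\infty$ gives $m(g)\ge 0$. For the rigidity statement, equality $m(g)=0$ should force equality in every intermediate bound along the flow, which combined with the uniqueness of the Euclidean limit and the classification of steady solitons should yield $(M,g)\cong(\mathbb R^n,g_E)$ at time $0$.

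\textbf{Main obstacle.} The hardest single step is the long-time convergence: Ricci flow on noncompact manifolds routinely admits pathological asymptotic behavior, and the AE setup requires simultaneous control over the geometry on compact sets and at spatial infinity, uniformly in $t$. Without uniform curvature decay and noncollapsing one cannot extract a well-defined limit, and without the steady-soliton rigidity on ALE manifolds one cannot identify that limit as flat. The conservation of mass from Dai--Ma is easy to invoke; everything else rests on these two technical pillars.
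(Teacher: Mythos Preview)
Your overall architecture is right (mass is constant along the flow, the flow converges to Euclidean space, hence $m(g)\ge 0$), but both of your two ``technical pillars'' are mis-identified, and the final bridge from convergence to $m\ge 0$ is a real gap.

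\textbf{Convergence.} The paper does not use pseudolocality or the ALE steady-soliton classification here; the remark in the introduction is only motivational. The actual engine is Perelman's $\mu$-functional: one proves the nontrivial fact that for an AE manifold with $R>0$ one has $\lim_{\tau\to\infty}\mu(g,\tau)=0$ (this requires a careful analysis of minimizers of the Euler--Lagrange equation and a Moser iteration). Combined with monotonicity, this shows any blow-up limit at $t=\infty$ has $\mu\ge 0$, hence is Euclidean, which rules out Type~IIb and forces $t\,\sup_M|\mathrm{Rm}|\to 0$. Pseudolocality alone cannot give this: it yields only short-time control near almost-Euclidean regions, not a global $|\mathrm{Rm}|\le \epsilon/(1+t)$. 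After that, the paper runs a Li--Yau/Harnack argument on a scalar heat equation with initial data $r^{-2-\sigma}$ to bootstrap to $|\mathrm{Rm}|\le C(1+t)^{-1-\delta}$, which is what makes $g(t)$ actually converge.

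\textbf{From convergence to $m\ge 0$.} Your plan to find an auxiliary monotone functional is unnecessary and does not match the paper. The key point you are missing is that the convergence $g(t)\to g_\infty=g_E$ can be upgraded to convergence in the \emph{weighted} space $C^\infty_{-\sigma'}$ for any $\sigma'<\sigma$ (by reproving the curvature decay with an extra spatial weight on a space-time region $\{r\ge t^{a}\}$). Once you have weighted convergence, the mass formula $R=\operatorname{div}\chi+E(g)$ with $\chi_j=\partial_i g_{ij}-\partial_j g_{ii}$ lets you write
\[
m(g)=m(g(t))-m(g_\infty)=\int \eta\,R(t)\,dV_{g_E}+o(1)\qquad (t\to\infty),
\]
since $m(g_\infty)=0$ and the error $E(g(t))-E(g_\infty)$ is controlled by $\|g(t)-g_\infty\|_{C^2_{-\sigma'}}r^{-2\sigma'-2}$, which is integrable and tends to $0$. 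This gives $m(g)\ge 0$ directly. Rigidity then follows from the differential inequality $\frac{d}{dt}\int R\,dV\ge -C(1+t)^{-1-\delta}\int R\,dV$, which forces $R\equiv 0$ if the limit vanishes.
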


Under Ricci flow, it is possible that the metric becomes singular at some finite time. In dimension three, we can continue Ricci flow by performing surgeries. We prove that the mass and other related conditions are preserved under Ricci flow with surgery. Moreover, if we choose surgery parameter function $\delta(t)$ small enough, there are only finitely many surgeries. The finiteness of surgeries is proved by carefully examining the change of Perelman's $\mu$-functional over surgery times.
By choosing one appropriate Ricci flow with surgery, we have the long time existence of Ricci flow after the last surgery time and Theorem \ref{T101} applies.

\begin{thm}\label{T102}
When $n=3$, the mass $m(g) \ge 0$ with the equality if and only if $(M^3,g)=(\mathbb R^3,g_{E})$.
\end{thm}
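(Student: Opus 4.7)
The plan is to reduce Theorem~\ref{T102} to Theorem~\ref{T101} by running Ricci flow with surgery on the given AE $3$-manifold $(M^3, g)$ and showing that after finitely many surgeries the flow becomes nonsingular, at which point Theorem~\ref{T101} yields the desired conclusion.

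First I would construct Ricci flow with surgery on $(M^3, g)$ by adapting Perelman's framework to the AE setting. Since \cite{DM07} preserves the AE condition of order $\sigma$ along smooth Ricci flow, a pseudolocality argument provides uniform curvature control at infinity on any finite time interval; hence any finite-time singularity must form inside a compact set. One then performs standard $\delta$-neck surgery within that compact region, discarding closed components that do not contain the distinguished end. Because surgery modifies the metric only on a compact set, the ADM mass (a limit at infinity) is unchanged, the AE structure with order $\sigma$ on the surviving component is preserved, nonnegativity of $R$ is preserved by the maximum principle, and integrability of $R$ survives since the discarded compact piece contributes a finite nonnegative integral that is simply removed. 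Between surgery times, \cite{DM07} preserves all of the hypotheses of Theorem~\ref{T101}.

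The principal technical obstacle is showing that only finitely many surgeries can occur. For this I would adapt Perelman's $\mu$-functional analysis to the AE setting. Integrability of $R$ together with Euclidean asymptotics ensures that $\mu(g(t), \tau)$ is well defined and satisfies a two-sided bound (with $\mu \le \mu(g_E, \tau) = 0$ and a lower bound depending on the initial geometry). Along smooth Ricci flow $\mu$ is monotone nondecreasing; provided the surgery parameter $\delta(t)$ is chosen sufficiently small, each surgery decreases $\mu$ by at least a uniform amount $\eta > 0$. Balancing the bounded smooth increases against these quantised surgery drops then forces the total number of surgeries to be finite.

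Once finiteness is established, let $T^{*}$ denote the last surgery time. For $t \ge T^{*}$ the Ricci flow is smooth and exists for all later times on the retained AE component, so Theorem~\ref{T101} applied to $g(T^{*})$ gives $m(g(T^{*})) \ge 0$, and by preservation of mass $m(g) = m(g(T^{*})) \ge 0$. If equality $m(g) = 0$ holds, then $m(g(T^{*})) = 0$, so by Theorem~\ref{T101} we have $(M(T^{*}), g(T^{*})) = (\mathbb{R}^3, g_E)$. Using backward uniqueness of Ricci flow together with the fact that any nontrivial surgery would strictly decrease $\mu$ while the $\mu$-functional on Euclidean space vanishes, one rules out the occurrence of any surgery, so the flow was smooth from the outset and the rigidity part of Theorem~\ref{T101} applied directly yields $(M^3, g) = (\mathbb{R}^3, g_E)$.
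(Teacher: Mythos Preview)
Your overall strategy---run Ricci flow with surgery, preserve mass and the AE end across surgeries, show finitely many surgeries, then invoke Theorem~\ref{T101}---is exactly the paper's route. The gap is in your mechanism for finiteness. You write that ``each surgery decreases $\mu$ by at least a uniform amount $\eta>0$'' and then balance these drops against bounded monotone increase. This is not what happens: surgery excises high-curvature regions and caps off necks, and the relevant estimate (Zhang's, recorded here as \eqref{S102}--\eqref{S105} via Lemmas~\ref{SL101}--\ref{SL102}) says only that $\mu$ changes by at most a \emph{small} controllable amount $ck(\sigma+1)^3h^3$ across a surgery, not that it drops by a definite quantum. There is also no uniform-in-$\tau$ lower bound on $\mu(g,\tau)$ for an AE manifold to trade against, so even if such drops existed the balancing argument would not close.

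The paper's finiteness proof (Theorem~\ref{T701}) runs in the opposite direction and hinges on an ingredient you omit: Theorem~\ref{T202}, that $\lim_{\tau\to\infty}\mu(g,\tau)=0$ on an AE manifold with $R>0$. If surgeries persisted indefinitely, then at a late singular time one finds an $\epsilon$-neck close to the round cylinder, forcing $\mu$ at a suitable scale to satisfy $\mu\le -2\epsilon_0<0$; tracing back through the earlier surgeries using monotonicity together with the small-error surgery control yields $\mu(g(0),\tau)\le -2\epsilon_0/3$ for some large $\tau$, contradicting Theorem~\ref{T202}. Making this uniform requires considering a sequence of flows with $\delta_i(0)\to 0$ and invoking the uniform version Theorem~\ref{TC202}. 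Your equality argument likewise rests on the incorrect ``surgery strictly decreases $\mu$'' claim and would need to be redone.
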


For the remainder of the paper, $C$ may vary from line to line. Moreover $\Delta=\Delta_{g(t)}$, $\nabla =\nabla_{g(t)}$ and $dV=dV_{g(t)}$ unless otherwise specified.

{\bf Acknowledgements}: I would like to express my gratitude to my advisor, Professor Bing Wang. He brought this problem to my attention and steered me in the right direction. I am also grateful to Professor Xiuxiong Chen and Professor G\'abor Sz\'ekelyhidi for their helpful discussions.

\section{Mass under Ricci flow}
We prove in this section that Ricci flow preserves the AE condition and the mass is unchanged under Ricci flow. Different from the argument of Dai and Ma in \cite{DM07}, we fix an AE coordinate system along the flow. The main tool we use is the following maximum principle on the noncompact manifold with evolving metrics, see \cite[Theorem $12.14$]{CCGGIIKLLN10}.

\begin{thm} \label{T103}
Suppose that $g(t)$, $t\in [0,T]$, is a complete solution to the Ricci flow on a noncompact manifold $M$ with $|\text{Rm}(g(t))| \le k_0$ for some $k_0>0$. Let 
$$
Lu=u_t-\Delta u-\langle X(t),\nabla u \rangle-G(u,t),
$$
where $X(t)$ is a smooth family of bounded vector fields and the function $G: \mathbb R \times [0,T] \to \mathbb R$ is locally Lipschitz in the $\mathbb R$ factor and continuous in the $[0,T]$ factor. Suppose that $u$ is a smooth function such that
$$
Lu \le 0 \quad \text{and} \quad |u(x,t)|\le \exp \left( b(d_{g(t)}(O,x)+1) \right)
$$
for some constant $b$. For any $c \in \mathbb R$, let $U(t)$ be the solution to the corresponding ordinary differential equation:
\begin{align}
\frac{dU}{dt}=G(U,t), \quad U(0)=c. \notag
\end{align}
If $u(x,0)\le c$ for all $x \in M$, then we have
$$
u(x,t) \le U(t)
$$
for all $x \in M$ and $t \in [0,T]$ as long as the ODE exists.
\end{thm}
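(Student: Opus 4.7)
The plan is a standard noncompact maximum-principle scheme: reduce to a linear inequality by subtracting the ODE solution, construct an exponential-in-distance barrier that dominates the growth of $u$, and conclude with the maximum principle applied to a combination whose positive supremum must occur at an interior point.

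Set $v(x,t)=u(x,t)-U(t)$. From $Lu\le 0$ and $U'=G(U,t)$ we obtain
\begin{equation*}
v_t-\Delta v-\langle X,\nabla v\rangle \le G(u,t)-G(U,t).
\end{equation*}
A forward-in-time bootstrap keeps $u$ in a bounded range on any slab $[0,T_0]\subset[0,T]$, so the local Lipschitz hypothesis on $G$ produces a constant $K$ with $G(u,t)-G(U,t)\le Kv$ on $\{v\ge 0\}$. Next, using $|\text{Rm}|\le k_0$, the Ricci flow distance-distortion estimate keeps $d_{g(t)}(O,\cdot)$ uniformly bi-Lipschitz to $d_{g(0)}(O,\cdot)$ on $[0,T_0]$; combining this with Laplace comparison and a Greene-Wu style mollification produces a smooth function $\eta\ge 1$ on $M$ comparable to $d_{g(0)}(O,\cdot)$ with $|\nabla\eta|_{g(t)}\le 1$ and $\Delta_{g(t)}\eta\le C_1$ uniformly. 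Fix $\alpha>b$ and choose $\beta$ large in terms of $\alpha$, $K$, $C_1$, and $\sup|X|$, so that the barrier
\begin{equation*}
\phi(x,t):=\exp\bigl(\alpha\,\eta(x)+\beta t\bigr)
\end{equation*}
satisfies $\phi_t-\Delta\phi-\langle X,\nabla\phi\rangle \ge (K+1)\phi$ pointwise.

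For each $\epsilon>0$ consider $w:=e^{-(K+1)t}(v-\epsilon\phi)$. Then $w(\cdot,0)\le -\epsilon\phi(\cdot,0)<0$, and because $\alpha>b$ dominates the growth rate of $u$, one has $w\to -\infty$ as $d_{g(t)}(O,x)\to\infty$ uniformly on $[0,T_0]$; if $\sup w>0$ it must be attained at some interior spacetime point $(x_0,t_0)$ with $t_0>0$. At that point $w_t\ge 0$, $\Delta w\le 0$, $\nabla w=0$, while combining the subsolution inequality for $v$ with the supersolution inequality for $\phi$ gives $w_t-\Delta w-\langle X,\nabla w\rangle \le -e^{-(K+1)t}v<0$ there, a contradiction since $w(x_0,t_0)>0$ forces $v(x_0,t_0)>\epsilon\phi(x_0,t_0)>0$. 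Hence $v\le \epsilon\phi$ on $M\times[0,T_0]$; sending $\epsilon\to 0$ and iterating in time yields $u\le U$ on $[0,T]$. The main obstacle I anticipate is producing the smooth function $\eta$ with uniform $C^2$ control valid simultaneously for every $g(t)$: the distance function itself is not smooth and Laplace comparison gives only a barrier-type inequality, so one must either adapt the maximum-principle step to Lipschitz barriers in the sense of Calabi's trick, or carry out a controlled smoothing of the distance function whose derivatives remain bounded under the bounded-curvature family $g(t)$. Everything else is routine bookkeeping.
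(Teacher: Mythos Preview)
The paper does not actually prove this theorem: it is quoted verbatim as \cite[Theorem~12.14]{CCGGIIKLLN10} and used as a black box, so there is no in-paper argument to compare against. Your barrier-plus-maximum-principle scheme is precisely the standard proof one finds in that reference (subtract the ODE solution, multiply by an exponential weight built from a distance-like function with controlled Laplacian, and force an interior contradiction), and your identification of the Calabi-trick/smoothing issue for $\eta$ as the only nonroutine step is accurate.

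One point you should tighten: the sentence ``a forward-in-time bootstrap keeps $u$ in a bounded range on any slab $[0,T_0]$'' is circular as written, since uniform boundedness of $u$ is essentially the conclusion. The local Lipschitz constant $K$ for $G$ is only available once you know $u$ lies in a fixed compact interval, but the growth hypothesis $|u|\le e^{b(d+1)}$ allows $u$ to be arbitrarily large in space. The clean way around this (and what the cited reference does) is an open--closed argument in $t$: on the closed set of times where $u\le U$ already holds, $u$ is bounded above by $\sup U$; then on a short forward interval one can truncate $G$ to a globally Lipschitz $\tilde G$ agreeing with $G$ on $(-\infty,\sup U+1]$, run your barrier argument with $\tilde G$ to get $u\le U$ there, and observe a posteriori that $u$ never left the region where $\tilde G=G$. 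Your final computation $w_t-\Delta w-\langle X,\nabla w\rangle\le -e^{-(K+1)t}v$ is correct once $K$ is legitimately in hand.
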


\begin{thm}\label{R01}
Let $(M,g(t))$ be a Ricci flow solution with bounded curvature on $M\times [0,T]$ and $(M,g(0))$ is an AE manifold of order $\sigma >0$, then
\begin{enumerate}[label=(\roman*)]
\item  AE condition is preserved with the same AE coordinates and order.
\item If $\sigma>(n-2)/2$ and $R$ is integrable, the mass is unchanged.
\end{enumerate}  
\end{thm}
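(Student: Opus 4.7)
I work in the fixed AE coordinate system $\Phi\colon E\to\mathbb{R}^n\setminus B(0,A)$ provided by $g(0)$ and propagate both the AE decay of $g(t)$ and the value $m(g(t))$ using the evolution equations of the Ricci flow directly in these coordinates.

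For (i), set $h_{ij}(x,t) := g_{ij}(x,t) - \delta_{ij}$, so that $h_{ij}(\cdot,0) = O(r^{-\sigma})$. Integrating the Ricci flow equation gives
$$
h_{ij}(x,t) = h_{ij}(x,0) - 2\int_0^t R_{ij}(x,s)\,ds,
$$
so preserving the decay of $h$ (and its coordinate derivatives) reduces to a uniform estimate $|\mathrm{Rm}(g(s))|\le Cr^{-\sigma-2}$ on $E\times[0,T]$, with analogous bounds for $\nabla^k\mathrm{Rm}$. Such an estimate holds at $s=0$ by the AE hypothesis on $g(0)$. To propagate it in $s$, I would apply Theorem \ref{T103} to a weighted quantity $u = |\mathrm{Rm}|^2\varphi$, where $\varphi$ is a smooth positive function on $M$ equal to $r^{2(\sigma+2)}$ on the end, combined with the standard reaction--diffusion inequality
$$
\partial_t|\mathrm{Rm}|^2 \le \Delta|\mathrm{Rm}|^2 - 2|\nabla\mathrm{Rm}|^2 + C_n|\mathrm{Rm}|^3.
$$
A direct computation yields
$$
\partial_t u - \Delta u = \langle -2\nabla\varphi/\varphi,\ \nabla u\rangle + \bigl(O(|\mathrm{Rm}|) + O(r^{-2})\bigr)u - 2|\nabla\mathrm{Rm}|^2\varphi,
$$
and on the end $|\nabla\varphi|/\varphi = O(r^{-1})$, $|\Delta\varphi|/\varphi = O(r^{-2})$, so the drift $X := -2\nabla\varphi/\varphi$ is bounded and the zero-order coefficient $G(u,t)$ is Lipschitz in $u$. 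Theorem \ref{T103} then gives the desired curvature decay. The analogous weighted max-principle argument applied to $|\nabla^k\mathrm{Rm}|^2$, using Shi-type evolution inequalities and bootstrapping from $k=0$, yields the higher-derivative bounds; since the Christoffel symbols of $g(t)$ in these coordinates are $O(r^{-\sigma-1})$, the decay transfers from covariant to coordinate derivatives without loss.

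For (ii), part (i) ensures $m(g(t))$ is defined for every $t\in[0,T]$. Differentiating inside the boundary integral,
$$
\frac{d}{dt}m(g(t)) = -2\lim_{r\to\infty}\int_{S_r}\bigl(\partial_i R_{ij} - \partial_j R\bigr)\,dA^j.
$$
The twice-contracted Bianchi identity $2g^{ik}\nabla_k R_{ij} = \nabla_j R$, rewritten in AE coordinates using $|\Gamma| = O(r^{-\sigma-1})$ and $|\mathrm{Rc}| = O(r^{-\sigma-2})$, gives $2\partial_i R_{ij} = \partial_j R + O(r^{-2\sigma-3})$. Since $\sigma > (n-2)/2$ we have $n-2\sigma-4 < -2$, so the error term contributes zero to the $S_r$-integral as $r\to\infty$, reducing the mass derivative to $-\tfrac{1}{2}\lim_r\int_{S_r}\partial_j R\,dA^j$. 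Applying the Euclidean divergence theorem over $B_r\setminus K$ turns this into a volume integral of $\Delta_E R$, which differs from $\Delta_g R$ by integrable terms of order $r^{-2\sigma-4}$. Using $\partial_t R = \Delta_g R + 2|\mathrm{Rc}|^2$, the integrability of $R$ preserved along the flow, and the decay from (i), this limit vanishes, so $m(g(t))$ is constant on $[0,T]$.

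The principal obstacle is the weighted maximum principle in (i): the weight $\varphi$ must be chosen so that the cross terms generated by commuting it past the heat operator are genuinely absorbable into a bounded drift and a Lipschitz zero-order reaction, and the exponential growth hypothesis on $u$ in Theorem \ref{T103} must be verified. Here bounded curvature makes the intrinsic distance $d_{g(t)}(O,\cdot)$ comparable to the Euclidean distance $r$ on $E$, so the polynomial growth of $u$ easily fits under an exponential envelope and the drift-boundedness follows from the decay rates of $\nabla r$ and $\Delta r$. Once (i) supplies the curvature decay, part (ii) is a more-or-less mechanical Bianchi-and-divergence-theorem computation, with the integrability of $R$ doing the real work in the final limit.
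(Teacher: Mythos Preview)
Your argument for (i) is essentially the paper's own proof: the same weight $\varphi = r^{2\sigma+4}$, the same reaction--diffusion inequality for $|\mathrm{Rm}|^2$, the same appeal to Theorem~\ref{T103}, and the same induction on $|\nabla^k\mathrm{Rm}|^2$ via Shi-type inequalities. One point you glide over (and should make explicit) is that $|\nabla_{g(t)}\varphi|/\varphi$ and $|\Delta_{g(t)}\varphi|/\varphi$ must be bounded for the \emph{evolving} metric, not just $g(0)$; the paper handles this by writing down $\partial_t|\nabla h|^2 = 2\mathrm{Rc}(\nabla h,\nabla h)$ and $\partial_t(\Delta h)=2\langle\mathrm{Rc},\nabla^2 h\rangle$ and integrating in time using the curvature bound.

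For (ii) your reduction matches the paper's up to the point $m'(g(t)) = \lim_{r\to\infty}\int_{S_r}\nabla_j R\,dA^j$ (your factor $-\tfrac12$ is a slip: after Bianchi the coefficient is $+1$). The gap is in the last step. Converting the sphere flux to $\int_{B_r}\Delta_g R$ and invoking $\partial_t R = \Delta_g R + 2|\mathrm{Rc}|^2$ together with $R\in L^1$ shows at best that the limit \emph{exists}, not that it is zero: finiteness of $\int_M \partial_t R$ and $\int_M|\mathrm{Rc}|^2$ gives finiteness of $\int_M\Delta_g R$, but you still need the boundary term to vanish, which is exactly the statement in question. The pointwise decay from (i), $|\nabla R|=O(r^{-\sigma-3})$, only forces $\int_{S_r}|\nabla R|\,d\sigma\to 0$ when $\sigma>n-4$; for $n\ge 7$ and $\sigma$ near $(n-2)/2$ this fails, so the divergence-theorem shortcut does not close in general. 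The paper does not attempt this route: it cites \cite[Lemma~11]{MS12}, whose proof uses the parabolic smoothing of $\partial_t R=\Delta R+2|\mathrm{Rc}|^2$ for $t>0$ (plus $R\in L^1$) to obtain the needed integrability improvement on $\nabla R$, and then invokes \cite[Corollary~12]{MS12} for continuity of $m(g(t))$ at $t=0$. Your sketch needs either that citation or an explicit parabolic-regularity argument to finish.
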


\begin{proof}
$(i)$: Since $(M,g(0))$ is an AE manifold, there exists an end $E$ and a $C^{\infty}$ diffeomorphism $\Phi: E \to \mathbb R^n \backslash B(0,A)$ such that under this coordinate system
\begin{equation} \label{ER01}
g_{ij} =\delta_{ij}+O(r^{-\sigma}), \quad \partial^{|k|}g_{ij} =O(r^{-\sigma-k})
\end{equation} 
for all $k=1,2,\cdots$. \\
From this condition, it is easy to conclude that $|\nabla^k\text{Rm}(0)|=O(r^{-\sigma-k-2})$.

Since the Riemannian curvature is uniformly bounded on $[0,T]$, there exists an $S>0$ such that $|\text{Rm}|\le S$ on $M\times [0,T]$. Now we consider the evolution equation of $|\text{Rm}|^2$ \cite[$(2.57)$, $(6.1)$]{CLN06}
$$
\partial_t|\text{Rm}|^2 \le \Delta |\text{Rm}|^2+16|\text{Rm}|^3 \le \Delta |\text{Rm}|^2+16S|\text{Rm}|^2.
$$
Let $u=|\text{Rm}|^2e^{-16St}$, then $\partial_tu \le \Delta u$ on $M\times [0,T]$. \\
Next we prove that $u$ has the same spatial decaying condition as $u(0)$, see also \cite{DM07}.\\
Let $h(x)=r^{4+2\sigma}$ on $M$. We set $w=hu$ and it satisfies
$$
(\partial_t-\Delta)w \le Bw-2\nabla\log h\nabla w
$$
on $M\times [0,T]$, where $B=\frac{2|\nabla h|^2-h\Delta h}{h^2}$.  

We first show that under $|\text{Rm}| \le S$, $B$ is uniformly bounded on $M\times [0,T]$.

Since $|\text{Rm}| \le S$, the metrics $g(t)$ are uniformly comparable to $g(0)$. That is,
\begin{align}\label{E201f}
C^{-1}g(0) \le g(t) \le Cg(0)
\end{align}
on $M \times [0,T]$.

Now we have the following evolution equations for $|\nabla h|^2=|\nabla_{g(t)}h|_{g(t)}^2$ and $\Delta h=\Delta_{g(t)}h$,
\begin{align}
\partial_t |\nabla h|^2&=2Rc(\nabla h,\nabla h),\label{E201b} \\ 
\partial_t (\Delta h)&=2\langle Rc,\nabla^2h \rangle.\label{E201c} 
\end{align}
The proof of \eqref{E201b} is straighforward and the proof of \eqref{E201c} can be found in \cite[Lemma $2.30$]{CLN06}.

Therefore, from the curvature bound and \eqref{E201f}
\begin{align}\label{E201d}
|\partial_t |\nabla h|^2| &\le C|\nabla h|^2, \\ 
|\partial_t  (\Delta h)| & \le C |\nabla^2 h| \le C|\nabla^2_{g(0)}h|_{g(0)},
\end{align}
and by integration
\begin{align}\label{E201e}
|\nabla_{g(t)}h|_{g(t)}^2 &\le C|\nabla_{g(0)}h|_{g(0)}^2, \\ 
|\Delta_{g(t)}h| & \le C |\nabla^2_{g(0)}h|_{g(0)}.
\end{align}

To estimate $|\nabla_{g(0)}h|_{g(0)}^2$ and $|\nabla^2_{g(0)}h|_{g(0)}$, we use the given coordinate system of $g(0)$ at infinity. From the definition of $h$ and direct computations, it is easy to show that
\begin{align}\label{E201g}
|\nabla_{g(0)}h|_{g(0)}^2 &\le Cr^{6+4\sigma}, \\ 
|\nabla^2_{g(0)}h|_{g(0)} &\le Cr^{2+2\sigma}.
\end{align}

Therefore we have
\begin{align}\label{E201h}
|B|=\left|\frac{2|\nabla h|^2-h\Delta h}{h^2}\right| \le C\left|\frac{|\nabla_{g(0)}h|_{g(0)}^2}{h^2}\right|+C\left|\frac{|\nabla^2_{g(0)}h|_{g(0)}}{h}\right| \le Cr^{-2} \le C
\end{align}
where the last inequality is true since $r$ has a positive minimum.

From Theorem \ref{T103}, we conclude that $|w| \le C$ and hence $|\text{Rm}| \le Cr^{-2-\sigma}$ on $M\times [0,T]$.\\
\emph{Claim}: 
\begin{equation}\label{E2002}
|\nabla^k\text{Rm}| \le Cr^{-2-k-\sigma}. 
\end{equation}
\emph{Proof of the claim}:
We assume that the claim holds for all $0 \le l <k$.
Let $h_k=r^{4+2\sigma+2k}$ and $w_k=h_k|\nabla^k\text{Rm}|^2$, then from the evolution equation of $|\nabla^k \text{Rm}|^2$ \cite[$(6.24)$]{CLN06}
\begin{align}\label{E201a}
\partial_t|\nabla^k\text{Rm}|^2&=\Delta|\nabla^k\text{Rm}|^2-2|\nabla^{k+1}\text{Rm}|^2+\sum_{l=0}^k\nabla^l\text{Rm}*\nabla^{k-l}\text{Rm}*\nabla^k\text{Rm} \notag \\
& \le \Delta|\nabla^k\text{Rm}|^2+C\sum_{l=0}^k|\nabla^l\text{Rm}||\nabla^{k-l}\text{Rm}||\nabla^k\text{Rm}|
\end{align}
we have  
\begin{equation}\label{E2011}
(\partial_t-\Delta)w_k \le B_kw_k-2\nabla\log h_k\nabla w_k+C\sum_{l=0}^kh_k|\nabla^l\text{Rm}||\nabla^{k-l}\text{Rm}||\nabla^k\text{Rm}|
\end{equation}
where $B_k=\frac{2|\nabla h_k|^2-h_k\Delta h_k}{h_k^2}$ is uniformly bounded as before.
Moreover, by induction we have
$$
h_k|\nabla^l\text{Rm}||\nabla^{k-l}\text{Rm}||\nabla^k\text{Rm}| =h_k|\text{Rm}||\nabla^k\text{Rm}|^2 \le Cw_k
$$
for $l=0$ or $l=k$ and 
$$
h_k|\nabla^l\text{Rm}||\nabla^{k-l}\text{Rm}||\nabla^k\text{Rm}| \le h_kr^{-4-2\sigma-k}|\nabla^k\text{Rm}|=r^k|\nabla^k\text{Rm}| \le Cw_k^{1/2}
$$
for $0<l<k$.\\
From \eqref{E2011} we have
$$
(\partial_t-\Delta)w_k \le -2\nabla\log h_k\nabla w_k+C(w_k+w_k^{1/2}).
$$
From Theorem \ref{T103}, we conclude that $w_k$ is uniformly bounded on $M \times [0,T]$ since the the solution of the following ODE
\begin{align}
\frac{d\phi}{dt}&=C(\phi+\phi^{1/2}), \notag \\ 
\phi(0)&=c
\end{align}
is bounded on $[0,T]$. Therefore $|\nabla^k\text{Rm}| \le Cr^{-2-k-\sigma}$.

For any vector field $U$ on $M$, we have
\begin{align}
&|\log g(x,t)(U,U)-\log g(x,0)(U,U)| \notag \\
=&\left|\int_0^{t}\frac{-2Rc(x,s)(U,U)}{g(x,s)(U,U)} \,ds\right | \le C\int_0^{t}|\text{Rm}| \, ds \le Cr^{-\sigma-2}.
\end{align}

Therefore
\begin{equation}
g(t)(U,U)=g(0)(U,U)(1+O(r^{-2-\sigma})),
\end{equation}
and in particular,
\begin{align}\label{RE03}
g_{ii}(t)&=g_{ii}(0)(1+O(r^{-2-\sigma})) \notag \\
&=(1+O(r^{-\sigma}))(1+O(r^{-2-\sigma})) \notag \\
&=1+O(r^{-\sigma})
\end{align}
By the polarization identity and \eqref{RE03}, we conclude that $g_{ij}(t)=O(r^{-\sigma})$ when $i \ne j$.

Now from the evolution equation of the Christoffel symbol \cite[$(2.25)$]{CLN06}
$$
\partial_t\Gamma_{ij}^k=-g^{kl}(\nabla_iR_{jl}+\nabla_jR_{il}-\nabla_lR_{ij})
$$
and \eqref{E2002}, we conclude that $\Gamma_{ij}^k=O(r^{-\sigma-3})$ and hence $\partial_iR_{jk}=O(r^{-\sigma-3})$ from the relation $\nabla_iR_{jk}=\partial_iR_{jk}-\Gamma_{ij}^lR_{lk}-\Gamma_{ik}^lR_{jl}$.

Since $\partial_t( \partial_ig_{jk})=-2\partial_iR_{jk}$, it follows that $\partial_ig_{jk}(t)=O(r^{-\sigma-1})$. Now by induction, $\partial^{|k|}g_{ij} =O(r^{-\sigma-k})$ for all $k$ and hence $(E,g_{ij}(t))$ is an AE coordinate system with the same order $\sigma$.

$(ii)$: From the definition of the mass
$$
m(g(t))=\lim_{r \to \infty} \int_{S_r}(\partial_ig_{ij}(t)-\partial_j g_{ii}(t))\,dA^j.
$$
Since we have a common coordinate system at infinity,
\begin{align}
m'(g(t))&=\lim_{r \to \infty} \int_{S_r}(\partial_ig'_{ij}(t)-\partial_j g'_{ii}(t))\,dA^j \notag \\ \notag
&=\lim_{r \to \infty} -2\int_{S_r}(\partial_iR_{ij}(t)-\partial_j R_{ii}(t))\,dA^j\\ \notag
&=\lim_{r \to \infty} -2\int_{S_r}(\nabla_i R_{ij}(t)-\nabla_j R(t))\,dA^j \\ \notag
&=\lim_{r \to \infty} \int_{S_r}\nabla_j R(t)\,dA^j. 
\end{align}

Now from \cite[Lemma $11$]{MS12},
$$
\lim_{r \to \infty} \int_{S_r}|\nabla R(t)|\,d\sigma=0
$$
for $t>0$, so $m'(g(t))=0$ for $t>0$. 

On the other hand, it is easy to show that $m(g(t))$ is continuous at $0$, see  \cite[Corollary $12$]{MS12}, hence the mass is unchanged.
\end{proof}

\begin{rem}
The proof of Theorem \ref{R01} actually shows that if $g_{ij}(0)-\delta_{ij}\in C^k_{-\sigma}$, then $g_{ij}(t)-\delta_{ij}\in C^{k-2}_{-\sigma}$ for any integer $k \ge 4$ and $t>0$. In addition, using the argument in \cite{DM07} we can prove that if  $g_{ij}(0)-\delta_{ij}\in C^2_{-\sigma}$, then $g_{ij}(t)-\delta_{ij}\in C^{1,\alpha}_{-\sigma}$ for $t>0$. The definition of the weighted space can be found in Section $5$.
\end{rem}

Let $(M,g(t)), 0\le t \le T$ be a Ricci flow solution with bounded curvature on $M\times [0,T]$ such that $(M,g(0))$ is an AE manifold. By our assumption, the scalar curvature $R(x,0)\ge 0$. From the evolution equation of $R$, that is, $\partial_t R=\Delta R+2|Rc|^2 \ge \Delta R$ and Theorem \ref{T103}, $R(x,t) \ge 0$ on $M\times [0,T]$.

Now from the strong maximum principle under Ricci flow \cite[Lemma $6.57$]{CLN06}, either $R(x,t) >0$ for $(x,t)\in M \times (0,T]$ or $R(x,t)=0$ on $M\times [0,T]$.

In the first case, we redefine the Ricci flow $g_1(t)=g(t+\epsilon_1)$ where $\epsilon_1 \in (0,T)$ is fixed such that the corresponding scalar curvature $R_1(x,0) >0$ for all $x \in M$.

In the second case, the evolution equation of $R$ implies that $Rc(0)=0$, that is, $(M,g(0))$ is Ricci-flat. Now we have
\begin{thm}\label{R02}
If $(M,g)$ is a Ricci-flat AE manifold, then $(M,g)$ is isometric to $(\mathbb R^n,g_E)$.
\end{thm}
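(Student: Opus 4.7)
The plan is to invoke the Bishop--Gromov volume comparison theorem together with its rigidity statement. Since $(M,g)$ is Ricci-flat we have $Rc(g)\equiv 0 \ge 0$, so for any fixed basepoint $p\in M$ the function
\[
r \;\longmapsto\; \frac{\Vol(B_g(p,r))}{\omega_n r^n}
\]
is non-increasing in $r$ and bounded above by $1$, where $\omega_n$ denotes the Euclidean volume of the unit ball in $\mathbb{R}^n$. Note that $(M,g)$ is complete (an AE manifold has one compact piece together with ends diffeomorphic to $\mathbb R^n\setminus \overline{B}$), so Bishop--Gromov applies on all of $M$.

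Next I would compute the asymptotic volume ratio. Using the AE coordinates on the end $E$, the metric satisfies $g_{ij}=\delta_{ij}+O(r^{-\sigma})$ and $\sqrt{\det g}=1+O(r^{-\sigma})$, so the Riemannian volume form and the Euclidean volume form differ by a factor $1+O(r^{-\sigma})$. A standard comparison of the $g$-distance from $p$ to a coordinate point $x$ with $|x|=R$ yields $d_g(p,x)=R+O(R^{1-\sigma})$ as $R\to\infty$, so the Riemannian ball $B_g(p,r)$ differs from the coordinate ball $\{|x|\le r\}$ only in a region whose Euclidean volume is $o(r^n)$. Since $\sigma>0$, this gives
\[
\lim_{r\to\infty}\frac{\Vol(B_g(p,r))}{r^n}=\omega_n .
\]

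Combining this limit with the monotonicity of the Bishop--Gromov ratio forces
\[
\Vol(B_g(p,r))=\omega_n r^n \qquad\text{for every } r>0.
\]
The equality (rigidity) case of Bishop--Gromov then asserts that $B_g(p,r)$ is isometric to the Euclidean ball of radius $r$ for every $r>0$; taking $r\to\infty$ and using completeness we conclude $(M,g)$ is isometric to $(\mathbb R^n,g_E)$.

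The only part requiring genuine care is the asymptotic volume computation, where one must check that the Riemannian and Euclidean balls are comparable up to a set of volume $o(r^n)$; this is a direct consequence of the decay bounds \eqref{E100} together with $\sigma>0$, so no serious obstacle is expected.
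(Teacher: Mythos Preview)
Your proposal is correct and follows essentially the same approach as the paper: compute the asymptotic volume ratio of the AE manifold to be $1$, then invoke the rigidity case of Bishop--Gromov to conclude $(M,g)\cong(\mathbb R^n,g_E)$. The paper carries out the volume-ratio computation via two separate lemmas (first showing $r(x)/d(x)\to 1$, then $\Vol_g B(p,d)/\omega_n r^n\to 1$) with an $\varepsilon$--$r_1$ argument, whereas you compress this into the single distance estimate $d_g(p,x)=R+O(R^{1-\sigma})$ and the resulting $o(r^n)$ control on the symmetric difference of balls; the content is the same.
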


We fix a point $p$ on $M$ and let $d(x)=d_g(x,p)$ be the distance function to $p$. We first prove the following two lemmas.

\begin{lem}\label{LR01}
\begin{equation}\label{R02a}
\lim_{r\to +\infty}\frac{r(x)}{d(x)}=1 
\end{equation}
where $r$=$r(x)$.
\end{lem}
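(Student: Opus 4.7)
The plan is to prove both $\liminf_{r(x) \to \infty} d(x)/r(x) \ge 1$ and $\limsup_{r(x) \to \infty} d(x)/r(x) \le 1$, which together yield $\lim r(x)/d(x) = 1$. Note that the Ricci-flat hypothesis is not actually needed for this lemma; only the AE structure of $(M,g)$ (together with its completeness) will be used.

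For the \emph{upper bound on $d(x)$}, I construct an explicit comparison path. Fix a reference point $q_0$ on the Euclidean sphere $S_{2A} = \{r = 2A\}$; then $d(p, q_0)$ is bounded because $\{r \le 2A\}$ is $g$-compact. For $x$ with $r(x) > 2A$, let $q_x := 2A\,\Phi(x)/|\Phi(x)|$ be the radial projection of $x$ onto $S_{2A}$. Joining $q_0$ to $q_x$ along $S_{2A}$ contributes $g$-length at most $\diam_g(S_{2A}) = O(1)$, and the Euclidean line segment from $q_x$ to $x$ lies entirely in $\{r \ge 2A\}$. By the AE decay (\ref{E100}) one has $|\dot\gamma|_g = (1 + O(r^{-\sigma}))|\dot\gamma|_E$ along this segment, so its $g$-length equals
$$L_g = r(x) - 2A + \int_{2A}^{r(x)} O(s^{-\sigma})\, ds = r(x) + o(r(x)),$$
because the integral is $O(1)$ when $\sigma > 1$ and $O(r(x)^{1-\sigma})$ otherwise. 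Summing the three pieces, $d(x) \le r(x) + o(r(x))$, which gives $\limsup d(x)/r(x) \le 1$.

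For the \emph{lower bound}, let $\gamma:[0,L]\to M$ be a unit-speed minimizing $g$-geodesic from $p$ to $x$ with $L = d(x)$; this exists by Hopf--Rinow since $(M,g)$ is complete. The AE decay gives
$$|\nabla r|_g^2 = g^{ij}\partial_i r\,\partial_j r = 1 + O(r^{-\sigma}),$$
so for any $\epsilon > 0$ there exists $R$ such that $|\nabla r|_g \le 1 + \epsilon$ on $\{r > R\}$. Define $t^* := \max\{t \in [0, L] : r(\gamma(t)) \le R\}$, which exists because the preimage is closed in $[0,L]$ and contains $0$. For $r(x) > R$ we have $t^* < L$ and $r(\gamma(t^*)) = R$, so
$$r(x) - R = \int_{t^*}^L \langle \nabla r, \dot\gamma\rangle_g\,dt \le \int_{t^*}^L |\nabla r|_g\,dt \le (1+\epsilon)(L - t^*) \le (1+\epsilon)\, L.$$
Dividing by $r(x)$ and letting $r(x) \to \infty$ gives $\liminf d(x)/r(x) \ge 1/(1+\epsilon)$; since $\epsilon$ was arbitrary, $\liminf d(x)/r(x) \ge 1$.

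The main obstacle is the lower bound: a minimizing geodesic from $p$ to $x$ can in principle wander through the compact core, where no good gradient bound on $r$ is available. The key device is to integrate only from the \emph{last} exit time $t^*$ out of $\{r \le R\}$, where the AE bound $|\nabla r|_g \le 1+\epsilon$ actually applies, and to absorb the ``lost'' prefix $t^*$ into the trivial inequality $L - t^* \le L$. This is legitimate precisely because $x$ ultimately lies in the unbounded component $\{r > R\}$, so such a last exit time exists and lies strictly in $[0, L)$.
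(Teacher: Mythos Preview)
Your proof is correct and takes essentially the same approach as the paper's. Both arguments use the last exit time of a minimizing $g$-geodesic from a large radius sphere to obtain $r(x) \le (1+\epsilon)d(x)+O(1)$, and both construct a comparison path (yours radial, the paper's a Euclidean segment) in the AE region to get $d(x)\le (1+\epsilon)r(x)+O(1)$; your use of the bound $|\nabla r|_g \le 1+\epsilon$ is just a repackaging of the paper's metric comparison $(1+Cr^{-\sigma})^{-1}g_E\le g\le (1+Cr^{-\sigma})g_E$.
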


\begin{proof}
From the definition of AE manifolds, there exists a large positive number $r_0$ such that
\begin{align} \label{R02a}
(1+Cr^{-\sigma})^{-1}g_E(x) \le g(x) \le (1+Cr^{-\sigma})g_E(x).
\end{align}
for all $r(x) \ge r_0$.

Given $r_1 \ge r_0$ and large $r(x)$, let $\left\{\gamma (t), \, t\in [0,d(x)]\right \}$ be the minimizing geodesic from $p$ to $x$. Then there exists an $r_x \in [0,d(x)]$ such that $r(\gamma (r_x))=r_1$ and $r(\gamma(t)) \ge r_1$ for $t \in [r_x,d(x)]$. We assume that 
$r_x \in [C_1^{-1}r_1,C_1r_1]$ where $C_1$ depends on $r_1$.

Now we estimate the distance between $\gamma(r_x)$ and $x$ under $g_E$. We have
\begin{align} \label{R02d}
r(x)-r_1 \le \int_{r_x}^{d(x)}|\gamma'(t)|_{g_E} \, dt \le (1+Cr_1^{-\sigma})\left(\int_{r_x}^{d(x)}|\gamma'(t)|_{g} \, dt \right) \le (1+Cr_1^{-\sigma})d(x)
\end{align}
where we have used \eqref{R02a} to estimate $|\gamma'(t)|_{g_E}$.
Then we obtain from \eqref{R02d} that
\begin{align} \label{R02d1}
r(x)\le  (1+Cr_1^{-\sigma})d(x)+r_1.
\end{align}

On the other hand, let $\left\{\gamma_1 (t), \, t\in [0, a]\right \}$ be the minimizing geodesic from $\gamma(r_x)$ to $x$ under $g_E$. Similarly we have
\begin{align} \label{R02e}
d(x)-r_x \le \int_{0}^{a}|\gamma_1'(t)|_{g} \, dt \le  (1+Cr_1^{-\sigma})\left(\int_{0}^{a}|\gamma_1'(t)|_{g_E} \, dt \right) \le  (1+Cr_1^{-\sigma})(r(x)+r_1)
\end{align}
and hence
\begin{align} \label{R02e1}
d(x) \le  (1+Cr_1^{-\sigma})(r(x)+r_1)+r_x \le  (1+Cr_1^{-\sigma})r(x)+(1+Cr_1^{-\sigma}+C_1)r_1.
\end{align}

Combining \eqref{R02d1} and \eqref{R02e1}, we have
\begin{align} \label{R02g3}
(1+Cr_1^{-\sigma})^{-1} \le \liminf_{r \to +\infty}\frac{r(x)}{d(x)} \le \limsup_{r \to +\infty}\frac{r(x)}{d(x)}\le 1+Cr_1^{-\sigma}.
\end{align}
Since $r_1$ can be chosen as large as we want,
\begin{equation}\label{R02h}
\lim_{r\to +\infty}\frac{r(x)}{d(x)}=1 
\end{equation}
 and the proof of the lemma is complete.
\end{proof}

\begin{lem}\label{LR02}
\begin{equation}\label{ER01}
\lim_{r\to +\infty}\frac{\text{Vol}_gB(p,d(x))}{w_{n}r^n(x)}=1, 
\end{equation}
where $w_n$ is the volume of the unit ball in $\mathbb R^n$.
\end{lem}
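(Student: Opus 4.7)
The plan is to establish the two inequalities $\limsup \le 1$ and $\liminf \ge 1$ separately as $r(x) \to +\infty$, and then appeal to Lemma \ref{LR01} to swap $d(x)$ for $r(x)$ freely in the denominator.

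\textbf{Upper bound via Bishop--Gromov.} Since $(M,g)$ is Ricci-flat, $Rc \equiv 0 \ge 0$, and the Bishop--Gromov volume comparison theorem asserts that $R \mapsto \Vol_g B(p,R)/(w_n R^n)$ is non-increasing on $(0, \infty)$. This ratio tends to $1$ as $R \to 0^+$ on any smooth Riemannian manifold, so $\Vol_g B(p, R) \le w_n R^n$ for every $R > 0$. Applying this at $R = d(x)$ and using Lemma \ref{LR01} to replace $d(x)^n$ by $(1+o(1))\,r^n(x)$ yields $\limsup_{r \to \infty} \Vol_g B(p, d(x))/(w_n r^n(x)) \le 1$.

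\textbf{Lower bound via the AE structure.} Here I would re-use the estimates from the proof of Lemma \ref{LR01}. Fix $r_1 \ge r_0$ large. Inequality \eqref{R02e1}, applied to an arbitrary point $y$ in place of $x$, shows that for every $y$ with $r(y) \ge r_1$,
$$
d(y) \le (1 + Cr_1^{-\sigma})\,r(y) + C(r_1).
$$
Hence for all sufficiently large $R$,
$$
\Omega_{R,r_1} := \bigl\{y \in E : r_1 \le r(y) \le (R - C(r_1))(1 + Cr_1^{-\sigma})^{-1}\bigr\} \subset B(p, R).
$$
On the end, the AE condition \eqref{ER01} gives $\sqrt{\det g_{ij}} = 1 + O(r^{-\sigma})$, so the $g$-volume of $\Omega_{R,r_1}$ can be compared to its Euclidean volume with relative error $Cr_1^{-\sigma}$, producing
$$
\Vol_g(\Omega_{R,r_1}) \ge (1 - Cr_1^{-\sigma})\, w_n \Bigl( \bigl[(R - C(r_1))(1 + Cr_1^{-\sigma})^{-1}\bigr]^n - r_1^n \Bigr).
$$
Setting $R = d(x)$, dividing by $w_n r^n(x)$, and letting $r(x) \to \infty$ with $r_1$ held fixed (again using Lemma \ref{LR01}) gives
$$
\liminf_{r \to \infty} \frac{\Vol_g B(p, d(x))}{w_n r^n(x)} \ge \frac{1 - Cr_1^{-\sigma}}{(1 + Cr_1^{-\sigma})^n}.
$$
Taking $r_1 \to \infty$ forces the right-hand side to $1$.

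The principal bookkeeping obstacle is the two-parameter limit. The constants $C(r_1)$ and the multiplicative factor $(1 + Cr_1^{-\sigma})$ both depend on the choice of $r_1$, so I must take $R \to \infty$ first (which absorbs the additive $C(r_1)$ term and the $r_1^n$ contribution from excising a compact neighborhood of $K$) before letting $r_1 \to \infty$ to extinguish the remaining $r_1^{-\sigma}$ errors. Aside from this iterated limit, the argument is a straightforward packaging of Bishop--Gromov together with the AE expansion of the volume form, and no new analytic input is required.
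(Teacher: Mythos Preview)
Your proof is correct. The lower bound is essentially the paper's argument: both of you use the AE volume-form comparison $(1+Cr_1^{-\sigma})^{-1}dV_{g_E} \le dV_g$ on a Euclidean annulus contained in $B(p,d(x))$, and then run the iterated limit $r(x)\to\infty$ followed by $r_1\to\infty$.

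The upper bound is where you diverge. The paper does it symmetrically with the lower bound, using the reverse containment $B(p,d)\setminus B(p,e^{\epsilon(r_1)}r_1)\subset B(0,e^{\epsilon(r)}r)\setminus B(0,r_1)$ in the end together with $dV_g \le (1+Cr_1^{-\sigma})dV_{g_E}$; this uses only the AE structure and no curvature hypothesis. You instead invoke Bishop--Gromov, which is legitimate here because Lemma~\ref{LR02} sits inside the proof of Theorem~\ref{R02} where $Rc\equiv 0$ is assumed. Your route is shorter and avoids the second annulus bookkeeping, at the cost of making the lemma depend on $Rc\ge 0$; the paper's proof establishes Lemma~\ref{LR02} for arbitrary AE manifolds.
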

\begin{proof}
For the AE manifold, there exists an $r_0>0$ sufficiently large such that
\begin{align} \label{ER02}
(1+Cr^{-\sigma})^{-1}g_E(x) \le g(x) \le (1+Cr^{-\sigma})g_E(x)
\end{align}
and hence
\begin{align} \label{ER03}
(1+Cr^{-\sigma})^{-1}\text{Vol}_{g_E}(x) \le \text{Vol}_{g}(x) \le (1+Cr^{-\sigma})\text{Vol}_{g_E}(x)
\end{align}
for any $r(x) \ge r_0$.

For any $r(x) \ge r_0$, from Lemma \ref{LR01} there exists a function $\epsilon (r)>0$ with $\epsilon(r) \to 0$ as $r \to +\infty$ such that
\begin{align} \label{ER04}
e^{-\epsilon(r)}\le \frac{r(x)}{d(x)} \le e^{\epsilon(r)}.
\end{align}

Now we fix an $r_1 \ge r_0$. Then for any $r(x) > r_1$, we have
\begin{align} \label{ER05}
w_n\left((e^{-\epsilon(r)}r)^n-r_1^n\right)&=\text{Vol}_{g_E}\left(B(0,e^{-\epsilon(r)}r)\backslash B(0,r_1)\right) \\ \notag
&\le (1+Cr_1^{-\sigma})\text{Vol}_{g}\left (B(0,e^{-\epsilon(r)}r)\backslash B(0,r_1)\right)\\ \notag
&\le (1+Cr_1^{-\sigma})\text{Vol}_g(B(p,d))
\end{align}
where the last inequality is true since by \eqref{ER04}, $B(0,e^{-\epsilon(r)}r)\backslash B(0,r_1) \subset B(p,d)$. Hence
\begin{align} \label{ER06}
w_n(e^{-\epsilon(r)}r)^n \le (1+Cr_1^{-\sigma})\text{Vol}_g(B(p,d))+w_nr_1^n.
\end{align}

On the other hand,
\begin{align} \label{ER07}
\text{Vol}_{g}\left(B(p,d)\backslash B(p,e^{\epsilon(r_1)}r_1)\right) & \le \text{Vol}_{g}\left(B(0,e^{\epsilon(r)}r)\backslash B(0,r_1)\right)\\ \notag
&\le (1+Cr_1^{-\sigma})\text{Vol}_{g_E}\left(B(0,e^{\epsilon(r)}r)\backslash B(0,r_1)\right)\\ \notag
&= (1+Cr_1^{-\sigma})w_n\left((e^{\epsilon(r)}r)^n-r_1^n\right)
\end{align}
and hence
\begin{align} \label{ER08}
\text{Vol}_{g}(B(p,d)) \le (1+Cr_1^{-\sigma})w_n(e^{\epsilon(r)}r)^n+\text{Vol}_{g}(B(p,e^{\epsilon(r_1)}r_1)).
\end{align}

Combining \eqref{ER06} and \eqref{ER08}, we have
\begin{align} \label{ER09}
(1+Cr_1^{-\sigma})^{-1}\le \liminf_{r\to +\infty}\frac{\text{Vol}_gB(p,d(x))}{w_{n}r^n} \le \limsup_{r\to +\infty}\frac{\text{Vol}_gB(p,d(x))}{w_{n}r^n} \le 1+Cr_1^{-\sigma}
\end{align}

By taking $r_1$ to $+\infty$, we conclude that
\begin{equation}\label{ER10}
\lim_{r\to +\infty}\frac{\text{Vol}_gB(p,d(x))}{w_{n}r^n}=1, 
\end{equation}
\end{proof}

\emph{Proof of Theorem \ref{R02}}: From Lemma \ref{LR01} and \ref{LR02}, we have
\begin{align} \label{ER11}
\lim_{d(x) \to +\infty} \frac{\text{Vol}_gB(p,d(x))}{w_{n}d^n} =\lim_{r(x) \to +\infty} \frac{\text{Vol}_{g_E}B(p,r(x))}{w_{n}r^n}=1.
\end{align}
Then from a corollary of Bishop-Gromov volume comparison theorem \cite[Corollary $1.134$]{CLN06}, we conclude that $(M,g)$ is isometric to $(\mathbb R^n,g_E)$.

\section{Perelman's $\mu$-functional}
Recall that Perelman's $\mathcal W$ entropy \cite{P03} is defined as
\begin{equation} \label{E201}
\mathcal W(g,f,\tau)=\int \left ( \tau(|\nabla f|^2+R)+f-n) \right ) \frac{e^{-f}}{(4 \pi \tau)^{n/2}}\,dV
\end{equation} 
for smooth function $f$ and $\tau >0$. Let $u=e^{-f/2}$, \eqref{E201} becomes
\begin{equation} \label{E202}
\overline{\mathcal W}(g,u,\tau)=\int \left ( \tau(4|\nabla u|^2+Ru^2)-u^2\log u^2-nu^2 \right ) (4 \pi \tau)^{-n/2} \,dV
\end{equation} 
Moreover, For a general (possibly incomplete) Riemannian manifold $(M,g)$, $\mu$-functional is defined as
\begin{equation} \label{E203}
\mu(g,\tau)= \inf\left\{\overline{\mathcal W}(g,u,\tau) \mid u \in W_0^{1,2}(M) \, \, \text{and} \, \int_M u^2(4 \pi \tau)^{-n/2} \,dV=1 \right\}.
\end{equation}

Note that when $M$ is complete, $W^{1,2}(M)=W_0^{1,2}(M)$. Moreover, from the definition we have $\mu_U(g,\tau) \ge \mu_M(g,\tau)$ for any open set $U \subset M$.

We have the following monotonicity result under Ricci flow for the complete noncompact manifold,
$$
\mu(g(t_2),\tau(t_2)) \ge \mu(g(t_1),\tau(t_1))
$$
for all $0\le t_1 \le t_2 < \bar \tau$ where $\tau(t)=\bar \tau-t,\, 0<\bar \tau<T$. Here we assume that Ricci flow exists for $[0,T]$ and $|\text{Rm}|$ is uniformly bounded in spacetime. The proof of the monotonicity formula can be found in \cite[Theorem $7.1$, (ii)]{CTY11}. Although in \cite{CTY11} they have only proved the case for the conjugate heat kernel, the same proof works for all $f$ which satisfies \cite[$(3.3)$, $(3.4)$]{P03}.

It is proved in \cite{STW04}, that $\mu(g,\tau)$ is finite if $g$ has bounded geometry, that is, the curvature is bounded and the injective radius is positive.
In particular, for any AE manifold the $\mu$-functional is finite.

Moreover, it is shown in \cite{Z11} that for a manifold with bounded geometry, $\overline{\mathcal W}(g,u,1)$ has a smooth positive minimizer if $\mu(g,1)$ is less than the corresponding value at infinity. Note that by our definition of $\overline{\mathcal W}$,
$$
\overline{\mathcal W}(g,u,1)=L(g,v)-\frac{n}{2}\log{4\pi}-n
$$
where the functional $L(g,v)$ is defined in \cite[$(1.1)$]{Z11} and $v=u(4\pi)^{-n/4}$. Therefore,
\begin{equation}\label{S001}
\mu(g,1)=\lambda(M)-\frac{n}{2}\log{4\pi}-n
\end{equation}
where, see \cite[Definition $1.1$]{Z11},
$$
\lambda(M)=\inf\left\{L(v,g) \, \vert \, \int_M v^2\, dV_g=1\right\}.
$$

To be more precise, if for any sequence $p_n \to \infty$ on the manifold $M$ such that $(M,g,p_n)$ converges smoothly in the Cheeger-Gromov sense to $(M_{\infty},g_{\infty},p_{\infty})$ and $\mu_M(g,1)<\mu_{M_{\infty}}(g_{\infty},1)$, 
then $\mu_M(g,1)$ has a smooth positive minimizer.

In the case of Euclidean space, it follows from log-Sobolev inequality of L. Gross \cite{G75} that
\begin{thm}
\begin{equation}\label{E2s01}
\mathcal W(g_E,f,\tau) \ge 0
\end{equation}
for any smooth $f$ such that $\int_{\mathbb R^n}e^{-f}(4\pi\tau)^{-n/2} \,dV_{g_E}=1$.
\end{thm}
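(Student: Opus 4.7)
Since the scalar curvature of $(\mathbb R^n, g_E)$ vanishes, the functional simplifies to
$$
\mathcal W(g_E,f,\tau)=\int_{\mathbb R^n}\bigl(\tau|\nabla f|^2+f-n\bigr)\frac{e^{-f}}{(4\pi\tau)^{n/2}}\,dV_{g_E}.
$$
My plan is to rewrite this quantity in a form where the classical sharp Gaussian logarithmic Sobolev inequality applies directly, and then invoke Gross's inequality \cite{G75}.

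First I would reduce to a single value of $\tau$ by scaling. The group $(g,f,\tau)\mapsto(\lambda^2 g,f,\lambda^2\tau)$ leaves $\mathcal W$ invariant, and on $\mathbb R^n$ the map $x\mapsto\lambda x$ pulls $\lambda^2 g_E$ back to $g_E$ while transforming the measure $(4\pi\lambda^2\tau)^{-n/2}e^{-f}\,dV_{g_E}$ into the analogous measure with $\tau=1/2$. Hence it suffices to prove the inequality for $\tau=1/2$.

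Next I would set $u=e^{-f/2}$ so that the constraint $\int_{\mathbb R^n}e^{-f}(2\pi)^{-n/2}\,dV_{g_E}=1$ becomes $\int u^2\,d\mu=1$ where $d\mu=(2\pi)^{-n/2}\,dV_{g_E}$, and
$$
\mathcal W(g_E,f,1/2)=\int_{\mathbb R^n}\bigl(2|\nabla u|^2-u^2\log u^2-nu^2\bigr)\,d\mu.
$$
Now let $w(x)=u(x)\exp(|x|^2/4)$, so that $w^2\,d\gamma=u^2\,d\mu$, where $d\gamma=(2\pi)^{-n/2}e^{-|x|^2/2}\,dx$ is the standard Gaussian probability measure on $\mathbb R^n$; in particular $\int w^2\,d\gamma=1$. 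A direct calculation gives $\nabla w=(\nabla u+ux/2)e^{|x|^2/4}$, hence
$$
|\nabla w|^2\,d\gamma=\bigl(|\nabla u|^2+u\,\nabla u\cdot x+u^2|x|^2/4\bigr)d\mu,
$$
and $u^2\log u^2=w^2\log w^2-w^2\cdot|x|^2/2$. Integrating by parts the cross term $\int u\,\nabla u\cdot x\,d\mu=\tfrac12\int\nabla(u^2)\cdot x\,d\mu$ against the Gaussian weight converts everything into expressions in $w$ alone; the $|x|^2$ contributions cancel with the term $-\int u^2\log u^2\cdot\tfrac{|x|^2}{2}$ hidden in the rewrite, and after collecting constants one obtains the identity
$$
\mathcal W(g_E,f,1/2)=2\int_{\mathbb R^n}|\nabla w|^2\,d\gamma-\int_{\mathbb R^n}w^2\log w^2\,d\gamma.
$$
Gross's logarithmic Sobolev inequality applied to $w$ with $\int w^2\,d\gamma=1$ says precisely that the right hand side is nonnegative, completing the proof.

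I expect the main obstacle to be the bookkeeping in the previous paragraph: keeping track of all the boundary/integration by parts contributions and making sure the constants and the $-n\int u^2\,d\mu=-n$ term combine exactly to produce the Gross functional. (Equivalently, one can phrase the plan in the dual form: setting $\rho=e^{-f}(4\pi\tau)^{-n/2}$ so $\mathcal W=\tau\,I(\rho)+H(\rho)-(n/2)\log(4\pi\tau)-n$, and recognizing this as the relative entropy inequality $\tau I(\rho)\ge \mathrm{Ent}(\rho\,\|\,\rho_0)$ against the Gaussian $\rho_0=(4\pi\tau)^{-n/2}e^{-|x|^2/(4\tau)}$, which is Gross's inequality and attains equality exactly when $\rho=\rho_0$.) Either route bottoms out at the same sharp Gaussian log-Sobolev inequality, which supplies both the bound and the equality characterization.
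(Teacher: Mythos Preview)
Your proposal is correct: the substitution $w=u\,e^{|x|^2/4}$ and the integration by parts you outline do produce exactly the identity
\[
\mathcal W(g_E,f,1/2)=2\int_{\mathbb R^n}|\nabla w|^2\,d\gamma-\int_{\mathbb R^n}w^2\log w^2\,d\gamma,
\]
(the cross term $2\int u\,\nabla u\cdot x\,d\mu=-n\int u^2\,d\mu=-n$ absorbs the constant, and the $|x|^2$ pieces cancel), after which Gross's sharp Gaussian log-Sobolev inequality gives the result together with the equality case $u^2=e^{-|x|^2/2}$.

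As for comparison: the paper does not supply its own argument for this theorem but simply cites \cite[Lemma~8.17]{PT06}. Your route---scaling to $\tau=1/2$, passing to $u=e^{-f/2}$, then conjugating by the Gaussian to reach the Ornstein--Uhlenbeck Dirichlet form---is exactly the standard derivation found in that reference, so there is no substantive difference in approach.
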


The proof can be found in \cite[Lemma $8.17$]{PT06}.

It is immediate from \eqref{E2s01} that $\overline{\mathcal W}(g_E,u,\tau) \ge 0$ where equality holds if $u^2=e^{-\frac{|x|^2}{4\tau}}$. Therefore, $\mu_{\mathbb R^n}(g_E, \tau)=0$.
For an AE manifold $M^n$, we have $(M,g,p_n) \overset{C^{\infty}}{\longrightarrow} (\mathbb R^n, g_E, p_{\infty})$ for any sequence $p_n \to \infty$ by Cheeger-Gromov compactness theorem. Therefore $\overline {\mathcal W}(g,u,\tau)$ has a smooth positive minimizer if $\mu(g,\tau)=\mu(\tau^{-1}g,1)<0$ from the above result. Note that $\tau^{-1}g$ is still an AE metric.

We have the following lemma.

\begin{lem} \label{L4001}
Assume that $(M_i,g_i)$ converges to $(M_{\infty},g_{\infty})$ smoothly in the Cheeger-Gromov sense and  $\mu(g_{\infty},\tau)$ is finite, then
$$
\mu(g_{\infty},\tau) \ge \limsup_{i \to \infty} \mu(g_i,\tau).
$$
\end{lem}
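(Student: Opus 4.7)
\medskip

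The plan is to exploit the variational characterization of $\mu$ and push a near-minimizer on the limit $M_\infty$ back to the approximating manifolds $M_i$ via the Cheeger--Gromov diffeomorphisms. Since $\mu$ is an infimum, producing one admissible test function on each $M_i$ with a controlled value is enough to bound $\mu(g_i,\tau)$ from above, which is exactly the direction we need.

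More concretely, fix $\epsilon>0$. First I would approximate: by the definition of $\mu(g_\infty,\tau)$ and density of $C_c^\infty(M_\infty)$ in $W_0^{1,2}(M_\infty)$, there exists $u\in C_c^\infty(M_\infty)$ with $\int_{M_\infty} u^2 (4\pi\tau)^{-n/2}\,dV_{g_\infty}=1$ and $\overline{\mathcal W}(g_\infty,u,\tau)\le \mu(g_\infty,\tau)+\epsilon$. This reduction to a compactly supported smooth test function is the step that requires some care: one must justify that the four pieces of the integrand—$|\nabla u|^2$, $Ru^2$, $u^2\log u^2$, and $u^2$—are each continuous with respect to the $W^{1,2}$ norm on compact support (the only subtle piece is $u^2\log u^2$, which is fine because Sobolev embedding controls $u$ in every $L^p$ on a fixed compact set). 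Using the Cheeger--Gromov data, let $U_i\subset M_\infty$ be the exhaustion by precompact open sets and $\phi_i\colon U_i\to V_i\subset M_i$ the diffeomorphisms with $\phi_i^* g_i\to g_\infty$ smoothly on compact sets. For $i$ large, $\supp u\Subset U_i$, so we may define $u_i:=u\circ \phi_i^{-1}$ on $V_i$ and extend by zero on $M_i$; this gives $u_i\in C_c^\infty(M_i)\subset W_0^{1,2}(M_i)$.

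Next I would renormalize and pass to the limit. Set $a_i:=\int_{M_i} u_i^2(4\pi\tau)^{-n/2}\,dV_{g_i}=\int_{U_i}(u\circ\phi_i^{-1})^2(4\pi\tau)^{-n/2}\,dV_{g_i}$, and observe that by the smooth convergence $\phi_i^* g_i\to g_\infty$ on $\supp u$, we have $a_i\to 1$. Set $\tilde u_i:=a_i^{-1/2}u_i$, which is now an admissible competitor for $\mu(g_i,\tau)$. The smooth convergence of $\phi_i^* g_i$ to $g_\infty$ on $\supp u$ implies smooth convergence of $\phi_i^* R(g_i)$ to $R(g_\infty)$ and of the gradients and volume forms, so a direct computation shows
\begin{equation*}
\overline{\mathcal W}(g_i,\tilde u_i,\tau)\longrightarrow \overline{\mathcal W}(g_\infty,u,\tau)\qquad\text{as }i\to\infty.
\end{equation*}
Therefore $\mu(g_i,\tau)\le \overline{\mathcal W}(g_i,\tilde u_i,\tau)$ and taking $\limsup$ gives
\begin{equation*}
\limsup_{i\to\infty}\mu(g_i,\tau)\le \overline{\mathcal W}(g_\infty,u,\tau)\le \mu(g_\infty,\tau)+\epsilon.
\end{equation*}
Letting $\epsilon\to 0$ yields the lemma.

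The only genuine obstacle I anticipate is the technical step of approximating an arbitrary $W_0^{1,2}$ near-minimizer by a smooth compactly supported function while keeping $\overline{\mathcal W}$ close; this is where one must handle the $u^2\log u^2$ term and the unboundedness of $R$ on a noncompact manifold. Once $u$ is fixed with compact support, everything else is a straightforward consequence of smooth convergence on compact sets.
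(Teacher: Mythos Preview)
Your proposal is correct and follows the same approach as the paper: pick a near-minimizer $u$ on $M_\infty$, transplant it to $M_i$ via the Cheeger--Gromov diffeomorphisms, and use smooth convergence on compact sets to conclude. You are simply more careful than the paper about the approximation by $C_c^\infty$ functions and the renormalization of the transplanted test function, details which the paper leaves implicit.
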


\begin{proof}
For any $\epsilon>0$, we can find a $u \in W^{1,2}_0(M_{\infty})$ such that $\overline{\mathcal W}(g_{\infty},u,\tau) \le \mu(g_{\infty},\tau)+\epsilon$. For large $i$, we can find $u_i \in W^{1,2}_0(M_i)$ which are the pull-back functions of $u$ and $\lim_{i \to \infty}\overline{\mathcal W}(g_i,u_i,\tau)=\overline{\mathcal W}(g_{\infty},u,\tau)$ by the convergence.\\
Therefore we have
$$
\limsup_{i \to \infty}\mu(g_i,\tau) \le \lim_{i \to \infty}\overline{\mathcal W}(g_i,u_i,\tau) \le \mu(g_{\infty},\tau)+\epsilon.
$$
Since the above holds for any $\epsilon>0$, we have $\limsup_{i \to \infty} \mu(g_i,\tau) \le \mu(g_{\infty},\tau)$.
\end{proof}

It follows immediately from the above lemma that $\mu(g,\tau) \le 0$ for any AE manifold since  $(M,g,p_n) \overset{C^{\infty}}{\longrightarrow} (\mathbb R^n, g_E, p_{\infty})$ for any $p_n \to \infty$.

The Euler-Lagrange equation for the minimizer of $\mu(g,\tau)$ is 
\begin{equation} \label{E204}
\tau(-4\Delta u+Ru)-u \log {u^2}-nu=\mu (g,\tau)u.
\end{equation}

For the general Ricci flow on the noncompact manifold we have the following result and the proof is almost identical with the compact case, see \cite[Section $3.1$]{P03},
\begin{thm}\label{T201}
If $(M^n,g)$ is a manifold with bounded geometry such that a solution $g(t)$ of bounded curvature to the Ricci flow with $g(0)=g$ exists for $t \in [0,T)$, then for any $\bar{\tau} \in (0,T)$, $\mu(g,\bar{\tau})<0$ unless $(M^n,g)$ is isometric to $(\mathbb R^n,g_{E})$.
\end{thm}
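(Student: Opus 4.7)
The plan is to adapt Perelman's compact-case argument \cite[Section~3.1]{P03} to the noncompact setting, replacing the constant test function with one constructed from the conjugate heat kernel. Fix $x_0 \in M$ and let $K(x,t;x_0,\bar\tau)$ be the conjugate heat kernel based at $(x_0,\bar\tau)$; this exists globally since curvature is bounded in spacetime. Writing
$$K(x,t;x_0,\bar\tau) = (4\pi(\bar\tau - t))^{-n/2} e^{-f(x,t)},$$
the function $u(x,t) = e^{-f(x,t)/2}$ is admissible in the variational problem defining $\mu(g(t),\bar\tau - t)$ for each $t$, and $f$ satisfies conditions (3.3)--(3.4) of \cite{P03}, so the monotonicity formula cited above from \cite[Theorem~7.1]{CTY11} applies to this particular choice.

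The first step is to establish $\mu(g,\bar\tau) \le 0$. By monotonicity, $\overline{\mathcal{W}}(g(t),u(t),\bar\tau - t)$ is non-decreasing on $[0,\bar\tau)$. A standard computation using the small-time concentration of the conjugate heat kernel at $(x_0,\bar\tau)$ and the fact that on that scale the geometry rescales to Euclidean yields
$$\lim_{t\to \bar\tau^-}\overline{\mathcal{W}}(g(t),u(t),\bar\tau - t) = 0,$$
which is the Euclidean value attained at the Gaussian fundamental solution. Combined with monotonicity we obtain
$$\mu(g,\bar\tau) \le \overline{\mathcal{W}}(g,u(0),\bar\tau) \le 0.$$

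If strict inequality holds we are done, so suppose $\mu(g,\bar\tau) = 0$. Then equality must hold throughout the chain above, so $\overline{\mathcal{W}}(g(t),u(t),\bar\tau - t) \equiv 0$ on $[0,\bar\tau)$ and pointwise equality in the monotonicity formula is attained. The pointwise equality case of the $\mathcal{W}$-monotonicity is precisely the gradient shrinking soliton equation
$$Rc(g(t)) + \nabla^2 f(t) = \frac{g(t)}{2(\bar\tau - t)}, \qquad (x,t) \in M \times [0,\bar\tau).$$

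The remaining step, and the main obstacle, is to rigidify this soliton to conclude $(M,g) = (\mathbb{R}^n,g_E)$. The crucial input beyond the soliton equation itself is that $f$ arises from the conjugate heat kernel at $(x_0,\bar\tau)$, so $f(\cdot,t)$ has the Gaussian profile $f(x,t) \sim d_{g(t)}(x,x_0)^2/(4(\bar\tau - t))$ as $t \to \bar\tau^-$ and in particular admits a unique nondegenerate minimum at $x_0$. Using this concentration together with the soliton equation, the bounded-curvature hypothesis, and $R \ge 0$ (propagated along the flow as noted earlier), I would argue that $Rc \equiv 0$ on the soliton. The soliton equation then reduces to $\nabla^2 f = g/(2(\bar\tau - t))$, and the classical Tashiro-type rigidity for complete Riemannian manifolds admitting a smooth function whose Hessian is a positive multiple of the metric identifies $(M,g)$ with $(\mathbb{R}^n, g_E)$. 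Ruling out nontrivial shrinkers (for instance cylindrical ones) purely from the soliton equation is false in general, so the argument really must exploit the Gaussian concentration of the particular $f$ at hand.
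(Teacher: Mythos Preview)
Your setup through the soliton equation matches the paper exactly: conjugate heat kernel based at a point, monotonicity of $\mathcal{W}$ along that solution, the limit $0$ as $t\to\bar\tau^-$, and in the equality case the shrinking soliton identity $Rc + \nabla^2 f = g/(2(\bar\tau - t))$.

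The gap is in your rigidity step. You propose to combine Gaussian concentration of $f$, bounded curvature, and $R\ge 0$ to force $Rc\equiv 0$, but you do not actually carry this out, and you yourself flag that the soliton equation alone cannot exclude cylinders. Two remarks. First, $R\ge 0$ is not among the hypotheses of this theorem, so you cannot invoke it. Second, and more importantly, the paper's argument here is much simpler and uses only what is given: once $g(t)$ is a gradient shrinker with singular time $\bar\tau$, self-similarity gives
\[
\tau(t)\,\max_M|\mathrm{Rm}(g(t))| \equiv \text{const}
\]
on $[0,\bar\tau)$. But by hypothesis the flow exists with bounded curvature on $[0,T)$ with $\bar\tau < T$, so $\max_M|\mathrm{Rm}(g(t))|$ stays bounded as $t\to\bar\tau^-$ while $\tau(t)\to 0$; hence the constant is $0$ and $\mathrm{Rm}\equiv 0$. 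With $g$ Ricci-flat, the soliton equation reduces to $\nabla^2 f = g/(2\bar\tau)$, and then (as you say) one gets $(M,g)=(\mathbb{R}^n,g_E)$ via $\Delta d^2 = 2n$ and Bishop--Gromov. The point you missed is that the strict inequality $\bar\tau < T$ is exactly what kills the nontrivial shrinkers, with no appeal to the fine structure of $f$.
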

\begin{proof}
Let $\tau(t)=\bar{\tau}-t$, $y \in M$ and consider the corresponding fundamental solution
\begin{equation} \label{E205}
v(x,t)=(4 \pi \tau(t))^{-n/2}e^{-f(x,t)},\qquad t \in [0,\bar{\tau})
\end{equation}
to the adjoint heat equation 
$$
\frac{\partial v}{\partial t}=-\Delta v+R v
$$
with $\lim_{t \nearrow \bar{\tau}}v(\cdot,t)=\delta_y$.

The existence of the fundamental solutions to the adjoint heat equation on noncompact manifolds and its basic properties can be found in \cite[Chapter $24$, $25$]{CCGGIIKLLN10a}.

Then by the monotonicity of the entropy,
\begin{equation} \label{E206}
\mu(g,\bar{\tau})=\mu(g,\tau(0)) \le \mathcal W(g(0),f(0),\tau(0)) \le \limsup_{t \nearrow \bar{\tau}} \mathcal W(g(t),f(t),\tau(t))\le 0
\end{equation}
where the proof of the last limit in \eqref{E206} can be found in \cite[Theorem $7.1$]{CTY11}.
If $\mu(g,\bar{\tau})=0$, $\mathcal W(g(t),f(t),\tau(t))=0$ since it is monotone. Therefore from the formula
\begin{equation} \label{mono}
\frac{d \mathcal W(g(t),f(t),\tau(t))}{dt}=2\tau \int_M \left|Rc+\nabla^2f-\frac{g}{2\tau}\right|^2 \frac{e^{-f}}{(4 \pi \tau)^{n/2}}\,dV
\end{equation}
we have 
\begin{equation}\label{E207}
Rc+\nabla^2f-\frac{g}{2\tau} \equiv 0
\end{equation}
for $t \in [0,\bar {\tau}]$, so $g(t)$ is a shrinking soliton with singular time $\bar {\tau}$. From 
$$
\tau(t)\underset{M}{\text{max}}|\text{Rm}(g(t))| \equiv \text{const}
$$ 
for $t \in [0,\bar {\tau}]$, we conclude that $|\text{Rm}(g(t))|\equiv 0$.
In particular $g$ is Ricci-flat and we have from \eqref{E207}
\begin{equation}\label{E208}
\nabla^2f-\frac{g}{2\bar{\tau}} \equiv 0.
\end{equation}
Set $\bar{f}=4\bar{\tau}f$, then $\nabla^2\bar{f}=2{g}$ and hence $\bar{f}$ is a convex function.

Let $O$ be a fixed point, then for any point $x \in M$ we have a minimizing geodesic $s(t) ,\, 0 \le t \le d(x,O)$ such that $|\dot{s}(t)|=1$.
Then we have
\begin{align}\label{E208a}
\frac{d^2\bar{f}(s(t))}{dt^2} &= \nabla^2\bar{f}(\nabla d, \nabla d)=2g(\nabla d, \nabla d)=2.
\end{align}
Therefore,
\begin{align} \label{S202}
 \frac{d\bar{f}(s(t))}{dt} =\langle \nabla \bar{f}, \nabla d \rangle=2t+\langle \nabla \bar{f}, \nabla d \rangle_{t=0}
\end{align}
From \eqref{S202} we have $\bar{f}(s(t))=\bar{f}(O)+t\langle \nabla \bar{f}, \nabla d \rangle_{t=0}+t^2$. In other words, $\bar{f}$ is quadratically increasing and therefore it has a minimal point $O_1$. By choosing $O=O_1$, we have $\bar{f}(x)=\bar{f}(O_1)+d^2(x,O_1)$.  In particular, by taking trace of \eqref{E208} we have 
$$
\Delta d^2=2n.
$$
Therefore $(M^n,g)$ is isometric to $(\mathbb R^n,g_{E})$ by Bishop-Gromov comparison theorem \cite[Theorem $1.128$, $1.132$]{CLN06} since $g$ is Ricci-flat.
\end{proof}

Now we have the following crucial result.  

\begin{thm}\label{T202}
If $(M^n,g)$ is an AE manifold such that the scalar curvature $R > 0$, then $\lim_{\tau \to \infty}\mu(g,\tau)=0$.
\end{thm}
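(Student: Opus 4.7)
The upper bound $\mu(g,\tau)\le 0$ has already been observed as a consequence of Lemma \ref{L4001} and the smooth Cheeger-Gromov convergence $(M,g,p_n)\overset{C^\infty}{\longrightarrow}(\R^n,g_E,p_\infty)$ for any $p_n\to\infty$, so it remains to prove $\liminf_{\tau\to\infty}\mu(g,\tau)\ge 0$. I argue by contradiction: assume there exist $\tau_i\to\infty$ and $\delta>0$ with $\mu(g,\tau_i)\le-\delta$. By the scale invariance $\mu(g,\tau)=\mu(\tau^{-1}g,1)$, set $\tilde g_i:=\tau_i^{-1}g$; then $\mu(\tilde g_i,1)\le-\delta$ and, in rescaled AE coordinates $y=x/\sqrt{\tau_i}$, the metric reads $\tilde g_i=\delta_{ij}+O(\tau_i^{-\sigma/2}|y|^{-\sigma})$, so $(M,\tilde g_i,p)$ converges smoothly in the pointed Cheeger-Gromov sense to $(\R^n,g_E,0)$.

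Since $\mu(\tilde g_i,1)<0=\mu_{\R^n}(g_E,1)$, the result of \cite{Z11} recalled above furnishes a smooth positive minimizer $\phi_i$ of $\mu(\tilde g_i,1)$ satisfying \eqref{E204} at $\tau=1$ (with $\tilde R_i:=\tau_i R$ in place of $R$) together with the normalization $\int_M\phi_i^2(4\pi)^{-n/2}\,dV_{\tilde g_i}=1$. The integrability of $R$ gives $\int_M\tilde R_i\,dV_{\tilde g_i}=\tau_i^{1-n/2}\int_M R\,dV_g\to 0$, so the scalar curvature term drops out in the limit. Evaluating the Euler-Lagrange equation at a maximum point of $\phi_i$ yields $\log\phi_i^2\le\tilde R_i+|\mu_i|-n$, which together with the local smallness of $\tilde R_i$ gives local $L^\infty$ bounds and, via elliptic bootstrap, local $C^2$ bounds on $\phi_i$. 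I would then apply the concentration-compactness principle to the probability densities $\phi_i^2(4\pi)^{-n/2}\,dV_{\tilde g_i}$, centralizing the $L^2$ mass by exploiting the approximate translation invariance of $\tilde g_i$ at scales bounded in $\tilde g_i$ (which correspond to scales $O(\sqrt{\tau_i})$ in $g$), so as to extract a subsequential limit $\phi_\infty\ge 0$ on $(\R^n,g_E)$ with $c^2:=\int_{\R^n}\phi_\infty^2(4\pi)^{-n/2}\,dV_{g_E}\in(0,1]$.

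Lower semi-continuity of $\overline{\mathcal W}$ along this smooth convergence (using $\tilde R_i\to 0$ locally and the $C^2$ bounds on $\phi_i$) then gives
\[
\overline{\mathcal W}(g_E,\phi_\infty,1)\le\liminf_{i\to\infty}\overline{\mathcal W}(\tilde g_i,\phi_i,1)=\liminf_{i\to\infty}\mu(\tilde g_i,1)\le-\delta.
\]
On the other hand, writing $\phi_\infty=c\psi$ with $\int\psi^2(4\pi)^{-n/2}\,dV_{g_E}=1$ and applying \eqref{E2s01} to $\psi$, the scaling identity $\overline{\mathcal W}(g_E,c\psi,1)=c^2\overline{\mathcal W}(g_E,\psi,1)-c^2\log c^2$ yields $\overline{\mathcal W}(g_E,\phi_\infty,1)\ge -c^2\log c^2\ge 0$ since $c\le 1$, the desired contradiction. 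The main technical hurdle is the extraction step: because the limit space $\R^n$ is noncompact, one must exclude both vanishing (all $L^2$ mass escaping in $\tilde g_i$-distance) and dichotomy (splitting of $\phi_i^2$ into two $L^2$-nontrivial pieces); this I would handle by combining the near-Euclidean log-Sobolev inequality on large balls in the AE end of $(M,\tilde g_i)$ (where $\tilde g_i\approx g_E$) with the strict subadditivity of the log-Sobolev minimization.
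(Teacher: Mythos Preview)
Your overall strategy---rescale, extract a limit of minimizers on $\R^n$, contradict the Euclidean log-Sobolev via the scaling identity $\overline{\mathcal W}(g_E,c\psi,1)=c^2\overline{\mathcal W}(g_E,\psi,1)-c^2\log c^2$---is exactly the paper's. But there is a genuine gap in your derivation of the $L^\infty$ bound.

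The maximum-point computation has the wrong sign. At a maximum of $\phi_i$ one has $\Delta\phi_i\le 0$, so the Euler--Lagrange equation $-4\Delta\phi_i+\tilde R_i\phi_i-\phi_i\log\phi_i^2-n\phi_i=\mu_i\phi_i$ yields
\[
\log\phi_i^2\;\ge\;\tilde R_i-n-\mu_i\;=\;\tilde R_i-n+|\mu_i|,
\]
a \emph{lower} bound on $\max\phi_i$, not the upper bound you wrote. So this step gives you neither local $L^\infty$ control nor a bound on $|\mu_i|$; in fact you have not excluded $\mu_i\to-\infty$, and your subsequent elliptic bootstrap and extraction do not get off the ground.

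The paper closes this gap with a Moser iteration that produces a uniform $L^\infty$ bound on the minimizers independent of $\tau_k$. The iteration rests on a Sobolev inequality $\|u\|_{2n/(n-2)}^2\le C\int(4|\nabla u|^2+Ru^2)\,dV$: on the AE end this is the Euclidean Sobolev inequality, but on the compact core one must absorb the zeroth-order term, and this is precisely where the hypothesis $R>0$ is used (and nowhere else substantively). Once the uniform upper bound is in place, the corrected maximum-point inequality above forces $\mu_k$ to be bounded below, ruling out $\mu_\infty=-\infty$.

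Two further differences worth noting. After rescaling, the compact core of $M$ is crushed to the origin, so the limit $u_\infty$ is a priori only defined on $\R^n\setminus\{0\}$; the paper proves a removable-singularity lemma to place $u_\infty$ in $W^{1,2}(\R^n)$ before invoking the Euclidean log-Sobolev. And rather than a concentration-compactness black box, the paper handles your ``vanishing'' alternative by a direct cut-off argument: if $\tilde u_k\to 0$ locally, one multiplies by a cutoff supported in the end, renormalizes, and shows the resulting test function has strictly negative $\overline{\mathcal W}$ on a region where the integrand is manifestly nonnegative.
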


\begin{proof}
If the conclusion does not hold, we can find a sequence $\tau_k \to +\infty$ and $\lim_{k \to \infty}\mu(g,\tau_k)=\mu_{\infty}$, so that $\mu_{\infty}$ is either a finite negative number or $\mu_{\infty}=-\infty$.

We have previously shown that $\mu(g,\tau_k)$ has a positive minimizer $u_k$ and it satisfies
\begin{equation} \label{E211}
\tau_k(-4\Delta u_k+Ru_k)-u_k \log {u_k^2}-nu_k=\mu (g,\tau_k)u_k 
\end{equation}
and 
\begin{equation} \label{E212}
\int_M u_k^2(4 \pi \tau_k)^{-n/2} \, dV=1 .
\end{equation}

\noindent \emph{Claim 1}.
$u_k$ are uniformly bounded.

We first prove a lemma.
\begin{lem}\label{L201}
For $u \in W^{1,2}(M)$, the following Sobolev inequality holds
\begin{equation}\label{E213}
\left( \int_M u^{\frac{2n}{n-2}} \, dV \right )^{\frac{n-2}{n}} \le C\int_M \left(4|\nabla u|^2+Ru^2\right) \, dV
\end{equation}
where the constant $C$ depends on the dimension, curvature bound, injective radius lower bound, AE coordinate system and infinimum of $R$ on a compact set.
\end{lem}

\begin{proof}
Let $M^n=K \bigsqcup E$ be the disjoint union of a compact set $K$ and AE end $E$ and $K_1$ a compact set such that $K \subset \subset K_1$.
We choose a cutoff function $\phi_0$ supported on $K_1$ and $\phi_0=1$ on $K$. Let $\phi_1=1-\phi_0$. 

For any $u \in W^{1,2}(M)$, we have
$$
\left\|u\right\|_{\frac{2n}{n-2}}=\left\|\phi_0 u+\phi_1u\right\|_{\frac{2n}{n-2}} \le \|\phi_0 u\|_{\frac{2n}{n-2}}+\|\phi_1 u\|_{\frac{2n}{n-2}}.
$$

By the $L^2$ Sobolev inequality on manifold with bounded geometry, see \cite[Theorem $2.21$]{Au10},
\begin{align} 
\left( \int_M \left(\phi_0 u\right)^{\frac{2n}{n-2}} \, dV \right )^{\frac{n-2}{n}} &\le
C  \int_M \left(|\nabla \left(\phi_0 u\right)|^2+\phi^2_0 u^2 \right)\, dV   \notag \\
& \le C  \int_{K_1} \left(|\nabla \phi_0 u|^2+|\phi_0 \nabla u|^2+\phi_0^2 u^2\right) \, dV \notag \\
& \le C \int_{K_1} \left(|\nabla u|^2+u^2 \right)\, dV \notag \\
& \le C \int_{K_1}\left(4|\nabla u|^2+Ru^2\right) \, dV.  \label{E214}
\end{align} 
The last inequality holds since we assume $R>0$.

On the $AE$ end $E$, by enlarging $K$ and $K_1$ if necessary, we can assume the $L^2$ Sobolev inequality of the Euclidean type holds.
To be precise, on $\mathbb R^n$ we have the $L^2$ Sobolev inequality \cite{A03}:
\begin{equation} \label{E215}
\left( \int_{\mathbb R^n}  u^{\frac{2n}{n-2}} \, dV_{g_E} \right)^{\frac{n-2}{n}}
\le C \int_{\mathbb R^n} |\nabla_{g_E} u|^2 \, dV_{g_E}
\end{equation}
for any $u \in C_0^{1}(\mathbb R^n)$ and some constant $C>0$ depending only on dimension.

Since $E$ is the AE end, by shrinking it if necessary, we can assume that there exists a $C>0$ such that
\begin{align}
C^{-1}dV_{g_E} &\le dV \le CdV_{g_E} \notag \\
C^{-1}|\nabla_{g_E} u|^2 &\le   |\nabla u|^2 \le C|\nabla_{g_E} u|^2. \notag 
\end{align}
Hence, for any $u \in C_0^1(E)$
\begin{align}
\left( \int_{E}  u^{\frac{2n}{n-2}} \, dV \right)^{\frac{n-2}{n}}&\le \left(C \int_{\mathbb R^n}  u^{\frac{2n}{n-2}} \, dV_{g_E} \right)^{\frac{n-2}{n}} \notag \\
&\le  C \int_{\mathbb R^n} |\nabla_{g_E} u|^2 \, dV_{g_E} \le C\int_{\mathbb R^n} |\nabla u|^2 \, dV_{g_E} \\ \notag
& \le C \int_{\mathbb R^n} |\nabla u|^2 \, dV\le C \int_{E} |\nabla u|^2 \, dV.
\end{align}

So we have
\begin{align} \label{E217}
\left( \int_M (\phi_1 u)^{\frac{2n}{n-2}} \, dV \right )^{\frac{n-2}{n}} &\le
C  \int_M |\nabla (\phi_1 u)|^2 \, dV   \notag \\
& \le C  \int_M \left(|\nabla \phi_1 u|^2+|\phi_1 \nabla u|^2\right) \, dV  \notag \\
& \le C  \int_M |\nabla u|^2 \, dV +C\int_{K_1}u^2 \, dV \notag \\
& \le C \int_M \left(4|\nabla u|^2+Ru^2 \right)\, dV. 
\end{align} 
Combining \eqref{E214} and \eqref{E217}, \eqref{E213} holds.
\end{proof}

We can now prove the claim by using the Moser iteration. This is known to experts but we write it down for the convenience of readers. For the sake of simplicity, we will not write down the subscript $k$ explicitly throughout and set $\mu=\mu(g,\tau_k)$.

\emph{Proof of Claim 1}, see also \cite[Lemma $2.1$]{Z11}.
From \eqref{E211} we have
$$
4\Delta u-Ru+\frac{2}{\tau}u\log u+\frac{n+\mu}{\tau}u = 0.
$$
Since $\mu \le 0$, we have
\begin{equation}\label{E218}
4\Delta u-Ru+\frac{2}{\tau}u\log u+\frac{n}{\tau}u \ge 0.
\end{equation}
By a direct computation, for $p \ge 1$
\begin{align} \label{E219}
4\Delta u^p &=4p(p-1)u^{p-2}|\nabla u|^2+4pu^{p-1}\Delta u \ge 4pu^{p-1}\Delta u \notag \\
&\ge -\frac{2p}{\tau}u^p \log u-\frac{np}{\tau}u^p+pRu^p.
\end{align}
We set $w=u^p$ and $\phi$ to be a test function. From \eqref{E219} we have 
$$
4\int \langle \nabla(w \phi^2), \nabla w \rangle \, dV \le \frac{2p}{\tau} \int w^2 \phi^2 \log u \, dV+ \frac{np}{\tau} \int w^2\phi ^2 \,dV-\int pRw^2\phi^2 \, dV.
$$
On the other hand, since
$$
\langle \nabla(w \phi^2), \nabla w \rangle=|\nabla(w\phi)|^2-|\nabla \phi|^2w^2
$$
we have
\begin{equation} \label{E220}
4\int |\nabla(w\phi)|^2 \, dV \le 4\int |\nabla \phi|^2w^2 \,d V+ \frac{2p}{\tau} \int w^2 \phi^2 \log u \, dV+ \frac{np}{\tau} \int w^2\phi ^2 \,dV-\int pRw^2\phi^2 \, dV.
\end{equation}
There is a constant $c_1 >0$ such that
$$
\log u \le u^{\frac{2}{n}}+c_1.
$$
Hence
\begin{align}\label{E221}
\frac{2p}{\tau} \int w^2 \phi^2 \log u \, dV & \le \frac{2p}{\tau} \int w^2 \phi^2 u^{\frac{2}{n}} \, dV+\frac{2c_1p}{\tau} \int w^2 \phi^2 \, dV \notag \\
&\le  \frac{2p}{\tau} \left( \int(w\phi)^{\frac{2n}{n-1}} \, dV \right)^{\frac{n-1}{n}} \left(\int u^2 \,dV \right)^{\frac{1}{n}}+\frac{2c_1p}{\tau}\int w^2 \phi^2 \, dV \notag \\
&= \frac{\sqrt{4\pi}2p}{\sqrt{\tau}}\left( \int(w\phi)^{\frac{2n}{n-1}} \, dV \right)^{\frac{n-1}{n}}+\frac{2c_1p}{\tau}\int w^2 \phi^2 \, dV
\end{align}
since \eqref{E212} holds.

From H\"older's inequality $\|fh\|_1\le \|f\|_p\|h\|_q$ by choosing $f=h=(w\phi)^{\frac{n}{n-1}}$, $p=\frac{2(n-1)}{n-2}$ and $q=\frac{2(n-1)}{n}$, we have
\begin{align}\label{E222}
\left( \int(w\phi)^{\frac{2n}{n-1}} \, dV \right)^{\frac{n-1}{n}} &\le \left( \int(w\phi)^{\frac{2n}{n-2}} \, dV \right)^{\frac{n-2}{2n}}\left(\int w^2 \phi^2 \, dV \right)^{\frac{1}{2}} \notag \\
&\le \lambda \left( \int(w\phi)^{\frac{2n}{n-2}} \, dV \right)^{\frac{n-2}{n}}+\frac{1}{4\lambda} \int w^2 \phi^2 \, dV,
\end{align}
where the last line is from Young's inequality for a positive $\lambda$ to be determined below.

So from \eqref{E221},
\begin{align} \label{E223}
\frac{2p}{\tau} \int w^2 \phi^2 \log u \, dV &\le \frac{c_2 \lambda p}{\sqrt{\tau}}\left( \int(w\phi)^{\frac{2n}{n-2}} \, dV \right)^{\frac{n-2}{n}} \notag \\
&+\frac{c_2p}{4\lambda \sqrt{\tau}}\int w^2 \phi^2 \, dV+\frac{2c_1p}{\tau}\int w^2 \phi^2 \, dV
\end{align}
where $c_2=2\sqrt{4\pi}$.

From lemma \eqref{L201}, \eqref{E220} \eqref{E223}, we have
\begin{align} \label{E224}
\frac{1}{C}\left( \int (w\phi)^{\frac{2n}{n-2}} \, dV \right )^{\frac{n-2}{n}} \le & \int (4|\nabla (w\phi)|^2+R(w\phi)^2 )\, dV \notag \\
\le & 4\int |\nabla \phi|^2w^2 \,d V+ \frac{2p}{\tau} \int w^2 \phi^2 \log u \, dV+ \frac{np}{\tau} \int w^2\phi ^2 \,dV \notag \\
\le & 4\int |\nabla \phi|^2w^2 \,d V+ \frac{c_2 \lambda p}{\sqrt{\tau}}\left( \int(w\phi)^{\frac{2n}{n-2}} \, dV \right)^{\frac{n-2}{n}} \notag \\
&+\frac{c_2p}{4\lambda \sqrt{\tau}}\int w^2 \phi^2 \, dV+\frac{2c_1p}{\tau}\int w^2 \phi^2 \, dV \notag \\
&+ \frac{np}{\tau} \int w^2\phi ^2 \,dV.
\end{align}

If we choose $\lambda$ satisfies $\dfrac{c_2 \lambda p}{\sqrt{\tau}}=\dfrac{1}{2C}$, that is, $\lambda =\dfrac{\sqrt{\tau}}{2Cc_2p}$, then from \eqref{E224},
there exists a $C_0>0$ such that 
\begin{equation}\label{E225}
\left( \int (w\phi)^{\frac{2n}{n-2}} \, dV \right )^{\frac{n-2}{n}} \le C_0 \int |\nabla \phi|^2w^2 \,d V+\frac{C_0 p^2}{\tau}\int w^2 \phi^2 \, dV.
\end{equation}

For any point $x$ on $M$, we choose $\phi_k$ such that it is supported on $B\left(x,\sqrt{\tau}(1+1/2^k) \right)$ and $\phi_k=1$ on $B\left(x,\sqrt{\tau}(1+1/2^{k+1}) \right)$ 
such that $|\nabla \phi_k| \le \dfrac{C2^k}{\sqrt{\tau}}$.

From \eqref{E225} we have

\begin{align} \label{E226}
\left( \int_{B(x,\sqrt{\tau}(1+1/2^{k+1}))} w^{\frac{2n}{n-2}} \, dV \right )^{\frac{n-2}{n}}  \le & \left( \int (w\phi_k)^{\frac{2n}{n-2}} \, dV \right )^{\frac{n-2}{n}} \notag \\
\le & C_0 \int |\nabla \phi_k|^2w^2 \,d V+\frac{C_0 p^2}{\tau}\int w^2 \phi_k^2 \, dV \notag \\
\le & \frac{C_1 2^{2k} p^2}{\tau} \int_{B(x,\sqrt{\tau}(1+1/2^k))} w^2 \, dV.
\end{align}

If we set $p_0=\dfrac{n}{n-2}$ and choose $p=p_0^k$, from \eqref{E226} we have
\begin{align} 
\left( \int_{B(x,\sqrt{\tau}(1+1/2^{k+1}))} u^{2p_0^{k+1}} \, dV \right )^{\frac{n-2}{n}} \le \frac{C_1(2p_0)^{2k}}{\tau} \int_{B(x,\sqrt{\tau}(1+1/2^k))} u^{2p_0^k} \, dV,
\end{align}
or equivalently,

\begin{align} \label{E227}
\left( \int_{B(x,\sqrt{\tau}(1+1/2^{k+1}))} u^{2p_0^{k+1}} \, dV \right )^{\frac{1}{p_0^{k+1}}} \le \frac{C_1^{\frac{1}{p_0^{k}}}(2p_0)^{\frac{2k}{p_0^{k}}}}{\tau^{\frac{1}{p_0^{k}}}} \left( \int_{B(x,\sqrt{\tau}(1+1/2
^k))} u^{2p_0^{k}} \, dV \right )^{\frac{1}{p_0^{k}}}.
\end{align}

Let $k=0,1,\ldots,$ and by iteration,
\begin{equation}\label{E228}
\max_{B(x,\sqrt{\tau})}u^2 \le \frac{C_1^{\sum_{k\ge 0}\frac{1}{p_0^{k}}}p_0^{\sum_{k \ge 0}\frac{2k}{p_0^{k}}}}{\tau^{\sum_{k \ge 0}\frac{1}{p_0^{k}}}} \left( \int_{B(x,2\sqrt{\tau})} u^2 \, dV \right )
\le \frac{C_2}{\tau^{\frac{n}{2}}} \left( \int_{B(x,2\sqrt{\tau})} u^2 \, dV \right ) 
\end{equation}
since $\sum_{k \ge 0}\frac{1}{p_0^{k}}=\frac{n}{2}$ and $\sum_{k \ge 0}\frac{2k}{p_0^{k}}$ converges.
As 
$$
\int_{B(x,2\sqrt{\tau})} u^2 \, dV  \le \int_M u^2 \, dV=(4 \pi \tau)^{\frac{n}{2}},
$$
we conclude from \eqref{E228} that 
$$
\max_M u^2 \le C_3
$$
for some constant $C_3>0$. 

Hence all $u_k$ are uniformly bounded. 

Since every minimizer is exponentially decaying, see \cite[Lemma $2.3$]{Z11}, there is a maximum point $p_k$ for $u_k$. Since $\Delta u_k(p_k) \le 0$, at $p_k$ we have in \eqref{E211}
$$
\tau_k Ru_k-u_k \log {u_k^2}-nu_k-\mu_k u_k \le 0. 
$$
As $u_k>0$, we have
$$
u_k(p_k)\ge \exp \left( \frac{R(p_k)\tau_k-n-\mu_k}{2} \right) \ge \exp \left( \frac{-n-\mu_k}{2} \right).
$$
As we have proved that $u_k$ is uniformly bounded, $\mu_k$ cannot tend to $-\infty$. In other words, $\mu_{\infty}$ is finite.

From \eqref{E212} we have
$$
\int_K u_k^2 \, dV+  \int_E u_k^2 \, dV=(4 \pi \tau_k)^{\frac{n}{2}}.
$$
Since $u_k$ are uniformly bounded and $K$ has finite volume, the first integral is uniformly bounded. Hence there is a $c_0 \in (0,1]$ satisfying
\begin{equation}\label{E2301}
 \int_{E} u_k^2 \, dV  \ge c_0(4 \pi \tau_k)^{\frac{n}{2}}.
\end{equation}

We define functions $\tilde u_k(x)=u_k(\sqrt{\tau_k}x)$, a new metric on $E$ as $\tilde g_{ij}(x)=g_{ij}(\sqrt{\tau_k}x)$, the corresponding Laplace operator $\widetilde \Delta_k=\dfrac{1}{\sqrt{\det \tilde g}}\partial_i \sqrt{\det \tilde g} \tilde g^{ij} \partial_j$ and scalar curvature $\tilde R(x)=\dfrac{1}{\tau_k}R(\sqrt{\tau_k}x)$. 

The metric $\tilde g$ on $E$, after a diffeomorphism, is nothing but $\tau_k^{-1} g$. So by the AE condition, $(E, \tilde g)$ converges in the Cheeger-Gromov sense to $(\mathbb R^n \backslash \{0\}, g_E)$ and the convergence is smooth away from the origin.

Now \eqref{E211} becomes
\begin{equation} \label{E231}
-4\widetilde \Delta_k \tilde u_k+\tilde R \tilde u_k-\tilde u_k \log {\tilde u_k^2}-n\tilde u_k=\mu_k \tilde u_k 
\end{equation}

All $\tilde u_k$ can be regarded as functions defined on $\mathbb R^n$ except for a ball with center $0$. We next prove that there is a limit in $W^{1,2}(\mathbb R^n)$ for the sequence $\{\tilde u_k\}$.

Since $\mu_k$ are bounded, from \eqref{E211} and $\eqref{E212}$ we have, for details see \cite[$(29)$]{STW04},
\begin{equation} \label{E232}
\tau_k \int_M |\nabla u_k|^2 (4 \pi \tau_k)^{-n/2} \, dV \le C
\end{equation}
where the bound $C$ is independent of $k$.

Therefore, for any annulus $C_{a,A}=\{x \in \mathbb R^n \mid a<|x|<A \}$, we have a uniform constant $C_1 >0$ such that
$$
\int_{C_{a,A}} \tilde u_k^2 \,d\widetilde V \le C_1
$$
and
$$
\int_{C_{a,A}} |\widetilde \nabla \tilde u_k|^2 \, d\widetilde V \le C_1
$$
for $k$ sufficiently large.

In other words, $\tilde u_k$ are bounded in $W^{1,2}(C_{a,A})$ and hence a subsequnce of $\{\tilde u_k\}$ converges weakly to a function $u_{\infty}$ in $W^{1,2}(C_{a,A})$ and by Sobolev immbedding converges strongly to $u_{\infty}$ in $L^p(C_{a,A})$ if $1 \le p < 2n/{n-2}$.
Choosing two sequences $a_m \to 0$ and $A_m \to \infty$ for $m=1,2,\ldots$, by the diagonal argument replacing $\{\tilde u_k \}$ by a subsequence if necessary, we have a function $u_{\infty}$ defined on $\mathbb R^n \backslash \{0\}$ such that
for every compact set $C$ in $\mathbb R^n \backslash \{0\}$, there is an $N>0$ such that $\{\tilde u_k ,\, k \ge N\}$ converges weakly to $u_{\infty}$ in $W^{1,2}(\mathbb R^n \backslash \{0\})$ and strongly in $L^p(\mathbb R^n \backslash \{0\})$ if $1 \le p < 2n/{n-2}$.

By the standard $L^p$ regularity property of elliptic equation \eqref{E231}, see \cite[Theorem $9.11$]{GT01}, the convergence is in $C^{1,\alpha}_{\text{loc}}(\mathbb R^n \backslash \{0\})$ for some $\alpha >0$. Therefore if $k \to \infty$ in \eqref{E231}, we have
\begin{equation} \label{E233}
-4\Delta_{g_E} u_{\infty}-u_{\infty} \log {u_{\infty}^2}-nu_{\infty}=\mu_{\infty} u_{\infty}.
\end{equation}
By the standard regularity property of elliptic operator and bootstrapping, see \cite[Theorem $6.17$]{GT01}, we know that $u_{\infty} \in C^{\infty}(\mathbb R^n \backslash \{0\})$ and either $u_{\infty} \equiv 0$ or $u_{\infty} >0$ by the strong maximum principle \cite{R81}.

Moreover we have 
\begin{equation} \label{E234}
\int_{\mathbb R^n \backslash \{0\}} u_{\infty}^2 \,dV_{g_E} \le (4\pi)^{\frac{n}{2}},
\end{equation}
and there exists a $C>0$ such that
\begin{equation} \label{E235}
\int_{\mathbb R^n \backslash \{0\}} |\nabla u_{\infty}|^2 \,dV_{g_E} \le C.
\end{equation}

\noindent \emph{Claim 2}.
$u_{\infty} \in W^{1,2}(\mathbb R^n)$. 

\emph{Proof of Claim 2}.
We first prove a lemma.

\begin{lem} \label{L202}
For a function $f \in C^1(\mathbb R^n \backslash \{0\})$, if $|f(x)| \le C|x|^{-\alpha}$ for some $\alpha <n-1$ and small $x$ and $|\nabla f|$ is integrable on the punctured ball $B(0,1)\backslash \{0\}$, then the function
$$
\tilde f(x)=
\begin{cases}
f(x), & x \ne 0; \\ 
0, & x=0.
\end{cases}
$$
has the weak derivative
$$
g_i(x)=
\begin{cases}
\partial_i f(x), & x \ne 0; \\ 
0, & x=0.
\end{cases}
$$
for $i=1,2,\ldots,n$
\end{lem}

\begin{proof}
For any $\phi \in C_0^{\infty}(\mathbb R^n)$,

\begin{align}
\int_{\mathbb R^n} \tilde f \partial_i \phi \, dV_{g_E} &= \lim_{r \to 0} \int_{\mathbb R^n \backslash B(0,r)} f \partial_i \phi \, dV_{g_E} \notag \\
& =-\lim_{r \to 0}\int_{\mathbb R^n \backslash B(0,r)} \partial_i f \phi \, dV_{g_E} +\lim_{r \to 0}\int_{S(0,r)} f \phi \upsilon^i \, d\sigma \notag \\
&=-\int_{\mathbb R^n} g_i \phi \, dV_{g_E} +\lim_{r \to 0}\int_{S(0,r)} f \phi \upsilon^i \, d\sigma \notag
\end{align}
where $\upsilon^i$ is the $i$th component of the inner normal vector of $S(0,r)$. The first integral in the last line is finite since $g_i$ is integrable by our assumption.

From the condition,
$$
\left| \int_{S(0,r)} f \phi \upsilon^i \, d\sigma \right| \le C'r^{n-1} \underset {x \in S(0,r)}{\max}|f| \le C'Cr^{n-1-\alpha}. 
$$

Since $\alpha <n-1$ we conclude that 
$$
\lim_{r \to 0}\int_{S(0,r)} f \phi \upsilon^i \, d\sigma=0
$$
and the lemma follows.
\end{proof}

Applying Moser's iteration to \eqref{E233} as the proof of Claim $1$, we have for any $0 < r \le 1$ and $|p|=r$,
$$
\underset{B(p,r/4)}{\max} u_{\infty}^2 \le \frac{C}{r^n} \int_{B(p,r/2)} u_{\infty}^2 \, dV_{g_E} \le \frac{C'}{r^n}.
$$
Hence we have
$$
u_{\infty}(x) \le \frac{C}{|x|^{n/2}}
$$
for $|x| \le 1$.
Therefore, by combining \eqref{E235} we can apply Lemma \ref{L202} to conclude that $u_{\infty}$ can be extended to $\mathbb R^n$. Moreover from \eqref{E234} and \eqref{E235}, 
$u_{\infty} \in W^{1,2}(\mathbb R^n)$. \\

\emph{Case 1}: $u_{\infty}>0$.

From \eqref{E234} we have
$$
0<\int_{\mathbb R^n} u_{\infty}^2 (4 \pi)^{-n/2}\,dV_{g_E} =c_1^2 \le 1.
$$
So if we set $\tilde u_{\infty}=u_{\infty}/c_1$, from \eqref{E233} we have
\begin{align}
&\int_{\mathbb R^n} (4|\nabla \tilde u_{\infty}|^2-\tilde u_{\infty}^2 \log{\tilde u_{\infty}^2}-n\tilde u_{\infty}^2)(4 \pi)^{-n/2} \, dV_{g_E} \notag \\
=&\frac{1}{c_1^2}  \int_{\mathbb R^n} (4|\nabla u_{\infty}|^2-u_{\infty}^2 \log{u_{\infty}^2}-nu_{\infty}^2)(4 \pi)^{-n/2} \, dV_{g_E} +\log{c_1^2} \notag \\
=&\mu_{\infty}+\log{c_1^2}<0
\end{align}
since $\mu_{\infty}<0$ and $c_1^2<1$.
But it contradicts the fact that $\mu_{\mathbb R^n}(g_E,1)=0$.\\

\emph{Case 2}: $u_{\infty} \equiv 0$.

In this case it means that $\tilde u_k(x)=u_k(\sqrt{\tau_k} x)$ converges uniformly to $0$ on any compact set of $E$.

We can assume that 
$$\limsup_{k \to \infty}\max _{x\in \mathbb R^n \backslash B(0,1)}\tilde u_k(x) =0.
$$
Otherwise, if there exists a sequence $\{p_k\}_{k \in \mathbb N}$ such that $\tilde u_k(p_k) \ge c>0$, by our assumption $p_k \to \infty$. On the other hand, $(M, \tilde g_k,p_k)$ converges smoothly to $(\mathbb R^n, g_E,p_{\infty})$ and hence $\tilde u_k(x)$ converges to $u'_{\infty}$ which is not identically zero. Then like case $1$, we have a contradiction.

Choose a small constant $a>0$ such that 
\begin{equation} \label{AE_12}
\int_{E \backslash B(0,2a\sqrt{\tau_k})} u_k^2 \, dV \ge \frac{c_0}{2} (4\pi \tau_k)^{\frac{n}{2}}.
\end{equation}
This is possible since \eqref{E2301} holds and $u_k$ are uniformly bounded.

Choose a function $\phi$ such that $\phi \in C_0^{\infty}(\mathbb R^n \backslash B(0,a))$ and $\phi =1$ on $\mathbb R^n \backslash B(0,2a)$. Then we have, like \eqref{E220}
\begin{align} \label{E238}
&\int \left(4|\widetilde \nabla (\phi \tilde u_k)|^2+(\tilde R-n) (\phi \tilde u_k)^2- (\phi \tilde u_k)^2 \log \tilde u_k^2 \right) (4\pi)^{-n/2} \, d \widetilde V \notag \\
= & \int 4|\widetilde \nabla \phi|^2 \tilde u_k^2 (4\pi)^{-n/2}\, d \widetilde V+ \mu_k \int (\phi \tilde u_k)^2  (4\pi)^{-n/2}\, d\widetilde V \\ \notag 
 \le& C\int_{C_{a,2a}} \tilde u_k^2 (4\pi)^{-n/2}\, d \widetilde V+\mu_k \int (\phi \tilde u_k)^2  (4\pi)^{-n/2}\, d\widetilde V.
\end{align}
But from our assumption $\{\tilde u_k\}$ converges to $0$ uniformly on $C_{a,2a}$, there exists a sequence $\{\epsilon_k\} \searrow 0$ such that
\begin{equation} \label{E240}
\int \left(4|\widetilde \nabla (\phi \tilde u_k)|^2+(\tilde R-n) (\phi \tilde u_k)^2- (\phi \tilde u_k)^2 \log (\phi \tilde u_k)^2 \right) (4\pi)^{-n/2}\, d\widetilde V \le \epsilon_k+\mu_k \int (\phi \tilde u_k)^2 (4\pi)^{-n/2}  \, d\widetilde V
\end{equation}
if $k$ is sufficiently large.

On the other hand,
$$
(4\pi)^{\frac{n}{2}} \ge \int \tilde u_k^2  \, d\widetilde V  \ge \int (\phi \tilde u_k)^2  \, d\widetilde V \ge \int_{\mathbb R^n \backslash B(0,2a)}\tilde u_k^2  \, d\widetilde V \ge \frac{c_0}{2}(4\pi)^{\frac{n}{2}}.
$$

So if we set 
$$
\int (\phi \tilde u_k)^2  \, d\widetilde V=\eta_k^2(4\pi)^{\frac{n}{2}},
$$
and $\psi_k=\dfrac{\phi \tilde u_k}{\eta_k}$, then $\eta_k \in [\frac{c_0}{2}, 1]$ and 
$$
\int \psi_k^2  (4\pi)^{-n/2}\, d\widetilde V=1.
$$
From \eqref{E240} we have,
\begin{align}\label{S100}
&\int \left( 4|\widetilde \nabla \psi_k|^2+(\tilde R-n) \psi_k^2- \psi_k^2 \log \psi_k^2 \right)(4\pi)^{-n/2}  \, d\widetilde V  \notag \\
\le & \eta_k^{-2} \epsilon_k+\mu_k+\log \eta_k^2 \le \eta_k^{-2} \epsilon_k+\mu_k \le 4c_0^{-2}\epsilon_k+\mu_k. 
\end{align}
When $k$ is sufficiently large, $4c_0^{-2}\epsilon_k+\mu_k$ is negative. Since $\psi_k$ converges to $0$ uniformly on $\mathbb R^n$, it is easy to check that $4|\widetilde \nabla \psi_k|^2+(\tilde R-n) \psi_k^2- \psi_k^2 \log \psi_k^2$ is positive when $k$ is large.

Thus we have derived a contradiction and the proof of Theorem \ref{T202} is complete. 
\end{proof}

With the same proof as Theorem \ref{T202}, we have the following uniform version which will be used in Section $7$.

\begin{thm}\label{TC202}
Let $(M^n_i,g_i)$ be a family of AE manifolds of the same order $\sigma >0$ with positive scalar curvature. For some compact sets $K_i \subset M^n_i$, we have a family of diffeomorphisms $\Phi_i: M_i^n \backslash K_i \to \mathbb R^n \backslash B(0,A)$ such that under these identifications,
\begin{equation}
|(g_i)_{uv} -\delta_{uv}| \le C_0r^{-\sigma_i}, \quad | \partial^{|k|}(g_i)_{uv}| \le C_kr^{-\sigma-k}, \quad 1\le u,v \le n
\end{equation} 
for some constants $C_k, k=0,1,\ldots$ which are independent of $i$.
Moreover, there exist compact sets $K_i'$ containing $K_i$ such that $\text{dis}_{g_E}(K_i,K_i')\ge d_0$ and 
$$
\left(\int_{M_i-K_i'}u^{\frac{2n}{n-2}}dV\right)^{\frac{n-2}{n}} \le C\int_{M_i-K_i'}|\nabla u|^2dV
$$
for some $d_0>0, \,C>0$ and any $u \in C^1_0(M_i-K_i')$. In addition, if $|\text{Rm}|_{g_i} \le R_0$, $\text{inj}_{g_i} \ge i_0$, $\text{Vol}_{g_i}(K_i')\le V_0$ and $\inf_{p \in K_i'} R_{g_i}(p) \ge r_0$ for some positive constants $R_0, r_0, i_0$ and $V_0$, we have
$$
\lim_{\tau \to +\infty} \mu_{M_i}(g_i,\tau)=0
$$
for all $g_i$ uniformly.
\end{thm}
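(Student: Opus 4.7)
The plan is to mimic the contradiction argument used in Theorem \ref{T202}, while carefully tracking that every constant depends only on the uniform data $\sigma, C_k, R_0, i_0, V_0, r_0, d_0, C$ and not on $i$. Suppose the conclusion fails, so there exist $\epsilon_0 > 0$ and sequences $\tau_k \to \infty$ and indices $i_k$ with $\mu_{M_{i_k}}(g_{i_k}, \tau_k) \le -\epsilon_0$. Write $(M_k, g_k) := (M_{i_k}, g_{i_k})$ and let $u_k$ be the positive minimizer realizing $\mu_k := \mu_{M_k}(g_k, \tau_k)$, which exists for the same reason as in Theorem \ref{T202} since each $(M_k, g_k)$ has bounded geometry with uniform constants.

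The first task is a uniform version of Lemma \ref{L201}: on $M_k - K_k'$ the Sobolev inequality is directly hypothesized, while on a neighborhood of $K_k'$ it follows from bounded geometry (uniform $R_0, i_0, V_0$) with constants independent of $k$. The uniform lower bound $r_0$ on $R$ over $K_k'$ lets one absorb the $u^2$ term into $Ru^2$, and the two pieces are patched via a cutoff whose transition region has width at least $d_0$. With this in hand the Moser iteration of Claim 1 runs verbatim, yielding $\sup_{M_k} u_k \le C_3$ uniformly in $k$. Evaluating \eqref{E211} at a maximum point $p_k$ as in the original argument gives $u_k(p_k) \ge \exp((-n - \mu_k)/2)$, so $\mu_k$ is bounded below; after extraction $\mu_k \to \mu_\infty \in [-C, -\epsilon_0]$.

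Rescaling $\tilde g_k(x) = g_k(\sqrt{\tau_k}\, x)$ and $\tilde u_k(x) = u_k(\sqrt{\tau_k}\, x)$, the uniform AE bounds $|\partial^{|j|}(g_k)_{uv}| \le C_j r^{-\sigma - j}$ are what make the AE ends converge in the Cheeger--Gromov sense to $(\mathbb{R}^n \setminus \{0\}, g_E)$ uniformly across $k$. By a diagonal argument $\tilde u_k$ converges in $C^{1,\alpha}_{\text{loc}}$ to a solution $u_\infty$ of \eqref{E233}, which extends to $\mathbb{R}^n$ and lies in $W^{1,2}(\mathbb{R}^n)$ via Lemma \ref{L202} exactly as before. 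If $u_\infty > 0$, the rescaling computation of Case 1 contradicts $\mu_{\mathbb{R}^n}(g_E, 1) = 0$. If $u_\infty \equiv 0$, the uniform mass bound on $K_k'$ (supplied by the volume bound $V_0$ together with the uniform $L^\infty$ bound on $u_k$) forces a definite fraction $c_0 > 0$ of $\int u_k^2\, dV$ to sit in the AE end, so \eqref{E2301} and the subsequent cutoff argument go through with $c_0$ depending only on the uniform data, again yielding the contradiction \eqref{S100}.

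The principal obstacle is precisely this uniformity bookkeeping: the AE decay constants $C_k$ being independent of $i$ is what permits the Cheeger--Gromov limit of the rescaled ends to be extracted along the mixed sequence $(i_k, \tau_k)$, while $r_0$ and $V_0$ together play the role previously played by $R > 0$ and the finiteness of $\text{Vol}(K)$ on a single manifold. Once each of these uniform constants is propagated through the proof of Theorem \ref{T202}, no new estimate is needed and the argument closes.
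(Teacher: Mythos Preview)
Your proposal is correct and follows exactly the approach the paper indicates: the paper does not give a separate proof of Theorem~\ref{TC202} but simply states that the proof of Theorem~\ref{T202} goes through verbatim, with the remark that the Sobolev constant in Lemma~\ref{L201} depends only on the curvature bound and injectivity radius, and that the volume control $V_0$ on $K_i'$ is what yields the uniform lower bound \eqref{E2301}. Your write-up identifies precisely these dependencies and is in fact more explicit than the paper about where each uniform hypothesis enters.
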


\begin{rem}
We can get a uniform constant for Lemma \ref{L201} since the Sobolev constant only depends on the bounds of curvature and injective radius. The volume control of $K_i'$ is used to prove \eqref{E2301}.
\end{rem}

Next, we use Theorem \ref{T202} to prove the no local collapsing theorem in the case of AE manifold. Recall that a Riemannian manifold is $\kappa$-noncollapsed on all scales if for any metric ball $B(x,r)$ satisfying $|\text{Rm}|\le r^{-2}$ for all $y \in B(x,r)$, we have
$$
\frac{\text{Vol}B(x,r)}{r^n}\ge \kappa.
$$

Following the celebrated work of Perelman, we have

\begin{thm}\label{T205}
Let $g(t)$, $t\in [0,\infty)$, be the Ricci flow solution on an AE manifold $M^n$ with $R>0$, then there exists a $\kappa >0$ such that $g(t)$ is $\kappa$-noncollapsed on all scales.
\end{thm}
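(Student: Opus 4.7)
The plan is to carry out Perelman's standard no-local-collapsing argument via the $\mu$-functional, using Theorem \ref{T202} to produce a time-independent lower bound for $\mu(g(t),r^2)$ along the flow.

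First I would fix $t_0\ge 0$ and a ball $B_{g(t_0)}(x_0,r)\subset M$ on which $|\mathrm{Rm}(g(t_0))|\le r^{-2}$, and set $V=\mathrm{Vol}_{g(t_0)}B_{g(t_0)}(x_0,r)$. Choose a standard cutoff $\phi$ equal to $1$ on $B_{g(t_0)}(x_0,r/2)$, supported in $B_{g(t_0)}(x_0,r)$, with $0\le\phi\le 1$ and $|\nabla\phi|\le 4/r$, and set $u=c\phi$, where $c>0$ is the normalization constant making $\int_M u^2(4\pi r^2)^{-n/2}\,dV=1$. The standard Perelman test-function computation --- using $|R|\le n(n-1)r^{-2}$ on the support of $\phi$, the gradient bound on $\phi$, Bishop--Gromov volume doubling at scale $r$, and the elementary lower bound $c^2\ge(4\pi r^2)^{n/2}/V$ --- yields
\[
\mu(g(t_0),r^2)\ \le\ \overline{\mathcal W}(g(t_0),u,r^2)\ \le\ -\log\!\frac{V}{r^n}+C_n
\]
for a constant $C_n$ depending only on $n$.

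Next I would invoke the monotonicity of $\mu$ under Ricci flow on $[0,t_0]$ --- valid because $|\mathrm{Rm}(g(t))|$ is bounded on any compact time interval --- applied with $\bar\tau=t_0+r^2$, so that $\tau(0)=t_0+r^2$ and $\tau(t_0)=r^2$. This yields
\[
\mu(g(t_0),r^2)\ \ge\ \mu(g(0),t_0+r^2).
\]
By Theorem \ref{T202}, $\mu(g(0),\tau)\to 0$ as $\tau\to\infty$; combined with the upper bound $\mu\le 0$ and the continuity of $\tau\mapsto\mu(g(0),\tau)$, one obtains a uniform lower bound $\mu(g(0),\tau)\ge -M$ on any range $\tau\ge\tau_*>0$. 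Inserting this gives
\[
\frac{V}{r^n}\ \ge\ e^{-(M+C_n)}=:\kappa>0,
\]
which is the asserted $\kappa$-noncollapsing.

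The one delicate point --- and what I expect to be the main obstacle --- is extending the lower bound on $\mu(g(0),\tau)$ down to arbitrarily small $\tau>0$, so that the argument holds at all scales rather than only on scales bounded below. I would address this through the scaling identity $\mu(g(0),\tau)=\mu(\tau^{-1}g(0),1)$: the rescaled metric $\tau^{-1}g(0)$ remains AE of the same order, on any fixed compact set it converges smoothly to the flat metric as $\tau\to 0^+$, and so by the bounded-geometry log-Sobolev inequality on $(M,g(0))$ together with Lemma \ref{L4001}, one obtains $\liminf_{\tau\to 0^+}\mu(g(0),\tau)\ge 0$, which closes the argument.
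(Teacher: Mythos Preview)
Your core argument --- the Perelman test-function estimate followed by monotonicity and Theorem~\ref{T202} --- is exactly what the paper does, and that part is fine. The difference, and the place where your proposal has a genuine gap, is the handling of small $\tau=t_0+r^2$.

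The paper does not try to control $\mu(g(0),\tau)$ for small $\tau$ at all. Instead it splits the time axis: for $t\in[0,1]$, it uses directly that the AE condition is preserved with uniform constants on $[0,1]$ (Theorem~\ref{R01}), so $(M,g(t))$ has bounded geometry and Euclidean volume growth, giving some $\kappa_1$-noncollapsing at all scales without any reference to $\mu$. Only for $t\ge 1$ does it invoke the $\mu$-argument, and there one has $t+r^2\ge 1$, so continuity of $\tau\mapsto\mu(g(0),\tau)$ on $[1,\infty)$ together with Theorem~\ref{T202} gives the uniform lower bound on $\mu(g(0),t+r^2)$.

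Your alternative route --- proving $\liminf_{\tau\to 0^+}\mu(g(0),\tau)\ge 0$ --- would also work in principle, but your justification is wrong as written. Lemma~\ref{L4001} is an \emph{upper} semicontinuity statement: it gives $\limsup_i\mu(g_i,\tau)\le\mu(g_\infty,\tau)$, not $\ge$. Applied to the blow-ups $\tau^{-1}g(0)\to g_E$, it yields only $\limsup_{\tau\to 0^+}\mu(g(0),\tau)\le 0$, which you already know and which is the wrong direction. The lower bound $\liminf\ge 0$ is true for manifolds with bounded geometry, but it requires a genuinely different argument (localization of near-minimizers at scale $\sqrt\tau$, or a quantitative local log-Sobolev inequality), not Lemma~\ref{L4001}. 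Either supply that argument, or --- more simply --- adopt the paper's split at $t=1$ and use the AE structure directly for early times.
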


\begin{proof}
Since Ricci flow preserves the AE condition. So there exists a $\kappa_1>0$ such that for any $t \in [0,1]$, $r>0$, we have

\begin{equation}\label{E242}
\frac{\text{Vol}B_{g(t)}(x,r)}{r^n}\ge \kappa_1,
\end{equation}
where $B_{g(t)}(x,r)$ is a metric ball in $(M^n,g(t))$.

For $t \in [1,\infty)$, $r>0$ and $p \in M$ such that $|\text{Rm}|\le r^{-2}$ in $B_{g(t)}(x,r)$ we have the following inequality whose proof can be found in \cite[Proposition $5.37$]{CLN06}
\begin{equation}\label{E243}
\mu(g(t),r^2) \le \log \frac{\text{Vol}B_{g(t)}(x,r)}{r^n} +C(n).
\end{equation}

Then by \eqref{E243}, Theorem \ref{T202} and the continuity and monotonicity of $\mu(g,\tau)$, there exists a constant $C$ depending on $g(0)$ that
$$
C \le \mu(g(0), r^2+t) \le \mu(g(t),r^2) \le \log \frac{\text{Vol}B_{g(t)}(x,r)}{r^n} +C(n).
$$

We conclude that there exists $\kappa_2>0$ such that
\begin{equation}\label{E244}
\frac{\text{Vol}B_{g(t)}(x,r)}{r^n}\ge \kappa_2.
\end{equation}

Combining \eqref{E242} and \eqref{E244}, we can find $\kappa=\min(\kappa_1,\kappa_2)>0$ such that $g(t)$ is $\kappa$-noncollapsed on all scales. 
\end{proof}

\section{Analysis of singularity at time infinity}

For the Ricci flow $(M,g(t))$, $t \in [0,\infty)$, there are two different types of singularity at infinity classified by Hamilton, see \cite{H95}.

Case $1$ (Type IIb): $\text{sup}_{M \times [0,\infty)} t|\text{Rm}|=\infty$.

In this case, we take any sequences of times $T_i \to \infty$ and then choose $p_i=(x_i,t_i) \in M^n \times [0,T_i]$ such that
\begin{align}\label{S201}
t_i(T_i-t_i)|\text{Rm}|(x_i,t_i)=\underset{M^n \times (0,T_i]}{\text{sup}}t(T_i-t)|\text{Rm}|(x,t).
\end{align}
It can be seen from the above choice that $t_i \to +\infty$. Indeed, from the definition of Type IIb, we can find two sequences $L_i \to +\infty$, $y_i \in M$ such that $\lim_{i \to +\infty} L_i |\text{Rm}|(y_i,L_i)=+\infty$ and $L_i \le T_i/2$. Then we have
\begin{align}\label{S201a}
\underset{M^n \times (0,T_i]}{\text{sup}}t(T_i-t)|\text{Rm}|(x,t) \ge L_i(T_i-L_i)|\text{Rm}|(y_i,L_i) \ge \frac{1}{2}T_iL_i |\text{Rm}|(y_i,L_i).
\end{align}
Then it is clear from \eqref{S201} and \eqref{S201a} that $t_i \to +\infty$.

If we set $Q_i=|\text{Rm}|(x_i,t_i)$, it can be proved that $( M, g_i(t),p_i)$ converges smoothly in the Cheeger-Gromov sense to a complete eternal Ricci flow solution $(M_{\infty},g_{\infty}(t),p_{\infty}) ,\, t \in(-\infty, +\infty)$ where $g_i(t)=Q_ig(t_i+Q_i^{-1}t)$.

Then for any $\tau>0$, 
\begin{align}\label{E303}
\mu(g_{\infty}(0),\tau) &\ge \limsup_{i \to \infty} \mu(Q_ig(t_i),\tau) \notag \\
&\ge \limsup_{i \to \infty} \mu(g(t_i),\frac{\tau}{Q_i})\notag \\
&\ge \limsup_{i \to \infty} \mu(g(0),\frac{\tau}{Q_i}+t_i)=0 
\end{align}
where the first inequality follows from Lemma \ref{L4001}, the last from the monotonicity of $\mu$ and the equality is from Theorem \ref{T202}.

From Theorem \ref{T201}, it must be the case that $M^n$ is isometric to $\mathbb R^n$. But this is impossible since $|\text{Rm}|_{g_{\infty}(0)}(x_{\infty})=\lim_{i \to \infty}|\text{Rm}|_{g_i(0)}(x_i)=1$.

Case $2$ (Type III): $\text{sup}_{M \times [0,\infty)} t|\text{Rm}|<\infty$.

In this case, suppose $p_i=(x_i,t_i)$ is a sequence of points and times with $t_i \to \infty$ and 
$$
t_i|\text{Rm}|(x_i,t_i)=t_i\underset{x\in M}{\text{sup}}|\text{Rm}|(x,t_i) \ge c
$$ 
for some $c>0$. Then like the first case $(M,g_i(t)=Q_ig(t_i+Q_i^{-1}t),p_i)$, $t \in [-t_iQ_i,\infty)$, converges to $(M_{\infty},g_{\infty}(t),x_{\infty}) ,\, t \in(-c, +\infty)$, where $g_i(t)=Q_ig(t_i+Q_i^{-1}t)$. Again we derive a contradiction.

Therefore, we have proved that the singularity at infinity is of type III, and
\begin{equation}\label{E302}
\lim_{t \to \infty} t \,\underset{M}{\text{sup}}|\text{Rm}(t)|=0.
\end{equation}

We choose an $\epsilon \in (0,1)$ to be determined later. From \eqref{E302} we assume for $t$ large enough,

\begin{equation}\label{E303}
\underset{M}{\text{sup}}|\text{Rm}| \le \frac{\epsilon}{1+t}. 
\end{equation}

So by a translation of time, we assume \eqref{E303} holds for any $t \ge 0$.

Next, we prove a gradient estimate and Harnack inequality for the solution of heat equation under the condition of \eqref{E303}. The proof is a long time version of the Li-Yau estimates, see \cite{LY86}.

Set $u_0=r^{-2-\sigma}$ where $r$ is the function defined in the introduction. We consider the positive solution $u$ of the heat equation

\begin{equation}\label{E06}
u_t=\Delta u
\end{equation}
with the initial condition $u(0)=u_0$.

It can be proved by using the maximum principle as in the proof of Theorem \ref{R01}, that for any $T>0$, $t \in [0,T]$, $u(t)$ and $|\nabla u|(t)$ have the same decaying rates as $u(0)$ and $|\nabla_{g(0)}u|(0)$, respectively. To be precise, there exist $c_1(T)>0$ and $c_2(T)>0$ such that 

\begin{align}\label{E305}
c_1(T)r^{-2-\sigma} \le u&(t) \le c_2(T)r^{-2-\sigma}, \\  \notag
c_1(T)r^{-3-\sigma} \le|\nabla u&|(t) \le c_2(T)r^{-3-\sigma}.
\end{align}

Let $f=\log u$. Then $f$ satisfies
$$
f_t=\Delta f+|\nabla f|^2.
$$
If we set $H(x,t)=t(|\nabla f|^2-2f_t)$, then we have the following lemma.

\begin{lem} \label{L303}
Under the condition $\underset{M}{\text{sup}}\,|\text{Rm}|(x,t) \le \dfrac{\epsilon}{1+t}$,
\begin{align}\label{E306}
\Delta H-H_t \ge -2\nabla f \cdot \nabla H+\frac{t}{n}(|\nabla f|^2-f_t)^2-(|\nabla f|^2-2f_t)-3|\nabla f|^2-\frac{4\epsilon^2}{1+t} \\
\end{align}
\end{lem}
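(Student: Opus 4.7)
The plan is the classical Li--Yau--Hamilton computation in the Ricci flow setting: differentiate $H=tF$ with $F:=|\nabla f|^{2}-2f_{t}$ along $\Delta-\partial_{t}$, use Bochner plus the commutator $[\partial_{t},\Delta]f=2\langle\mathrm{Rc},\nabla^{2}f\rangle$ coming from $\partial_{t}g_{ij}=-2R_{ij}$, and then use the curvature bound \eqref{E303} to absorb the unwanted curvature terms.

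First I would compute $(\Delta-\partial_{t})|\nabla f|^{2}$ and $(\Delta-\partial_{t})f_{t}$ separately. Using the Bochner formula together with $\partial_{t}|\nabla f|^{2}=2\mathrm{Rc}(\nabla f,\nabla f)+2\nabla f\cdot\nabla f_{t}$ and $f_{t}-\Delta f=|\nabla f|^{2}$, one gets
\begin{equation*}
(\Delta-\partial_{t})|\nabla f|^{2}=2|\nabla^{2}f|^{2}-2\nabla f\cdot\nabla|\nabla f|^{2}.
\end{equation*}
A short calculation with $g^{ij}\partial_{t}\Gamma_{ij}^{k}=0$ (contracted Bianchi) gives the commutator identity $[\partial_{t},\Delta]f=2\langle\mathrm{Rc},\nabla^{2}f\rangle$, and combined with $(\partial_{t}-\Delta)f=|\nabla f|^{2}$ this yields
\begin{equation*}
(\Delta-\partial_{t})f_{t}=-2\mathrm{Rc}(\nabla f,\nabla f)-2\nabla f\cdot\nabla f_{t}-2\langle\mathrm{Rc},\nabla^{2}f\rangle.
\end{equation*}
Subtracting and re-expressing the $\nabla|\nabla f|^{2}-2\nabla f_{t}=\nabla F$ combination produces
\begin{equation*}
(\Delta-\partial_{t})F=2|\nabla^{2}f|^{2}+4\mathrm{Rc}(\nabla f,\nabla f)+4\langle\mathrm{Rc},\nabla^{2}f\rangle-2\nabla f\cdot\nabla F.
\end{equation*}

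Next I would convert to $H=tF$ via $(\Delta-\partial_{t})H=t(\Delta-\partial_{t})F-F$, giving
\begin{equation*}
(\Delta-\partial_{t})H=2t|\nabla^{2}f|^{2}+4t\,\mathrm{Rc}(\nabla f,\nabla f)+4t\langle\mathrm{Rc},\nabla^{2}f\rangle-2\nabla f\cdot\nabla H-F.
\end{equation*}
Now apply Cauchy--Schwarz to the cross term, $|4t\langle\mathrm{Rc},\nabla^{2}f\rangle|\le t|\nabla^{2}f|^{2}+4t|\mathrm{Rc}|^{2}$, leaving a leftover $t|\nabla^{2}f|^{2}$ which I bound below by $\frac{t}{n}(\Delta f)^{2}=\frac{t}{n}(|\nabla f|^{2}-f_{t})^{2}$ via $|\nabla^{2}f|^{2}\ge\frac{1}{n}(\Delta f)^{2}$. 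The two genuinely curvature-dependent terms are then controlled using \eqref{E303}: the term $-4t|\mathrm{Rc}|^{2}$ contributes at most a multiple of $\epsilon^{2}/(1+t)$ since $t/(1+t)^{2}\le 1/(1+t)$, while $|4t\,\mathrm{Rc}(\nabla f,\nabla f)|\le C\sqrt{n}\,\epsilon\,|\nabla f|^{2}$; choosing $\epsilon$ small enough so that this constant is at most $3$ yields the $-3|\nabla f|^{2}$ term in the stated inequality. Assembling everything and identifying $-F=-(|\nabla f|^{2}-2f_{t})$ produces exactly \eqref{E306}.

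The only delicate point is the bookkeeping with the constant $\epsilon$ in the Ricci term $4t\,\mathrm{Rc}(\nabla f,\nabla f)$ — one needs $\epsilon$ a priori small enough (depending on $n$) that the coefficient of $|\nabla f|^{2}$ produced there is at most $3$, and likewise for the constant in front of $\epsilon^{2}/(1+t)$. This is consistent with the remark in the text that $\epsilon\in(0,1)$ will be chosen later. Otherwise the argument is a purely mechanical combination of Bochner, the Ricci flow commutator, and one application of Cauchy--Schwarz, so I expect no genuine obstacle once the signs and dimensional constants are tracked carefully.
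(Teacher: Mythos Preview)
Your proposal is correct and follows essentially the same Li--Yau computation as the paper: Bochner on $|\nabla f|^2$, the commutator $[\partial_t,\Delta]f=2\langle Rc,\nabla^2 f\rangle$, Cauchy--Schwarz on the $\langle Rc,\nabla^2 f\rangle$ cross term, and $|\nabla^2 f|^2\ge \frac{1}{n}(\Delta f)^2$. The only difference is organizational---you apply $(\Delta-\partial_t)$ to $F$ directly and observe the exact Ricci cancellation in $(\Delta-\partial_t)|\nabla f|^2$, whereas the paper computes $\Delta H$ and $H_t$ separately and picks up the two $Rc(\nabla f,\nabla f)$ contributions (from Bochner and from $\partial_t|\nabla f|^2$) at different stages; the net curvature bookkeeping and the use of \eqref{E303} with $\epsilon$ small are the same.
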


\begin{proof}
We have
\begin{align} \label{E307}
\Delta H=t\Delta (|\nabla f|^2-2f_t).
\end{align}

By using the Bochner's formula
\begin{align}
\Delta |\nabla f|^2&=2|\nabla ^2 f|^2+2Rc(\nabla f, \nabla f)+2\langle \nabla\Delta f,\nabla f \rangle \\ \notag
& =2|\nabla ^2 f|^2+2Rc(\nabla f, \nabla f)-2\langle \nabla(|\nabla f|^2-f_t),\nabla f \rangle \\ \notag
& \ge 2|\nabla ^2 f|^2-\frac{2}{1+t}|\nabla f|^2-2\langle \nabla(|\nabla f|^2-f_t),\nabla f \rangle
\end{align}
where the last inequality follows from our curvature estimate.

On the other hand,
$$
\Delta f_t=(\Delta f)_t-2R_{ij}f_{ij} \le (\Delta f)_t+ 2|Rc|^2+\frac{1}{2}|\nabla^2f|^2.
$$

So we get
\begin{align} \label{E308}
\Delta H &\ge t\left(|\nabla ^2 f|^2-2\langle \nabla(|\nabla f|^2-f_t),\nabla f \rangle-2(\Delta f)_t-\frac{2}{1+t}|\nabla f|^2-4|Rc|^2 \right) \notag \\
&\ge \frac{t}{n}(|\nabla f|^2-f_t)^2-2t\langle \nabla(|\nabla f|^2-f_t),\nabla f \rangle \notag \\
& \quad  +2t(|\nabla f|^2-f_t)_t-2|\nabla f|^2-\frac{4\epsilon ^2}{1+t}.
\end{align}

Then we have
$$
H_t=|\nabla f|^2-2f_t+t(|\nabla f|^2-2f_t)_t.
$$

Therefore,
\begin{align} \label{E309}
\Delta H-H_t \ge & \frac{t}{n}(|\nabla f|^2-f_t)^2-2t\langle \nabla(|\nabla f|^2-f_t),\nabla f \rangle \notag \\
& +2t(|\nabla f|^2-f_t)_t-t(|\nabla f|^2-2f_t)_t-(|\nabla f|^2-2f_t)-2|\nabla f|^2-\frac{4\epsilon ^2}{1+t}  \notag \\
=& \frac{t}{n}(|\nabla f|^2-f_t)^2-2t\langle \nabla(|\nabla f|^2-f_t),\nabla f \rangle \notag \\
& +t|\nabla f|_t^2-(|\nabla f|^2-2f_t)-2|\nabla f|^2-\frac{4\epsilon ^2}{1+t}  \notag \\
=& \frac{t}{n}(|\nabla f|^2-f_t)^2-2t\langle \nabla(|\nabla f|^2-f_t),\nabla f \rangle \notag \\
& +2t\langle \nabla f_t,\nabla f \rangle+2tRic(\nabla f,\nabla f)-(|\nabla f|^2-2f_t)-2|\nabla f|^2-\frac{4\epsilon ^2}{1+t}  \notag \\
\ge& \frac{t}{n}(|\nabla f|^2-f_t)^2-2\langle \nabla H,\nabla f \rangle-(|\nabla f|^2-2f_t)-3|\nabla f|^2-\frac{4\epsilon ^2}{1+t}.  
\end{align}
\end{proof}

Now we can use the above equation to derive the Li-Yau inequality by following the same method in \cite[Theorem $4.2$]{SY94} to conclude that
\begin{equation}\label{E3100}
\frac{|\nabla u|^2}{u^2}-2\frac{u_t}{u}\le \frac{c_1}{t}
\end{equation}
for some $c_1>0$.
Note that in \cite[($1.10$)]{SY94} the extra term $2nk$ when $\alpha=2$ can be bounded by $\dfrac{C}{1+t}$ in our case.

With the gradient estimate \eqref{E3100}, we prove the following Harnack inequality for $u$.

\begin{thm} \label{T307}
For any $x,y \in M^n$ and $0<t_1<t_2$,
$$
\frac{u(y,t_2)}{u(x,t_1)}\ge \left(\frac{t_2}{t_1}\right)^{-c_1/2} \exp \left(-\frac{d_{g(t_1)}(x,y)^2}{2(t_2-t_1)}(1+t_2-t_1)^{2\epsilon} \right).
$$
\end{thm}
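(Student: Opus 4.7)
\emph{Proof proposal for Theorem \ref{T307}.}
The plan is to integrate the Li--Yau type gradient estimate \eqref{E3100} along a suitable space-time path from $(x,t_1)$ to $(y,t_2)$, and then use the curvature decay hypothesis \eqref{E303} to compare $g(s)$ with $g(t_1)$ on that path. Rewriting \eqref{E3100} as
\begin{equation*}
(\log u)_t \;\ge\; \tfrac{1}{2}|\nabla \log u|^2_{g(t)} - \frac{c_1}{2t},
\end{equation*}
I would first take any smooth spatial curve $\gamma:[t_1,t_2]\to M$ with $\gamma(t_1)=x$ and $\gamma(t_2)=y$, and compute along it
\begin{equation*}
\frac{d}{ds}\log u(\gamma(s),s) \;=\; (\log u)_t + \langle \nabla \log u, \dot\gamma\rangle_{g(s)}.
\end{equation*}
Combining the Li--Yau inequality with the elementary bound $\langle a,b\rangle \ge -\tfrac{1}{2}|a|^2 - \tfrac{1}{2}|b|^2$ (applied to $a=\nabla\log u$, $b=\dot\gamma$ at time $s$) yields
\begin{equation*}
\frac{d}{ds}\log u(\gamma(s),s) \;\ge\; -\tfrac{1}{2}|\dot\gamma(s)|^2_{g(s)} - \frac{c_1}{2s},
\end{equation*}
and integration from $t_1$ to $t_2$ gives
\begin{equation*}
\log\frac{u(y,t_2)}{u(x,t_1)} \;\ge\; -\tfrac{1}{2}\int_{t_1}^{t_2}|\dot\gamma(s)|^2_{g(s)}\,ds - \tfrac{c_1}{2}\log\frac{t_2}{t_1}.
\end{equation*}

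The next step is to choose $\gamma$ to be a minimizing $g(t_1)$-geodesic from $x$ to $y$, reparameterized linearly on $[t_1,t_2]$, so that $|\dot\gamma|_{g(t_1)} = d_{g(t_1)}(x,y)/(t_2-t_1)$ is constant. To pass from $|\dot\gamma|_{g(t_1)}$ to $|\dot\gamma|_{g(s)}$, I would use the Ricci flow equation applied to the function $s\mapsto \log g(s)(X,X)$ for a fixed tangent vector $X$: by $\partial_s g = -2Rc$ and $|Rc|_{g(s)}\le C\epsilon/(1+s)$ (which follows from \eqref{E303}),
\begin{equation*}
\bigl|\partial_s \log g(s)(X,X)\bigr| \;\le\; \frac{2C\epsilon}{1+s},
\end{equation*}
so that integration over $[t_1,s]\subset[t_1,t_2]$ yields
\begin{equation*}
g(s)(X,X) \;\le\; g(t_1)(X,X)\Bigl(\frac{1+s}{1+t_1}\Bigr)^{2C\epsilon} \;\le\; g(t_1)(X,X)(1+s-t_1)^{2C\epsilon}.
\end{equation*}
(Relabelling $C\epsilon$ as $\epsilon$ by adjusting the original choice in \eqref{E303} gives the exponent in the theorem.) Thus $|\dot\gamma(s)|^2_{g(s)} \le (1+t_2-t_1)^{2\epsilon}|\dot\gamma|^2_{g(t_1)}$ uniformly on $[t_1,t_2]$, and
\begin{equation*}
\int_{t_1}^{t_2}|\dot\gamma(s)|^2_{g(s)}\,ds \;\le\; (1+t_2-t_1)^{2\epsilon}\,\frac{d_{g(t_1)}(x,y)^2}{t_2-t_1}.
\end{equation*}

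Inserting this into the integrated Li--Yau inequality and exponentiating gives exactly the claimed Harnack inequality. The main technical point, and what I expect to be the only real obstacle, is the metric comparison step: one must verify that $|Rc|_{g(s)}\le C\epsilon/(1+s)$ really does imply the two-sided bound $g(s)\asymp g(t_1)$ with the prescribed power $(1+t_2-t_1)^{2\epsilon}$ rather than an exponential-in-time factor; this relies crucially on the integrability $\int_{t_1}^{t_2}\tfrac{ds}{1+s}\le \log(1+t_2-t_1)$ coming from the $1/(1+t)$ decay of curvature, which is precisely what the Type III normalization in \eqref{E303} provides. Everything else is a routine application of the gradient estimate along the chosen path.
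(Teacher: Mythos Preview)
Your proposal is correct and follows essentially the same approach as the paper's proof: integrate the Li--Yau estimate \eqref{E3100} along a $g(t_1)$-minimizing geodesic reparametrized on $[t_1,t_2]$, complete the square to absorb the cross term, and then use the Type~III bound \eqref{E303} to compare $|\dot\gamma|^2_{g(s)}$ with $|\dot\gamma|^2_{g(t_1)}$, yielding the factor $(1+t_2-t_1)^{2\epsilon}$. Your metric-comparison step via $\bigl(\frac{1+s}{1+t_1}\bigr)^{2C\epsilon}\le(1+s-t_1)^{2C\epsilon}$ is exactly what the paper invokes (and your remark about absorbing the dimensional constant $C$ into $\epsilon$ matches the paper's implicit convention).
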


\begin{proof}
Suppose $\gamma (t):[t_1,t_2] \to M$ is a geodesic with respect to the metric $g(t_1)$ such that 
$$
|\dot{\gamma}(t)|=\frac{d_{g(t_1)}(x,y)}{t_2-t_1}, \quad t_1\le t \le t_2, \\
$$
$$
\gamma (t_1)=x, \quad \gamma (t_2)=y.
$$
Then we have
\begin{align} \label{E311}
\log \frac{u(y,t_2)}{u(x,t_1)}=& \int_{t_1}^{t_2} \frac{d}{dt}\left(\log u(\gamma (t),t)\right) \, dt \notag \\
=& \int_{t_1}^{t_2} \left( \frac{\partial}{\partial t} \log u +\nabla \log u \cdot \frac{\partial \gamma}{\partial t} \right ) \, dt \notag \\
\ge & \int_{t_1}^{t_2} \left(\frac{|\nabla \log u|^2}{2}-\frac{c_1}{2t} +\nabla \log u \cdot \frac{\partial \gamma}{\partial t} \right ) \, dt \notag  \quad \text{using}\, \eqref{E3100}\\
\ge & -\frac{c_1}{2}\log \left(\frac{t_2}{t_1}\right)-\frac{1}{2}\int_{t_1}^{t_2} \left|\frac{\partial \gamma}{\partial t}\right|_{g(t)}^2 \, dt.
\end{align}

Using the evolution equation of metric along Ricci flow and inequality \eqref{E303},
$$
\int_{t_1}^{t_2} \left|\frac{\partial \gamma}{\partial t}\right|_{g(t)}^2 \, dt \le (1+t_2-t_1)^{2\epsilon}\int_{t_1}^{t_2} \left|\frac{\partial \gamma}{\partial t}\right|_{g(t_1)}^2 \, dt
=(1+t_2-t_1)^{2\epsilon}\frac{d_{g(t_1)}(x,y)^2}{t_2-t_1}
$$
from the estimate \eqref{E303}.

Therefore \eqref{E311} completes the proof.
\end{proof}

\begin{rem}
We note that the proof of the above estimates does not depend on the order of decaying for the initial condition $u_0$. 
\end{rem}

\begin{thm}\label{T308} We have the following estimate.
There exist $\delta >0$ and $C>0$ such that
$$
u(x,t) \le \dfrac{C}{(1+t)^{1+\delta}}.
$$ 
\end{thm}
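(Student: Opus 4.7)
The plan is to combine the Li--Yau Harnack inequality of Theorem \ref{T307} with a conserved $L^p$ bound on $u$ and the $\kappa$-noncollapsing property of Theorem \ref{T205}. Since $\sigma > (n-2)/2 > 0$, we may fix $p > n/(2+\sigma)$ arbitrarily close to this lower bound; in particular $p > 1$. Because the AE condition is preserved under the flow (Theorem \ref{R01}), $dV_{g(t)}$ is equivalent to $dV_{g_E}$ on the end with uniform constants, so $u_0 = r^{-2-\sigma}$ lies in $L^p(M,g(0))$. A direct computation using $u_t = \Delta u$, $\partial_t dV = -R\,dV$, and the decay estimates \eqref{E305} (which justify the integration by parts) gives
$$\frac{d}{dt}\int_M u^p\,dV = -p(p-1)\int_M u^{p-2}|\nabla u|^2\,dV - \int_M R\,u^p\,dV \le 0,$$
so $\|u(t)\|_{L^p(g(t))} \le \|u_0\|_{L^p(g(0))}$ for all $t \ge 0$.

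For $t$ large, set $r_0 := \sqrt{t}/(1+t)^{\epsilon}$. Applying Theorem \ref{T307} with $t_1=t$, $t_2=2t$ to any $y$ with $d_{g(t)}(x,y) \le r_0$ yields $u(y,2t) \ge c\,u(x,t)$ for a universal constant $c > 0$, since the prefactor $2^{-c_1/2}$ and the exponential $\exp(-d_{g(t)}(x,y)^2(1+t)^{2\epsilon}/(2t)) \ge \exp(-1/2)$ are both uniformly bounded below. Raising to the $p$-th power and integrating $dV_{g(2t)}$ over the ball,
$$u(x,t)^p \cdot \Vol_{g(2t)}\!\bigl(B_{g(t)}(x,r_0)\bigr) \le c^{-p}\|u_0\|_{L^p(g(0))}^p.$$
Since $|R|\le n\epsilon/(1+s)$ on $[t,2t]$, the volume forms $dV_{g(s)}$ are mutually comparable there by a bounded factor; together with $|\text{Rm}|_{g(t)} \le \epsilon/(1+t) \le r_0^{-2}$ (valid for $t$ large) and Theorem \ref{T205}, this yields $\Vol_{g(2t)}(B_{g(t)}(x,r_0)) \ge C\kappa\, r_0^n$. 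Combining,
$$u(x,t) \le C\,t^{-n/(2p)}(1+t)^{n\epsilon/p}.$$

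For $t \ge 1$ the effective exponent is $-(n/p)(\tfrac{1}{2}-\epsilon)$, which tends to $-(2+\sigma)(\tfrac{1}{2}-\epsilon)$ as $p \searrow n/(2+\sigma)$; since $\sigma > 0$, this limit is strictly less than $-1$ once $\epsilon$ is small. Both smallness conditions can be arranged: $\epsilon$ is shrunk by an additional time translation as in the discussion preceding \eqref{E303}, and $p$ is then chosen accordingly. This produces $u(x,t) \le C(1+t)^{-1-\delta}$ for some $\delta > 0$ and all large $t$; on bounded time intervals the inequality is immediate from the maximum principle and continuity.

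The main technical obstacle is the $(1+t_2-t_1)^{2\epsilon}$ factor inside the Harnack exponential, which forces the localization radius to degrade from $\sqrt{t}$ to $\sqrt{t}/(1+t)^{\epsilon}$, losing a factor $(1+t)^{n\epsilon/p}$ in the volume lower bound. The Moser-type decay $t^{-n/(2p)}$ must still beat $1$ by a definite margin; this is precisely where $\sigma > 0$ (so that $(2+\sigma)/2 > 1$) is essential, and shrinking $\epsilon$ preserves this margin.
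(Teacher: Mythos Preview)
Your proof is correct and follows the same skeleton as the paper's: a decreasing $L^p$ norm of $u$ (for $p$ slightly above $n/(2+\sigma)$, so in particular $p\in(1,n/2)$), the Harnack inequality on a ball of radius $\sim t^{1/2-\epsilon}$, and a volume lower bound for that ball, combined to give the pointwise decay. The one genuine difference is in the volume step. The paper compares back to time $0$: using $|\text{Rm}| \le \epsilon/(1+t)$ it bounds the distortion of volumes and distances between $g(t)$ and $g(0)$ by factors of $(1+t)^{\pm\epsilon}$, and then invokes the Euclidean volume growth of the fixed AE metric $g(0)$ to get $\Vol_{g(0)}(B_{g(0)}(x,L)) \ge cL^n$ for all $x,L$. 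You instead invoke Theorem~\ref{T205} directly at time $t$, checking that $|\text{Rm}|_{g(t)} \le r_0^{-2}$ on the ball and using $\kappa$-noncollapsing to obtain $\Vol_{g(t)}(B_{g(t)}(x,r_0)) \ge \kappa r_0^n$. Your route is cleaner bookkeeping---no need to track metric distortion across widely separated times---but imports the $\mu$-functional theory of Section~3; the paper's route is more elementary, relying only on the curvature bound \eqref{E303} and the AE structure of the initial data, and is logically independent of Theorem~\ref{T205}. Both yield the same effective exponent $-(n/p)\bigl(\tfrac12-O(\epsilon)\bigr)$ and hence the same conclusion once $p$ is close to $n/(2+\sigma)$ and $\epsilon$ is small.
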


\begin{proof}
We fix a constant $p \in (\frac{n}{2+\sigma},\frac{n}{2})$, then from the decaying property \eqref{E305} $u^p$ is integrable and

\begin{align}\label{E13}
\frac{d}{dt}\left( \int u^p \, dV \right) =&  \int ( pu^{p-1}u_t-Ru^p) \, dV  \le \int pu^{p-1}\Delta u \, dV  \notag  \\ 
=&\lim_{r \to +\infty}\int_{r(x)=r}pu^{p-1}\langle\nabla u,\nabla r \rangle \, d\sigma-\lim_{r \to +\infty}\int_{r(x)\le r} p(p-1)u^{p-2}|\nabla u|^2 \, dV \notag  \\ 
= & -\int p(p-1)u^{p-2}|\nabla u|^2 \, dV \le 0 
\end{align}
where the boundary term from the integration by parts vanishes since
\begin{align}\label{S202}
|\nabla u|u^{p-1} \le Cr^{-3-\sigma+(p-1)(-2-\sigma)} \le Cr^{-1-p(2+\sigma)} <Cr^{-1-n}
\end{align}
and
\begin{align}\label{S203}
\lim_{r \to +\infty}\frac{\text{Vol}(\{r(x)=r\})}{nw_nr^{n-1}}=1
\end{align}
by our definitions of $r$ and AE manifolds. Moreover \eqref{E13} is true since $p >\frac{n}{2+\sigma} \ge 1$ by our assumption $\sigma \le n-2$.

So from \eqref{E13} there exists $c_2>0$ such that 

\begin{equation}\label{E14}
\int u^p \, dV \le c_2
\end{equation}
on any time slice.

For a fixed $x \in M^n$ and any $t \ge 1$ by using Harnack inequality Theorem \ref{T307} we have
\begin{align}\label{S203}
u^p(y,2t) \ge 2^{-c_1p/2}\exp(-p(1+t)/2t)u^p(x,t)
\end{align}
for any $y \in B_{g(t)}(x,(1+t)^{\frac{1}{2}-\epsilon})$.
Therefore,
\begin{align}\label{E15}
c_2 \ge &\int_M u^p(y,2t) \, dV_{g(2t)}(y) \ge \int_{B_{g(t)}(x,(1+t)^{\frac{1}{2}-\epsilon})} u^p(y,2t) \, dV_{g(2t)}(y) \notag \\
\ge & 2^{-c_1p/2}\exp(-p(1+t)/2t) \text{Vol}_{g(2t)}\left(B_{g(t)}(x,(1+t)^{\frac{1}{2}-\epsilon})\right) u^p(x,t) \notag \\
\ge & c_3 \text{Vol}_{g(2t)}\left(B_{g(t)}(x,(1+t)^{\frac{1}{2}-\epsilon})\right) u^p(x,t)
\end{align}
for some constant $c_3=2^{-c_1p/2}e^{-p} \le 2^{-c_1p/2}\exp(-p(1+t)/2t)$ for any $t \ge 1$.

The evolution equation for the volume of any compact set $K \subset M^n$ is
$$
\frac{d}{dt}\left( \int_K \,dV \right)=\int_K -R \, dV \ge \frac{-\epsilon}{1+t}\int_K \, dV.
$$
So we have 
\begin{equation}\label{E16}
\text{Vol}_{g(t)}(K) \ge (1+t)^{-\epsilon} \text{Vol}_{g(0)}(K).
\end{equation}

On the other hand, by the same reason 
\begin{equation}\label{E16_1}
d_{g(t)}(x,y)\le (1+t)^{\epsilon}d_{g(0)}(x,y)
\end{equation}
for any $x,y \in M^n$.

So from \eqref{E15} \eqref{E16} and \eqref{E16_1} we have
\begin{align}
c_2 \ge & c_3 \text{Vol}_{g(2t)}\left(B_{g(t)}(x,(1+t)^{\frac{1}{2}-\epsilon})\right) u^p(x,t) \notag \\
\ge & c_3 (1+2t)^{-\epsilon} \text{Vol}_{g(0)}\left(B_{g(t)}(x,(1+t)^{\frac{1}{2}-\epsilon})\right) u^p(x,t) \notag \\
\ge & c_3 (1+2t)^{-\epsilon} \text{Vol}_{g(0)}\left(B_{g(0)}(x,(1+t)^{\frac{1}{2}-2\epsilon})\right) u^p(x,t) \notag \\
\ge & c_4 (1+2t)^{-\epsilon} (1+t)^{(\frac{1}{2}-2\epsilon)n} u^p(x,t)
\end{align}
for some $c_4>0$ by the AE condition of $g(0)$.

Hence we have
\begin{equation} \label{E17}
u(x,t) \le C(1+t)^{\frac{\epsilon-(1/2-2\epsilon)n}{p}}.
\end{equation}

Then if $\epsilon$ is sufficiently small which depends on $p$ and $n$, then $\frac{\epsilon-(1/2-2\epsilon)n}{p} < -1$ and we can choose $\delta=-1-\frac{\epsilon-(1/2-2\epsilon)n}{p}>0$.

On the other hand if $t \le 1$ the conclusion is obvious since $u$ is uniformly bounded on compact time interval.
\end{proof}

With Theorem \ref{T308}, we prove the following estimate for the curvature operator.
\begin{thm} \label{T309}
$|\text{Rm}| \le \dfrac{C_0}{(1+t)^{1+\delta_0}}$
for some constants $C_0,\delta_0 >0$.
\end{thm}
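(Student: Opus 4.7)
The plan is to derive a pointwise differential inequality for $|\text{Rm}|$, linearize it with the Type III bound $|\text{Rm}|\le \epsilon/(1+t)$, and then compare the result with the heat equation solution $u$ from Theorem \ref{T308}. I would start from the standard Bochner-type inequality
\begin{equation*}
\partial_t|\text{Rm}|^2 \le \Delta|\text{Rm}|^2 - 2|\nabla\text{Rm}|^2 + C|\text{Rm}|^3,
\end{equation*}
and combine it with Kato's inequality $|\nabla|\text{Rm}|| \le |\nabla\text{Rm}|$ (after the standard regularization $\sqrt{|\text{Rm}|^2+\eta}$, $\eta\searrow 0$, to avoid the zero locus of $|\text{Rm}|$) to obtain $\partial_t|\text{Rm}| \le \Delta|\text{Rm}| + C|\text{Rm}|^2$. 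Inserting \eqref{E303} into the quadratic factor yields the linear inequality
\begin{equation*}
\partial_t|\text{Rm}| \le \Delta|\text{Rm}| + \frac{C\epsilon}{1+t}|\text{Rm}|.
\end{equation*}

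Next I would absorb the zeroth-order term by setting $\tilde w(x,t) := (1+t)^{-C\epsilon}|\text{Rm}|(x,t)$; a one-line computation gives $\partial_t \tilde w \le \Delta \tilde w$, so $\tilde w$ is a bounded subsolution of the heat equation. From the AE decay of curvature preserved by Theorem \ref{R01}, $|\text{Rm}|(x,0) \le C_1 r(x)^{-2-\sigma} = C_1 u_0(x)$, where $u_0 = r^{-2-\sigma}$ is precisely the initial datum for the function $u$ of Theorem \ref{T308}. Hence $\tilde w(x,0) \le C_1 u(x,0)$, and since $\tilde w$ and $u$ are both bounded on $M\times[0,T]$ for every $T>0$, the noncompact maximum principle Theorem \ref{T103} (applied to $\tilde w - C_1 u$ with $G\equiv 0$ and $X\equiv 0$) gives $\tilde w \le C_1 u$ on $M\times [0,\infty)$. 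Combining with Theorem \ref{T308},
\begin{equation*}
|\text{Rm}|(x,t) \le C_1(1+t)^{C\epsilon} u(x,t) \le \frac{C_0}{(1+t)^{1+\delta - C\epsilon}}.
\end{equation*}

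Finally, the parameter $\epsilon$ is still at my disposal: by a further time translation using \eqref{E302} it can be made as small as we like, while the $\delta$ produced by Theorem \ref{T308} has a positive limit as $\epsilon \to 0^+$ (namely $\tfrac{n}{2p}-1 > 0$, since $p < n/2$). Thus I can fix $\epsilon$ small enough that $\delta_0 := \delta - C\epsilon > 0$, which yields the desired estimate $|\text{Rm}| \le C_0/(1+t)^{1+\delta_0}$.

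I expect the main obstacle to be the two technical passages glossed above: the Kato-type reduction from the evolution of $|\text{Rm}|^2$ to an inequality for $|\text{Rm}|$ itself, and the verification of the exponential-growth hypothesis required by Theorem \ref{T103}. Both are standard: the first is handled by the $\sqrt{|\text{Rm}|^2+\eta}$ regularization followed by $\eta\searrow 0$, and the second is automatic because $\tilde w$ is uniformly bounded (in fact $|\text{Rm}|\le \epsilon/(1+t)$) and $u$ is a bounded positive solution of the heat equation. Everything else in the argument is a direct use of results already established earlier in the paper.
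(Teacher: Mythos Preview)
Your argument is correct and follows the same overall strategy as the paper: linearize the cubic reaction term via the Type~III bound \eqref{E303}, then compare $|\text{Rm}|$ with the heat-equation solution $u$ from Theorem~\ref{T308} by the noncompact maximum principle. The technical implementation differs slightly. The paper avoids the Kato step by working directly with the smooth quotient $W=|\text{Rm}|^2/u^2$: a short computation (recorded as Lemma~\ref{L_8}) gives $(\partial_t-\Delta)W \le \tfrac{2}{u}\nabla u\cdot\nabla W + \tfrac{16\epsilon}{1+t}W$, and Theorem~\ref{T103} is applied with the drift vector field $X=\tfrac{2}{u}\nabla u$ (bounded because $|\nabla u|/u\sim r^{-1}$), yielding $W\le C(1+t)^{16\epsilon}$ and hence $|\text{Rm}|\le C u\,(1+t)^{8\epsilon}$. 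Your route trades the drift term for the Kato/$\sqrt{|\text{Rm}|^2+\eta}$ regularization and an explicit integrating factor $(1+t)^{-C\epsilon}$, after which the comparison with $u$ is drift-free. Both approaches arrive at exactly the same endpoint $|\text{Rm}|\le C_0(1+t)^{-1-\delta+O(\epsilon)}$ and the same choice of small $\epsilon$; neither is materially simpler or more general than the other.
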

\begin{proof}
Under Ricci flow, we have the following lemma by direct computations.
\begin{lem}\label{L_8}
Let T be a time-dependent tensor on $M$ and $u$ is a positive solution of $\partial_tu=\Delta u$, then
$$
(\partial_t-\Delta)\frac{|T|^2}{u^2}=\frac{2}{u}\nabla u \cdot \nabla\frac{|T|^2}{u^2}-2\frac{|u\nabla T-\nabla u T|^2}{u^4}+\frac{(\partial_t-\Delta)|T|^2}{u^2}.
$$
\end{lem}
Let $W=\dfrac{|\text{Rm}|^2}{u^2}$, then from the Lemma \ref{L_8} we have
\begin{align}
\partial_tW = & \Delta W+\frac{2}{u}\nabla u \cdot \nabla W-2\frac{|u\nabla \text{Rm}-\nabla u \text{Rm}|^2}{u^4}+P \notag \\
\le & \Delta W+\frac{2}{u}\nabla u \cdot \nabla W+P,
\end{align}
where
$$
P=\frac{8(B_{ijkl}+B_{ikjl})R_{ijkl}}{u^2} \quad \text{and} \quad B_{ijkl}=-R_{pijq}R_{qlkp}.
$$

We have the following estimate for $P$.
\begin{equation}\label{E19}
P\le \frac{16|\text{Rm}|^3}{u^2} \le \frac{16\epsilon}{1+t}W
\end{equation}
where the last inequality is from \eqref{E303}.

As in the proof of Theorem \ref{R01}, $\frac{2}{u}\nabla u$ is bounded on $M^n \times [0,T]$ for any $T>0$. From Theorem \ref{T103} we conclude that

\begin{equation}\label{E21}
W=\frac{|\text Rm|^2}{u^2}\le C (1+t)^{16 \epsilon}
\end{equation}
for some constant $C>0$.

Therefore, from Theorem \ref{T308} we know that 
there exists $C_0>0$ such that
\begin{equation} \label{E18}
|\text{Rm}| \le C_0 u(1+t)^{8 \epsilon} \le \frac{C_0}{(1+t)^{1+\delta-8\epsilon}}
\end{equation}
where we can take $\delta_0=\delta-8\epsilon >0$ by choosing $\epsilon$ to be small enough.
\end{proof}

Now from the proof of Theorem \ref{T308}, we know that for any $\sigma_0$ slightly smaller than $\sigma$, 
$$
u(x,t)\le Ct^{-1-\sigma_0/2}
$$
Therefore, $|\text{Rm}|\le C t^{-1-\sigma_0/2}$. In other words, we have shown $\delta_0$ can be chosen to be any number less than $\sigma /2$.

We have the following version of Shi's estimate, see also \cite{Shi89}, 
\begin{thm} \label{ST001}
For any $k=0,1,\ldots$ 
$$
|\nabla ^k \text{Rm}|\le  C_kt^{-1-\delta_0-k/2}.
$$
\end{thm}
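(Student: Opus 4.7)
The plan is to leverage the sharp curvature decay $|\text{Rm}|\le C_0(1+t)^{-1-\delta_0}$ from Theorem \ref{T309} via a parabolic rescaling, thereby reducing the higher derivative estimate to the standard short-time Shi estimate on a unit length time interval. For each sufficiently large $t_0>0$, introduce the rescaled Ricci flow
$$
\tilde g(s)=t_0^{-1}g(t_0 s),\qquad s\in[1/2,1],
$$
which is again a complete Ricci flow on $M$. The scaling behaviour of curvature together with Theorem \ref{T309} gives
$$
|\tilde{\text{Rm}}|_{\tilde g}(x,s)=t_0\,|\text{Rm}|_g(x,t_0 s)\le C\,t_0(1+t_0s)^{-1-\delta_0}\le C\,t_0^{-\delta_0}=:\tilde K
$$
on $M\times[1/2,1]$, so the rescaled curvature is not merely bounded but in fact tends to zero as $t_0\to\infty$.

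Next I would apply Shi's classical derivative estimate for complete noncompact Ricci flows (\cite{Shi89}): if $|\text{Rm}|\le K$ on $M\times[0,T]$, then $|\nabla^k\text{Rm}|(x,t)\le C_kK\,t^{-k/2}$ for $t\in(0,\min\{T,\alpha/K\}]$, with $C_k$ and $\alpha$ depending only on $n,k$. Applied to the time-shifted flow $\tilde g(1/2+\cdot)$ with $T=1/2$ and $K=\tilde K$, the constraint $T\le\alpha/\tilde K$ is satisfied as soon as $t_0$ is large enough. Evaluating at the final time (which corresponds to the original time $t_0$), one obtains
$$
|\tilde\nabla^k\tilde{\text{Rm}}|_{\tilde g}(x,1)\le C_k\tilde K\,(1/2)^{-k/2}\le C_k\,t_0^{-\delta_0}.
$$

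Undoing the rescaling through the parabolic scaling relation $|\tilde\nabla^k\tilde{\text{Rm}}|_{\tilde g}=t_0^{1+k/2}|\nabla^k\text{Rm}|_g$ then yields the desired bound
$$
|\nabla^k\text{Rm}|_g(x,t_0)\le C_k\,t_0^{-1-\delta_0-k/2}.
$$
For bounded $t_0$, say $t_0\in(0,1]$, the estimate follows directly from Shi's estimate applied on a short initial slab, using that the AE metric $g(0)$ has curvature derivatives decaying in space and in particular uniformly bounded. The main technical point is verifying the applicability of Shi's noncompact estimate at each rescaling step, which requires a uniform curvature bound on a slab $M\times[s_0,s_1]$ of the rescaled flow; this is exactly what Theorem \ref{T309} provides, together with the preservation of the AE structure (Theorem \ref{R01}) that ensures the completeness and bounded geometry needed to run Shi's maximum-principle argument on the noncompact manifold $M$.
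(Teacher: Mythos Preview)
Your proof is correct and follows a genuinely different route from the paper's. The paper argues by induction on $k$: for each fixed $s\ge 1$ it builds the auxiliary quantity
\[
F(x,t)=(t-s)^k|\nabla^k\text{Rm}|^2+C_1(t-s)^{k-1}|\nabla^{k-1}\text{Rm}|^2+\cdots+C_k|\text{Rm}|^2,
\]
derives an evolution inequality of the form $\partial_t F\le \Delta F+Ct^{-2-2\delta_0}(F^{1/2}+t^{1+\delta_0}F)$, and then applies the noncompact maximum principle (Theorem~\ref{T103}) together with an ODE comparison to bound $F(2s)$ by $Cs^{-2-2\delta_0}$. In effect the paper re-derives a Shi-type estimate tailored to the decay rate $t^{-1-\delta_0}$.

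Your rescaling argument sidesteps all of this by transporting the problem to a unit-length time slab where the curvature bound $\tilde K=Ct_0^{-\delta_0}$ is small, invoking the classical Shi estimate as a black box, and then undoing the scaling. This is cleaner and conceptually more transparent; it also makes the role of Theorem~\ref{T309} as the only nontrivial input completely explicit. The paper's approach, by contrast, is more self-contained (it relies only on Theorem~\ref{T103} rather than on the full machinery of Shi's global estimate), and the explicit $F$-function makes the induction structure visible. Both are standard devices in the Ricci flow literature; yours is the more economical one here.
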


\begin{proof}
From the Theorem \ref{T309} the conclusion is true for $k=0$. We assume by induction that it holds for any $0 \le l <k$.\\
For any fixed $s \ge 1$, we let 
$$
F(x,t)=(t-s)^k|\nabla^k \text{Rm}|^2+C_1(t-s)^{k-1}|\nabla^{k-1} \text{Rm}|^2 +\cdots+C_k|\text{Rm}|^2
$$
on $M \times [s,\infty)$.
From the evolution equation of $|\nabla^k \text{Rm}|^2$
\begin{align}
\partial_t|\nabla^k\text{Rm}|^2&=\Delta|\nabla^k\text{Rm}|^2-2|\nabla^{k+1}\text{Rm}|^2+\sum_{l=0}^k\nabla^l\text{Rm}*\nabla^{k-l}\text{Rm}*\nabla^k\text{Rm} \notag \\
& \le \Delta|\nabla^k\text{Rm}|^2-2|\nabla^{k+1}\text{Rm}|^2+C\sum_{l=0}^k|\nabla^l\text{Rm}||\nabla^{k-l}\text{Rm}||\nabla^k\text{Rm}|
\end{align}
we have by the induction,
$$
(t-s)^{k}|\nabla^l\text{Rm}||\nabla^{k-l}\text{Rm}||\nabla^k\text{Rm}| \le Ct^{-2-2\delta_0}(t-s)^{k/2}|\nabla^k \text{Rm}| \le Ct^{-2-2\delta_0}F^{1/2}
$$
for $0<l<k$ and 
$$
(t-s)^{k}|\nabla^l\text{Rm}||\nabla^{k-l}\text{Rm}||\nabla^k\text{Rm}|=(t-s)^{k}|\text{Rm}||\nabla^k\text{Rm}|^2 \le Ct^{-1-\delta_0}F
$$
for $l=0$ or $l=k$.

Therefore, we can find nonnegative constants $C_1,C_2,\ldots, C_k$, such that $F$ satisfies the following equation
\begin{equation}\label{E19}
\partial_t F \le \Delta F+Ct^{-2-2\delta_0}(F^{1/2}+t^{1+\delta_0}F).
\end{equation}

We consider the ODE
\begin{align}
\frac{d\phi}{dt}&=Ct^{-2-2\delta_0}(\phi^{1/2}+t^{1+\delta_0}\phi), \notag \\ 
\phi(s)&=\bar C s^{-2-2\delta_0}
\end{align}
where $\bar C=C_kC_0^2$.
Now $F(x,s) \le \phi(s)$ since $F(s)=C_k|\text{Rm}|^2 \le \bar Cs^{-2-2\delta_0}$.

Since $\phi(t)$ is increasing, $\phi(t) \ge \bar Cs^{-2-2\delta_0} \ge \bar Ct^{-2-2\delta_0}$ for $t \ge s$ and hence
\begin{align}
\frac{d\phi}{dt} =&Ct^{-2-2\delta_0}\phi^{1/2}+Ct^{-1-\delta_0}\phi \\ \notag
\le& Ct^{-1-\delta_0}\phi.
\end{align}
Then it is easy to show $\phi(t) \le Cs^{-2-2\delta_0}e^{Ct^{-\delta_0}}\le Cs^{-2-2\delta_0}$  for $t \ge s \ge 1$.

Now from Theorem \ref{T103}, we conclude that 
$$
F(2s) \le Cs^{-2-2\delta_0}.
$$
In other words,
$$
s^k|\nabla^k\text{Rm}|^2(2s) \le C_ks^{-2-2\delta_0}.
$$
Since $s$ is an arbitrary positive number, we have 
$$
|\nabla^k\text{Rm}|(t) \le Ct^{-1-\delta_0-k/2}
$$
which completes the induction process.
\end{proof}

Thus there exists a metric $g_{\infty}$ such that $g(t)$ converges to $g_{\infty}$ smoothly as $t \to \infty$. Moreover, argue as before
$$
\mu(g_{\infty},\tau) \ge \limsup_{t \to \infty} \mu(g(t),\tau) \ge  \limsup_{t \to \infty}\mu(g(0),\tau+t)=0
$$ for any $\tau >0$.

Then from Theorem \ref{T201} $(M^n,g_{\infty})=(\mathbb R^n,g_{E})$. In particular, $M^n$ is diffeomorphic to $\mathbb R^n$.

\section{Proof of Theorem \ref{T101}}

In this section, we prove our first main theorem.

We first recall the definition of weighted function space, see for example \cite{LP87}. Let $(M,g)$ be an AE manifold with the AE end $E$, the weighted space $C_{\beta}^k(E)$ consists of $C^k$ functions $u$ for which the norm 
$$
\|u\|_{C_{\beta}^k}=\sum_{i=0}^k \sup_M r^{-\beta+i}|\nabla^iu|
$$
is finite.  The weighted H\"older space $C^{k,\alpha}_{\beta}(E)$ is defined for $0<\alpha<1$ as the set of $u \in C_{\beta}^k(E)$ for which the norm
$$
\|u\|_{C^{k,\alpha}_{\beta}}=\|u\|_{C_{\beta}^k}+\sup_{x,y}\left(\min \{r(x),r(y)\}\right)^{-\beta+k+\alpha}\frac{|\nabla^ku(x)-\nabla^ku(y)|}{|x-y|^{\alpha}}
$$
is finite.

Then we have the following convergence result in the weighted space.

\begin{thm} \label{T_10}
For any $\sigma' \in \left(\frac{n-2}{2}, \sigma \right)$, we have $g_{ij}(t)$ converges to $g_{ij}(\infty)$ in $C_{-\sigma'}^{\infty}$ as $t \to \infty$. In particular, $(g_{ij}(\infty),E)$ is an AE coordinate system on $M^n$.
\end{thm}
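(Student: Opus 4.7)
The plan is to combine two curvature estimates already in hand—the spatial decay of Theorem \ref{R01} and the temporal decay of Theorem \ref{ST001}—by interpolation, and then integrate $\partial_t g=-2\,\text{Rc}$ from $t$ to $\infty$ in the fixed AE chart to push the convergence into the weighted norm.

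\textbf{Step 1 (uniform spatial decay).} I first upgrade Theorem \ref{R01} to give
\[
|\nabla^k\text{Rm}|(x,t)\;\le\;C_k\,r(x)^{-2-k-\sigma}\qquad\text{for all }(x,t)\in M\times[0,\infty),
\]
with $C_k$ independent of $t$. The argument is exactly the one in Theorem \ref{R01} applied to $w_k=r^{4+2\sigma+2k}|\nabla^k\text{Rm}|^2$, with the modification that the ambient bound $|\text{Rm}|\le S$ is replaced by the integrable bound $|\text{Rm}|(t)\le C(1+t)^{-1-\delta_0}$ from Theorem \ref{ST001}. In the resulting inequality $(\partial_t-\Delta)w_k\le-2\nabla\log h_k\cdot\nabla w_k+A(t)w_k+B(t)w_k^{1/2}$ the coefficient $A(t)$ is $C(1+t)^{-1-\delta_0}$, and $B(t)$ is likewise integrable by induction together with the temporal decay of the lower-order $|\nabla^l\text{Rm}|$. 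The associated ODE has a bounded solution on $[0,\infty)$, and Theorem \ref{T103} yields the claim.

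\textbf{Step 2 (interpolation).} Combining Step 1 with Theorem \ref{ST001} gives, for every $\theta\in[0,1]$,
\[
|\nabla^k\text{Rm}|(x,t)\;\le\;C_k\,r(x)^{-(2+k+\sigma)(1-\theta)}\,(1+t)^{-(1+\delta_0+k/2)\theta}.
\]

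\textbf{Step 3 (weighted convergence).} In the fixed AE chart $\Phi$, $\partial_tg_{ij}(t)=-2R_{ij}(t)$, so
\[
g_{ij}(t)-g_{ij}(\infty)=-2\int_t^{\infty}R_{ij}(s)\,ds.
\]
For each $i\ge 0$ I pick $\theta_i$ in the interval $\bigl(\,(1+\delta_0+i/2)^{-1},\;(2+\sigma-\sigma')/(2+i+\sigma)\,\bigr)$ and apply Step 2; translating coordinate derivatives to covariant derivatives costs only lower-order $r^{-1-\sigma}$-Christoffel terms (by Step 1), which are absorbed in the iteration. This gives
\[
r(x)^{\sigma'+i}\,\bigl|\nabla^i(g(t)-g(\infty))\bigr|(x)\;\le\;C\,(1+t)^{\,1-(1+\delta_0+i/2)\theta_i}\;\xrightarrow[t\to\infty]{}\;0,
\]
with the spatial exponent nonpositive by the choice of $\theta_i$. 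A direct computation shows the $\theta_i$-interval is nonempty for every $i$ provided $\sigma'<\sigma$ and $\delta_0$ is chosen close enough to $\sigma/2$ (permitted by the remark following Theorem \ref{T309}); the binding constraint is $i=0$, namely $\sigma'<(2+\sigma)\delta_0/(1+\delta_0)$, whose right side tends to $\sigma$ as $\delta_0\nearrow\sigma/2$. Hence $g(t)\to g(\infty)$ in $C^\infty_{-\sigma'}$.

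\textbf{Step 4 (AE structure of the limit).} Writing $g_{ij}(\infty)-\delta_{ij}=(g_{ij}(\infty)-g_{ij}(0))+(g_{ij}(0)-\delta_{ij})$ and splitting $\int_0^\infty\text{Rc}(s)\,ds=\int_0^1+\int_1^\infty$ (the first integral bounded using Step 1, the tail using the interpolation) yields $|\nabla^i(g_{ij}(\infty)-\delta_{ij})|=O(r^{-\sigma'-i})$ for every $i$. Thus $(E,\Phi)$ is an AE chart for $g(\infty)$ of order $\sigma'$.

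\textbf{Main obstacle.} Step 1 is the real hurdle. The maximum-principle argument of Theorem \ref{R01} is routine on a finite time interval because it only needs a curvature bound there, but closing it on $[0,\infty)$ requires simultaneously tracking the integrable temporal decay of $|\text{Rm}|$ and of each lower-order $|\nabla^l\text{Rm}|$, so that every coefficient in the evolution inequality for $w_k$ is integrable in $t$ and Gronwall produces a finite bound over the whole half-line.
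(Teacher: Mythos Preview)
Your interpolation strategy in Steps 2--4 is sound and, granted Step 1, would indeed deliver the weighted convergence; the paper's Lemma \ref{L405} produces essentially the same combined space--time bound $|\nabla^k\text{Rm}|\le C_k\,t^{-1-\eta_k}r^{-k-\sigma'}$ that your interpolation would give. The problem is that Step 1, as written, does not go through.

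The evolution inequality for $w_k=h_k|\nabla^k\text{Rm}|^2$ with $h_k=r^{4+2\sigma+2k}$ contains, in addition to the gradient term and the cubic curvature terms, the zero-order term $B_kw_k$ with
\[
B_k=\frac{2|\nabla h_k|^2-h_k\Delta h_k}{h_k^2},
\]
which comes purely from differentiating the weight; see \eqref{E2011}. The estimate \eqref{E201h} gives only $|B_k|\le Cr^{-2}$. Since $r$ has a positive minimum and $g(t)$ converges to a fixed metric as $t\to\infty$, this coefficient is bounded by a positive constant but does \emph{not} decay in $t$. Replacing the ambient bound $|\text{Rm}|\le S$ by the integrable bound $|\text{Rm}|\le C(1+t)^{-1-\delta_0}$ handles the cubic curvature contribution, but has no effect whatsoever on $B_k$. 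Consequently your coefficient $A(t)$ is not $C(1+t)^{-1-\delta_0}$ but rather $C_0+C_1(1+t)^{-1-\delta_0}$ with $C_0>0$; the comparison ODE then grows like $e^{C_0t}$, and Theorem \ref{T103} yields no uniform bound on $[0,\infty)$.

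The paper's remedy is to localize: the maximum-principle argument is run only on the space--time region $D_k=\{r(x)\ge t^{a_k}\}$ with $a_k$ slightly larger than $\tfrac12$, on which $|B_k|\le Cr^{-2}\le Ct^{-2a_k}$ \emph{is} integrable in $t$, while the parabolic boundary $\{r=t^{a_k}\}$ is controlled using the temporal decay of Theorem \ref{ST001}. Outside $D_k$ the pure temporal decay already suffices. This space--time splitting is precisely the idea missing from your Step 1; it is what converts the non-integrable weight term into an integrable one.
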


\begin{proof}
We first prove a lemma 
\begin{lem} \label{L405}
There exist $C_k,\eta_k >0$ such that
$$
|\nabla^k \text{Rm}|(x,t)\le C_k t^{-1-\eta_k}r^{-k-\sigma'}, \quad k=0,1,\ldots
$$
for all $(x,t) \in M \times [0,\infty)$.
\end{lem}

\emph{Proof of the lemma}:
We choose $\sigma_1,\sigma_0$ such that $\sigma'<\sigma_1<\sigma_0 <\sigma$ and $\delta_0=\sigma_0/2$ in Theorem \ref{ST001}. 

We consider a domain $D_k=\{(x,t) \in M \times [0,\infty) \, |\, r(x) \ge t^{a_k}\}$ in the spacetime where $a_k>1/2$ to be determined later.

For $(x,t) \notin D_k$, from Theorem \ref{ST001}, we have 
\begin{equation} \label{E4071}
|\nabla^k \text{Rm}|\le C_kt^{-1-\sigma_0/2-k/2} \le C_kt^{-1-\eta_k}r^{-k-\sigma'}
\end{equation}
for some $\eta_k>0$ when $a_k$ is sufficiently close to $1/2$.

For $(x,t)\in D_k$, we have the following estimate.

\emph{Claim}: $|\nabla^k\text{Rm}|^2 \le Cr^{-4-2\sigma_1-2k}$ on $D_k$.

\emph{Proof of the claim}:
Let $h_k=r^{4+2\sigma_1+2k}$ and $w_k=h_k|\nabla^k\text{Rm}|^2$, from \eqref{E201a} we have
\begin{equation} \label{RE502}
(\partial_t-\Delta)w_k \le B_kw_k-2\nabla\log h_k\nabla w_k+C\sum_{l=0}^kh_k|\nabla^l\text{Rm}||\nabla^{k-l}\text{Rm}||\nabla^k\text{Rm}|
\end{equation}
where $B_k=\frac{2|\nabla h_k|^2-h_k\Delta h_k}{h_k^2}$ is uniformly bounded by $r^{-2} \le t^{-2a_k}$.

For $k=0$, we have
$$
(\partial_t-\Delta)w_0 \le -2\nabla\log h_0\nabla w_0+Ct^{-1-\delta_0'}w_0
$$
for some $\delta_0'=\min\{2a_0-1,\sigma_0/2\}>0$.

Moreover, on $\partial D_0$ we have
\begin{equation}
|\text{Rm}|\le Ct^{-1-\sigma_0/2} = C r^{-(1+\sigma_0/2)/a_0} \le Cr^{-2-\sigma_1}
\end{equation}
for $a_0$ sufficiently close to $1/2$.

Now we apply Theorem \ref{T103} on $D_k$ to conclude that the claim holds for $k=0$. Note that even though in Theorem \ref{T103} there is no boundary in spacetime for $t >0$, if we go through the proof, see \cite[Theorem $12.14$]{CCGGIIKLLN10}, the contradiction is derived at an interior point as long as the conclusion holds also on the boundary.

Now we assume that the claim holds for all $0 \le l <k$, then by induction on $D_k$ we have
$$
h_k|\nabla^l\text{Rm}||\nabla^{k-l}\text{Rm}||\nabla^k\text{Rm}| =h_k|\text{Rm}||\nabla^k\text{Rm}|^2 \le t^{-1-\sigma_0/2}w_k
$$
for $l=0$ or $l=k$ and 
$$
h_k|\nabla^l\text{Rm}||\nabla^{k-l}\text{Rm}||\nabla^k\text{Rm}| \le Cr^k|\nabla^k\text{Rm}|=Cr^{-\sigma_1-2}w_k^{1/2} \le Ct^{-a_k\sigma_1-2a_k}w_k^{1/2}
$$
for $0<l<k$.\\
Therefore from \eqref{RE502} we have
$$
(\partial_t-\Delta)w_k \le -2\nabla\log h_k\nabla w_k+Ct^{-1-\delta_k'}(w_k+w_k^{1/2})
$$
for some $\delta_k'>0$.

On the other hand, on $\partial D_k$ we have by Shi's estimate
\begin{equation}
|\nabla^k \text{Rm}|\le C_k t^{-k/2}t^{-1-\sigma_0/2} = C_k r^{-(1+k/2+\sigma_0/2)/a_k} \le C_k r^{-2-k-\sigma_1}
\end{equation}
when $a_k$ is chosen to be sufficiently close to $1/2$.

So from maximum principle, we conclude that $w_k$ is uniformly bounded on $D_k$ and the claim holds for $k$ as well.

Therefore, on $D_k$ we have
$$
|\nabla ^k \text{Rm}| \le C_k r^{-2-k-\sigma_1} \le C_k t^{-1-\eta_k}r^{-k-\sigma'}
$$
for some $\eta_k>0$ and $a_k$ close to $1/2$.

Thus the proof of lemma is complete.

With the same argument in Theorem \ref{R01}, we conclude that $g_{ij}(t)$ converges to $g_{ij}(\infty)$ in $C_{-\sigma'}^{\infty}$ because the term $t^{-1-\eta_k}$ guarantees that $|\nabla^k \text{Rm}|$ is integrable with respect to time at infinity. In other words, $g_{ij}(\infty)$ is an AE coordinate system with a smaller order $\sigma'$ for the Euclidean space.
\end{proof}
Now we continue to prove Theorem \ref{T101}. We choose a smooth function $\eta$ such that $\eta=0$ outside of the AE end $E$ and $\eta=1$ when $r$ is large.

Let $\chi(t)=(\partial_ig_{ij}(t)-\partial_jg_{ii}(t))\partial_j$ be a vector field on the AE end, by the definition of mass,
\begin{align} \label{E411}
m(g(t))=& \lim_{r \to \infty} \int_{S^r} \chi(t)\lrcorner \, dV_{g_E} \notag \\
=&  \lim_{r \to \infty} \int_{S^r} \eta\chi(t)\lrcorner \, dV_{g_E} \notag \\
=& \int \eta \text{div}(\chi(t))+\left <\chi(t),\nabla \eta \right> \, dV.
\end{align}

On the other hand, we have, see \cite[$(9.2)$]{LP87},
\begin{align}
R= &g^{jk}(\partial_i \Gamma_{jk}^i-\partial_k \Gamma_{ij}^i+\Gamma_{il}^i\Gamma_{jk}^l-\Gamma_{kl}^i\Gamma_{ij}^l) \notag \\
 =& \partial_j(\partial_ig_{ij}-\partial_j g_{ii})+E(g)
\end{align}
where $E(g)$ is some universal analytic expression that is polynomial in $g$, $\partial g$ and $\partial^2g$ such that $E=O(r^{-2\sigma'-2})$. Moreover,
$$
|E(g(t))-E(g(\infty))| \le C\|g(t)-g(\infty)\|_{C_{-\sigma'}^2}r^{-2\sigma'-2}.
$$

By taking the difference of equations of $R(t)$ and $R(\infty)=0$, we have
\begin{align}\label{E412a}
R(t)=& \partial_j(\partial_ig_{ij}(t)-\partial_j g_{ii}(t))- \partial_j(\partial_ig_{ij}(\infty)-\partial_j g_{ii}(\infty))+E(g(t))-E(g(\infty)) \\ \notag
=&\text{div}\chi(t)-\text{div}\chi(\infty)+E(g(t))-E(g(\infty))
\end{align}
and hence
\begin{align}\label{E412}
|\text{div}\chi(t)-\text{div}\chi(\infty)-R(t)| \le C\|g(t)-g(\infty)\|_{C_{-\sigma'}^2}r^{-2\sigma'-2}.
\end{align}
From \eqref{E411} and \eqref{E412} we have
\begin{align} \label{E413}
m(g(0))=& \lim_{t \to \infty} m(g(t))=\lim_{t \to \infty}m(g(t))-m(g(\infty)) \notag \\
=& \lim_{t \to \infty} \int \eta (\text{div}\chi(t)-\text{div}\chi(\infty))+\left <\chi(t)-\chi(\infty),\nabla \eta \right> \, dV_{g_E} \notag \\
\ge & \lim_{t \to \infty} \int \eta R(t)-C\eta \|g(t)-g(\infty)\|_{C_{-\sigma'}^2}r^{-2\sigma'-2}+\left <\chi(t)-\chi(\infty),\nabla \eta \right> \, dV_{g_E}.
\end{align}

Now since $\sigma'>\frac{n-2}{2}$, $\eta r^{-2\sigma'-2}$ is integrable. In addition, $\chi(t)-\chi(\infty)$ converges to $0$ on the support of $\nabla \eta$ and $\|g(t)-g(\infty)\|_{C_{-\sigma'}^2}$ tends to $0$, so we have from \eqref{E413},
\begin{equation} \label{E414}
m(g(0))\ge  \lim_{t \to \infty} \int \eta R(t) \, dV_{g_E} \ge 0.
\end{equation}

\begin{rem}
From the above proof, we have shown
\begin{equation} \label{E415}
m(g(0)) = \lim_{t \to \infty} \int R(t) \, dV_t
\end{equation}
since $g(t)$ converges to $g_E$ uniformly on any compact set.
\end{rem}

If the equality holds, we have by \eqref{E415} $\lim_{t \to \infty} \int R(t) \, dV_t=0$.

On the other hand 
\begin{align}
\frac{d}{dt}\left(\int R \, dV\right)= &\int \Delta R+2|Rc|^2-R^2 \, dV \notag \\
= &\int 2|Rc|^2-R^2 \, dV \notag \\
\ge & -\frac{n-2}{n}\int{R^2} \, dV \notag \quad (\text{from} \,\,|Rc|^2\ge \frac{R^2}{n})\\
\ge & -\frac{C}{(1+t)^{1+\delta}}\int R \, dV
\end{align}
where the second inequlity holds since $\lim_{r \to \infty} \int_{S_r}|\nabla R(t)|\,d\sigma=0$ and hence $\int \Delta R \, dV=0$. The last inequality follows from Theorem \ref{T309}.

Taking the integration on both sides, $\lim_{t \to \infty} \int R(t) \, dV_t$ cannot be $0$ unless $R(t)\equiv 0$, which is a contradiction by our original assumptions. In other words, the only possibility for $m(g(0))=0$ is when $(M^n,g)=(\mathbb R^n, g_E)$.

Thus, we have completed the proof of Theorem \ref{T101}.

\section {Ricci flow with surgery on AE manifold}
In this section, we define the Ricci flow with surgery on an AE manifold. Most definitions and notations are from \cite{P031} \cite{MT07} \cite{BBM11} and \cite{KL08} with slight modifications. We assume from now on $M$ is an orientable Riemannian AE $3$-manifold with $R > 0$ unless otherwise specified.

First of all we fix a surgery model, see \cite[Section $2$]{P031} and \cite[Chapter $12$]{MT07},
\begin{defn}(surgery model)\label{E501}
Consider $M_{\text{stan}}=\mathbb R^3$ with its natural $SO(3)$-action, then there is a complete metric $g_{\text{stan}}$ on $M_{\text{stan}}$ such that
\begin{enumerate}

\item $g_{\text{stan}}$ is $SO(3)$-invariant.

\item $g_{\text{stan}}$ has nonnegative sectional curvature.

\item There is a compact ball $B \subset M_{\text{stan}}$ so that the restriction of the metric $g_{\text{stan}}$ to the complement of this ball is isometric to the product $(S^2, h) \times (\mathbb R^+,ds^2)$ where $h$ is the round metric of scalar curvature $1$ on $S^2$.

\item There is a standard Ricci flow $(M_{\text{stan}},g_{\text{stan}}(t)), 0\le t <1$ such that $1$ is the singular time.

\end{enumerate}
\end{defn}

For an AE manifold $M^3$, under Ricci flow, we either have long time existence or the metric goes singular at some finite time. In the latter case, we modify the resulting limit by surgery, which cuts off high curvature parts and add standard capped tubes, so as to produce a new manfiold with an AE end which serves a new initial condition for Ricci flow. 
Now we clarify the process of surgery at the first singular time for example. 

Let $(M,g(t)), 0 \le t <T$ be the Ricci flow solution where $T$ is the first singular time. Let $\Omega \subset M$ be a subset defined by 
$$
\Omega=\{x\in M | \limsup_{t \to T} R_g(x,t) < \infty\}.
$$
Then we have the following properties:
\begin{thm} \label {T501}
\begin{enumerate}

\item As $t \to T$ the metric $g(t)|_{\Omega}$ limit to $g(T)$ uniformly in the $C^{\infty}$-topology on every compact sets of $\Omega$.

\item Every end of a connected component of $\Omega$ is contained in a strong $\epsilon$-tube.

\item There exists $r>0$ such that any $x\in \Omega \times \{T\}$ with $R(x) \ge r^{-2}$ has a strong $(C,\epsilon)$-canonical neighborhood in $\widehat{\mathcal M}=M \times [0,T) \cup_{\Omega\times [0,T)}(\Omega\times [0,T])$.

\item There exists a compact set $K \subset M$ such that $|\text{Rm}|$ is bounded on $K^c \times [0,T)$. In particular, $K^c \subset \Omega$.

\item The scalar curvature $R(g(T))$ is a proper function from $\Omega \to (0,\infty)$.
\end{enumerate}
\end{thm}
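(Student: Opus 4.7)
The plan is to adapt Perelman's analysis of the first singular time for compact $3$-manifolds (Chapter 12 of Morgan--Tian, Section 2 of \cite{P031}) to the AE setting. The main new input is item $(iv)$, which controls the geometry near spatial infinity uniformly in $t \in [0,T)$; once $(iv)$ is in hand, the remaining items follow by the standard Perelman machinery combined with the $\kappa$-noncollapsing established in Theorem \ref{T205}.

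For item $(iv)$, I would exploit asymptotic flatness together with Perelman's pseudolocality theorem. Since $g(0)$ is AE of order $\sigma>0$, for any $\delta>0$ there exists $r_0=r_0(\delta, T)$ so that on $\{r(x)\ge r_0\}$ the initial metric is $\delta$-close to Euclidean on balls of radius $r(x)/2$; pseudolocality then propagates this near-flatness to give a uniform curvature bound on $\{r(x)\ge r_0\}\times[0,T)$ provided $r_0$ is chosen large enough relative to $T$. An alternative route, closer in spirit to the proof of Theorem \ref{R01}, is to apply the weighted maximum principle to $w=h|\text{Rm}|^2$ with $h=r^{4+2\sigma'}$ for some $\sigma'<\sigma$, using that $|\text{Rm}|$ is bounded on compact subsets of $M\times[0,T)$ together with the AE decay of $|\text{Rm}|(0)$, to conclude $|\text{Rm}|(x,t)\le C\, r(x)^{-2-\sigma'}$ on $\{r(x)\ge r_0\}\times[0,T)$. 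Taking $K=\overline{\{r(x)\le r_0\}}$ then yields $(iv)$.

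Granted $(iv)$, items $(i)$, $(iii)$, $(v)$ follow from standard arguments. For $(i)$: if $x\in\Omega$ then $\limsup_{t\to T}R(x,t)<\infty$ by definition, and Hamilton--Ivey pinching in dimension three (applicable since $R\ge 0$ initially) upgrades this to $\limsup_{t\to T}|\text{Rm}|(x,t)<\infty$; a distance-distortion argument extends the bound to a spacetime neighborhood of $x$, and Shi's estimates together with Arzel\`a--Ascoli yield $C^\infty_{\text{loc}}$ convergence on $\Omega$. For $(iii)$, this is Perelman's canonical neighborhood theorem, whose two hypotheses---$\kappa$-noncollapsing and the dimension-three classification of $\kappa$-solutions---are available by Theorem \ref{T205} and the classical three-dimensional theory. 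Item $(ii)$ is a consequence of $(iii)$: near a non-AE end of a component of $\Omega$ (i.e.\ one along which $R$ blows up), every point admits a canonical neighborhood, and the standard combinatorial gluing organizes these neighborhoods into a strong $\epsilon$-tube. Finally, for $(v)$, given $0<a<b$, the upper bound $R\le b$ together with $(iii)$ keeps $R^{-1}([a,b])$ bounded away from the singular set $M\setminus\Omega$, while the lower bound $R\ge a$ combined with the decay $R(g(T))\to 0$ at the AE end confines the set to a compact subset of $M$, so $R^{-1}([a,b])$ is compact in $\Omega$.

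The main obstacle will be item $(iv)$: making pseudolocality (or the weighted maximum principle) produce uniform curvature control on the full time interval $[0,T)$ at the AE end, since the required initial closeness to Euclidean geometry must be quantified in terms of $T$. Once the infinity of the manifold is tamed this way, the rest of the theorem is essentially a transcription of Perelman's singular-time analysis to our setting, since all ingredients ($\kappa$-noncollapsing, Hamilton--Ivey pinching, the canonical neighborhood theorem) are available.
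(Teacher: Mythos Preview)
Your outline is correct and follows the paper's route: items (i)--(iii) are cited from \cite[Theorem 11.19]{MT07}, item (iv) from the pseudolocality theorem \cite[Theorem 1.1]{CTY11}, and item (v) is deduced from the AE decay at infinity (packaged in the paper as Lemma \ref{L501}) together with a local regularity statement (the paper invokes Lemma \ref{L601} rather than (iii) directly, but your route via the derivative estimates in the canonical neighborhood theorem works as well). One small correction: Theorem \ref{T205} assumes the flow is immortal, whereas here we are on $[0,T)$ with $T<\infty$ the first singular time, so for the $\kappa$-noncollapsing input to the canonical neighborhood theorem you should invoke Perelman's standard finite-time noncollapsing instead (which is in any case what underlies \cite[Theorem 11.19]{MT07}).
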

\begin{proof}
The proof of $1-3$ can be found in \cite[Theorem $11.19$]{MT07}. $4$ is proved by pseudolocality, see \cite[Theorem $1.1$]{CTY11}.
To prove $5$, we need the following lemma.

\begin{lem} \label{L501}
There exists a compact set $K$ such that $g(T)$ has an AE coordinate system on $K^c=M-K$.
\end{lem}
\emph{Proof of the lemma}: From \cite[Theorem $1.1$]{CTY11}, there exist a compact set $K$ and $S>0$ such that $|\text{Rm}(x,t)|\le S$ on $K^c\times [0,T)$.
Enlarge $K$ if necessary, we can assume $g_{ij}(0)$ is an AE coordinate system on $K^c$ and $\partial K$ is smooth. Then we can use the same argument in Theorem \ref{R01} on the parabolic cylinder $K^c \times [0,T)$ to conclude that  $g(T)$ has an AE coordinate system on $K^c$.

Let $\{x_n\}_{n\in \mathbb N}$ be a sequence in $\Omega$ such that $0< c \le R(x_n,T)\le C $ for some constants $0<c<C$. Since by the Lemma \ref{L501}, $g(T)$ has curvature bounded by $Cr^{-2-\sigma}$, all $x_n$ are contained in a compact set of $M$. Then we assume, by taking a subsequence if necessary, $x_n$ converges to a point $x_{\infty}$ in $M$. If $x_{\infty}$ is not in $\Omega$, by Lemma \ref{L601} in the next section, we have $R(x_n,T)$ goes to infinity which is a contradiction.

Thus, the proof of Theorem \ref{T501} is complete.
\end{proof}

\begin{rem}
We call $K^c$ in Lemma \ref{L501} the AE end of $\Omega$.
\end{rem}

We fix $0<\rho<r$ where $r$ is the constant from Theorem \ref{T501}$(3)$ and define $\Omega_{\rho} \subset \Omega$ be the closed subset of all $x\in \Omega$ for which $R(x,T)\le \rho^{-2}$. For a component $\Omega_1$ of $\Omega$ which contains no point of $\Omega_{\rho}$, by the canonical neighborhood theorem, one of the following holds, see \cite[Lemma $11.28$]{MT07}:
\begin{enumerate}

\item $\Omega_1$ is a strong double $\epsilon$-horn and is dffeomorphic to $S^2 \times \mathbb R$.

\item $\Omega_1$ is a $C$-capped $\epsilon$-horn and is diffeomorphic to $\mathbb R^3$ or a punctured $\mathbb RP^3$.

\item $\Omega_1$ is a compact component and is diffeomorphic to $S^3/\Gamma$, $S^1 \times S^2$ or $\mathbb RP^3\# \mathbb RP^3$.
\end{enumerate}
Those are all possibilities if $M$ is orientable.

Let $\Omega^0(\rho)$ be the union of all components of $\Omega$ containing points of $\Omega_{\rho}$, then $\Omega^0(\rho)$ has finitely many components and is a union of the AE end and finitely many strong $\epsilon$-horns each of which is disjoint from $\Omega_{\rho}$. The finiteness of horns can be derived from the properness of $R(T) \to (0,\infty)$ and the rest arguments can be found in \cite[Lemma $11.30$]{MT07}.

Next, we have the following lemma which asserts the existence of a strong $\delta$-necks on which we will do surgeries.

\begin{lem}\cite[Theorem $5.1$]{BBM11} \label{L502}
For any $\delta>0$, there exist $h\in (0,\delta \rho)$ and a constant $D=D(\delta,\rho)$ such that the following holds: Let $x,y,z \in \Omega$ such that $R(x,t)\le \rho^{-2}, R(y,t)=h^{-2}$ and $R(z,t)\ge Dh^{-2}$. Assume that there is a curve $\gamma$ in $\Omega^c_\rho$ connecting $x$ to $z$ via $y$. Then $(y,t)$ is center of a strong $\delta$-neck.
\end{lem}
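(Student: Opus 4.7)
The plan is a contradiction argument following the standard Perelman point-picking/rescaling scheme. Suppose the claim fails for some fixed $\delta_0>0$. Then for each $k$ we can find $h_k\in(0,\delta_0\rho)$ with $h_k\to 0$, $D_k\to\infty$, a time $t_k$, and points $x_k,y_k,z_k\in\Omega$ with $R(x_k,t_k)\le\rho^{-2}$, $R(y_k,t_k)=h_k^{-2}$, $R(z_k,t_k)\ge D_k h_k^{-2}$, and a curve $\gamma_k\subset\Omega_\rho^c$ from $x_k$ to $z_k$ passing through $y_k$, yet $(y_k,t_k)$ is \emph{not} the center of any strong $\delta_0$-neck.

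Set $Q_k:=h_k^{-2}=R(y_k,t_k)$ and rescale the flow by $\tilde g_k(s):=Q_k\,g(t_k+Q_k^{-1}s)$ based at $y_k$. Since $h_k<\rho<r$, every point on $\gamma_k$ has $R\ge\rho^{-2}\ge r^{-2}$ and hence admits a $(C,\epsilon)$-canonical neighborhood by Theorem \ref{T501}(3); after rescaling these neighborhoods sit at scale comparable to $1$. The bounded-curvature-at-bounded-distance property packaged inside canonical neighborhoods, together with Shi-type derivative estimates and the $\kappa$-noncollapsing supplied by Theorem \ref{T205} applied at scale $h_k$, give uniform curvature control on any rescaled metric ball around $y_k$ of fixed radius and on a backward parabolic neighborhood. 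Hamilton's compactness theorem then produces a smooth pointed limit flow $(M_\infty,g_\infty(s),y_\infty)$ on some time interval $s\in(-a,0]$ with $R_{g_\infty}(y_\infty,0)=1$.

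The next step is to identify the limit as an evolving round cylinder. Because $R(x_k,t_k)/Q_k\le h_k^2/\rho^2\to 0$ and $R(z_k,t_k)/Q_k\ge D_k\to\infty$, the rescaled curve $\gamma_k$ contains at time $s=0$ points of arbitrarily small and arbitrarily large scalar curvature. At points of $\gamma_k$ where the rescaled curvature is comparable to $1$, the canonical neighborhood is forced in the limit to be $\epsilon$-close to a round neck: a cap region or a closed $\epsilon$-round component would trap one end of the curve and prevent the curvature along $\gamma_k$ from dropping below $\rho^{-2}Q_k^{-1}$ on one side and exceeding $D_k$ on the other. Chaining these overlapping limit necks along $\gamma_k$ on both sides of $y_\infty$ yields a two-ended round cylinder of infinite extent through $y_\infty$; the shrinking-cylinder ODE then determines $g_\infty(s)$ on a definite backward interval. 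Hence for $k$ large $(y_k,t_k)$ must lie at the center of a strong $\delta_0$-neck, contradicting the choice of $y_k$.

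The main obstacle is this third step: making the chaining rigorous so that the canonical-neighborhood information along $\gamma_k$ really assembles into a round cylinder in the limit, rather than being derailed by a nearby cap, a closed component, or a singularity forming off the curve. The crucial structural input is that $\gamma_k$ lies entirely in $\Omega_\rho^c$, so the scalar curvature stays above the canonical-neighborhood threshold everywhere along it, while the prescribed curvature ratios force the rescaled curvature to vary from arbitrarily small to arbitrarily large, thereby compelling $\gamma_k$ to traverse an arbitrarily long tube of necks in the rescaled picture.
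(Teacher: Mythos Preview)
The paper does not give its own proof of this lemma; it is quoted from \cite[Theorem~5.1]{BBM11}, and the only remark the paper makes about the argument is that ``the proof of lemma~\ref{L502} argues by contradiction by choosing two independent sequences $h_i\to 0$ and $D_i\to+\infty$''---exactly the scheme you outline. Your sketch is the standard Perelman rescaling/compactness argument and matches this; the one citation slip is that Theorem~\ref{T205} is stated for a long-time solution $t\in[0,\infty)$, whereas here you are on the finite interval $[0,T)$ before the first singular time, so the $\kappa$-noncollapsing you need should instead be taken from the usual $\mu$-monotonicity on bounded time intervals.
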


Now for the surgery parameters $r,\delta<1$ we set $\rho=r\delta$, then the scale $h=h(\rho,r)=h(\delta,r)$ and $D=D(\rho,r)=D(\delta,r)$ are determined. Moreover,  we require that
\begin{align}\label{EK500}
\lim_{\delta \to 0}\frac{D(\delta, r)h(\delta,r)}{\rho^3}=0
\end{align}
since the proof of lemma \ref{L502} argues by contradiction by choosing two independent sequences $h_i \to 0$ and $D_i \to +\infty$.

We say $(M_+,g_+)$ is obtained from $(\Omega, g(T))$ by $(r,\delta)$-surgery at time $T$ if 
\begin{enumerate}

\item $M_+$ is obtained from $\Omega$ by removing components disjoint from $\Omega_{\rho}$ and cutting along a locally finite collection of disjoint $2$-spheres, capping off $3$-balls.

\item All $x \in M_+\backslash M(T)$ are contained in a surgery cap and the cutting and capping are done on a strong $\delta$-neck centered at a point $y$ with $R(t,T)=h^{-2}$.

\item $(M_+,g_+)$ is pinched toward positive curvature.
\end{enumerate}

Now we show $(r,\delta)$-surgery must exist, see \cite[Lemma $7.6$]{BBM11}. 

By Zorn's lemma, on $\Omega$ there exists a maximal collection $\{N_i\}$ of pairwise disjoint $\delta$-necks centered at $y_i$ with $R(y_i,T)=h^{-2}$. Then from lemma \eqref{L502}, every components of $\Omega \backslash \cup_i N_i$ has the scalar curvature either less than $Dh^{-2}$ or greater than $\rho^{-2}$. Then we remove all the components of the second kind and do surgeries on those $\delta$-necks $N_i$.

Now we let $M_+$ be the resulting manifold and $R(g_+)\in (0,Dh^{-2}]$. From the construction we know that each component of $M_+$ contains at least one point $p$ at which $R(p,T)\le \rho^{-2}$, hence there are at most finitely many components by the properness of $R$. Moreover one of the component $M_+^0$ containing the AE end of $M_+$ is an AE manifold with the same order $\sigma$ as $M$. In addition, the mass of $(M_+^0, g(T))$ is well defined and is equal to that of $M$, by the same argument in \cite{DM07}.

In general, we can construct three weakly decreasing parameter functions $r(t),\delta(t), \kappa(t), t\in [0, \infty)$ to regulate the surgery process such that $r(t)$ is a canonical neighnorhood scale function. The following existence theorem is proved in \cite[Theorem $15.9$]{MT07}, see also \cite[Theorem $1.2$]{BBM11}.

\begin{thm} \label{T502}
There exists a Ricci flow with surgery $(\mathcal M,g_{\mathcal M})$ on $[0, \infty)$ with the initial condition $(M,g)$ and decreasing functions $\delta(t),r(t),\kappa(t):[0,\infty) \to \mathbb R^+$ such that the following holds,
\begin{enumerate}
\item $(\mathcal M,g_{\mathcal M})$ has curvature pinched toward positive;
\item the flow satisfies the strong $(C,\epsilon)$-canonical neighborhood theorem with parameter $r(t)$ on $[0,\infty)$;
\item the flow is $k(t)$-noncollapsed on $[0,\infty)$ on scales $\le \epsilon$ and
\item for any singular time $t$ the surgery is performed with control $\delta(t)$ at scale $h(t)=h(\rho(t),\delta(t))=h(r(t)\delta(t),\delta(t))$.
\end{enumerate}
\end{thm}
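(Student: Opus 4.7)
The plan is to adapt the Perelman construction of Ricci flow with surgery, as worked out in detail by Morgan--Tian \cite{MT07} and reformulated by Bessi\`eres--Besson--Maillot \cite{BBM11}, to the AE setting. The construction is inductive. On the first smooth interval $[0,T_1)$, where $T_1$ is either the first singular time or $+\infty$, Theorem \ref{R01} preserves the AE coordinate structure, the maximum principle preserves $R>0$, and the $3$-dimensional Hamilton--Ivey pinching toward positive curvature is automatic since $R(g(0))>0$. At $T_1$, Theorem \ref{T501} identifies $\Omega$, Lemma \ref{L501} locates its AE end, and an $(r(T_1),\delta(T_1))$-surgery on strong $\delta$-necks in the $\epsilon$-horns (Lemma \ref{L502}) produces a new disjoint union; discarding components not meeting $\Omega_\rho$ and retaining only the distinguished component $M_+^0$ containing the AE end gives an AE manifold with $R>0$ and preserved pinching (the surgery caps carry nonnegative curvature). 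Restarting Ricci flow from $(M_+^0,g_+)$ and iterating produces $\mathcal M$.

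The construction of the functions $r(t),\delta(t),\kappa(t)$ proceeds on a preassigned partition $0=T_0<T_1<T_2<\cdots$ on each of whose pieces the functions are taken constant. Following Perelman's interleaved scheme: first fix a tentative $\kappa_j$, then choose $r_j$ small enough that the strong $(C,\epsilon)$-canonical neighborhood property holds on $[T_{j-1},T_j]$ for points of curvature $\ge r_j^{-2}$ (this is a blow-up/contradiction argument that uses $\kappa$-noncollapsing together with the canonical neighborhood structure inherited from earlier intervals), and then choose $\delta_j$ so small that \eqref{EK500} and closeness to Perelman's standard solution guarantee canonical neighborhoods are preserved in a spacetime neighborhood of each surgery cap. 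The pinching bound is preserved across surgery by the nonnegative curvature of the cap and the standard Hamilton--Ivey argument.

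Noncollapsing in the AE setting is handled via the $\mu$-functional rather than $\mathcal L$-geometry. Between surgery times, $\mu(g(t),\tau)$ is monotone as in Section 3. At each surgery one shows, by the cutoff argument of Perelman \cite{P031}, that the drop in $\mu$ is controlled by $\delta(t)$ because the region altered by surgery has volume at most $Ch(t)^n=C(\delta(t)r(t))^n$ while the surgery cap is nearly isometric to a piece of the standard solution whose $\mu$-contribution is uniformly bounded. Summing over all surgeries in $[0,T_j]$ and combining with Theorem \ref{T202}, which gives $\lim_{\tau\to\infty}\mu(g(0),\tau)=0$ and hence a uniform lower bound for $\mu(g(t),\tau)$ via Lemma \ref{L4001} and the monotonicity across surgery, yields a uniform $\kappa_j>0$ at each scale $\le\epsilon$ by the standard comparison \eqref{E243}.

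The main obstacle is the familiar circular dependence among $r(t),\delta(t),\kappa(t)$: each is chosen using bounds that in turn depend on properties one is trying to propagate. This is resolved by Perelman's bootstrap as in the compact case. The AE-specific point to be careful about is the behavior at infinity: one uses pseudolocality (as in Theorem \ref{T501}(4)) to show that the AE end never develops finite-time singularities, so surgeries are always confined to a compact region, and Theorem \ref{R01} applies on every smooth interval to propagate the AE coordinates through the whole flow. With these adaptations in place, the remainder of the argument is parallel to \cite[Theorem 15.9]{MT07} and \cite[Theorem 1.2]{BBM11}.
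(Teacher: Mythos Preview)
The paper does not actually prove this theorem; it simply cites \cite[Theorem 15.9]{MT07} and \cite[Theorem 1.2]{BBM11} for the existence result, after having set up the surgery framework (Theorem \ref{T501}, Lemma \ref{L501}, Lemma \ref{L502}) in the AE context. Your sketch is therefore considerably more detailed than what the paper offers, and it is a fair outline of what those references do, with the AE-specific adaptations (pseudolocality to confine singularities to a compact set, Theorem \ref{R01} to propagate the AE coordinates) correctly identified.

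Two deviations are worth flagging. First, for item (3) (noncollapsing through surgeries) you propose to use the $\mu$-functional and Zhang-type estimates across surgery times. The references the paper cites instead use Perelman's $\mathcal L$-geometry and reduced volume; the control of $\mu$ across surgeries via \cite{Z07} is precisely what the paper invokes later for Theorem \ref{T701} (finiteness of surgeries), not for the present existence statement. Your route is plausible but would require care: the estimate \eqref{S105} is only available once one already knows surgeries happen at controlled scales, so there is a genuine bootstrap issue in using it to \emph{establish} the $\kappa(t)$ in Theorem \ref{T502} rather than to refine it afterward. Second, you propose to retain only the AE component $M_+^0$ after each surgery, whereas the paper's $(r,\delta)$-surgery keeps all components of $\Omega$ meeting $\Omega_\rho$; this matters for recovering the topology of $M$ (Corollary \ref{C701}) but is harmless for the mass argument, which tracks only $\mathcal M_0$.
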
 

Next we show that surgery times do not accumulate.
\begin{thm} \label{T503}
Let $(\mathcal M,G)$ be a Ricci flow with surgery on $[0, \infty)$ with the initial condition $(M,g)$ with parameter functions  $\delta(t),r(t),\kappa(t)$, we show that on each compact interval $I$ of $[0,\infty)$, we have at most finitely many surgeries.
\end{thm}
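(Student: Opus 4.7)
The plan is to monitor Perelman's $\mu$-functional across the surgery times in $[0,T]$, as hinted in the introduction. The idea is that each surgery drops $\mu$ by a uniform amount while smooth monotonicity and uniform AE control keep $\mu$ bounded above and below, so infinitely many surgeries on $[0,T]$ would force $\mu$ below its lower bound. First I would localize: combining pseudolocality on the AE end (as in Theorem \ref{T501}(4) and \cite{CTY11}) with the fact from Section 2 that each post-surgery AE component retains the same order $\sigma$ and the same AE coordinates at infinity, one obtains a single compact $K_T \subset M$ containing every surgery region for $t \in [0,T]$, together with a uniform curvature bound on its complement. Since $r(t),\delta(t),\kappa(t)$ are weakly decreasing, the surgery scale $h(t)=h(r(t)\delta(t),\delta(t))$ is bounded below by some $h_0>0$ on $[0,T]$, so every surgery in $I$ excises a strong $\delta(T)$-neck of scale at least $h_0$ sitting inside $K_T$.

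Second, I would set up the $\mu$-bounds. Fix $\bar\tau>T$ and set $\tau(t)=\bar\tau-t$. On every smooth segment of the flow the standard entropy monotonicity gives $\mu(g(t_2),\tau(t_2))\ge \mu(g(t_1),\tau(t_1))$ for $t_1<t_2$ in the same segment. Because the post-surgery metrics form a uniform family of AE manifolds (preserved order, preserved mass, uniform bounded curvature on $K_T^c$, and uniform control inside $K_T$), Theorem \ref{TC202} yields a uniform lower bound $\mu(g(t_k^+),\tau(t_k))\ge -\Lambda$ independent of $k$. The crux of the proof is the surgery drop estimate
\[
\mu(g(t_k^+),\tau(t_k))\le \mu(g(t_k^-),\tau(t_k))-\eta_0,\qquad \eta_0=\eta_0(h_0,\bar\tau)>0,
\]
which I would establish by taking an approximate minimizer $u_-$ of $\mu(g(t_k^-),\tau(t_k))$, modifying it to a test function $u_+$ on the post-surgery manifold that interpolates smoothly through a controlled neighborhood of each surgery cap using the explicit standard model (Definition \ref{E501}) and the rigidity of a strong $\delta$-neck at scale $\ge h_0$ (Lemma \ref{L502}), and quantitatively comparing the two entropies.

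Iterating the surgery drop across $N$ surgery times together with the smooth monotonicity on the segments in between yields
\[
\mu(g(t_N^+),\tau(t_N))\le \mu(g(0),\bar\tau)-N\eta_0,
\]
so that $N\eta_0\le \mu(g(0),\bar\tau)+\Lambda$ and hence $N$ is bounded in terms of the initial data, finishing the proof. The main obstacle is the quantitative drop estimate of Step 2: \emph{a priori} $\mu$ could move in either direction across a surgery, since excising the high-scalar-curvature horn and gluing in a standard cap have competing effects on the infimum defining $\mu$. Producing a uniform $\eta_0>0$ requires a careful test-function comparison exploiting both the fixed model surgery and the minimum neck scale $h_0$; the delicate part will be adapting Perelman's closed-manifold computation to the AE setting, where approximate minimizers could concentrate near the end and where one must appeal to the uniform control supplied by Theorem \ref{TC202} to rule out such behavior.
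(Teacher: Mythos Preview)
Your approach has a genuine gap, and in fact the paper's proof is far more elementary than what you propose. The paper simply observes that on a compact interval $I$ the decreasing parameter functions give uniform constants $r,\delta$ and hence uniform $h,D$; by construction the post-surgery scalar curvature satisfies $R\le Dh^{-2}$, and pinching then gives $|\mathrm{Rm}|\le CDh^{-2}$. The doubling-time estimate for $|\mathrm{Rm}|^2$ under Ricci flow therefore guarantees smooth existence for at least $\frac{h^2}{16CD}$ after every surgery, so consecutive surgery times are separated by a fixed positive amount and only finitely many can occur in $I$. No entropy argument is needed.

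Your $\mu$-functional strategy fails at two points. First, the claimed surgery drop $\mu(g(t_k^+),\tau)\le \mu(g(t_k^-),\tau)-\eta_0$ goes in the wrong direction: surgery removes high-curvature horns and caps off with standard pieces, which tends to \emph{raise} $\mu$ toward zero, not lower it. Indeed the estimate actually available (see \eqref{S105}, following \cite{Z07}) is $\mu(g(T^-),\sigma^2)\le \mu(g(T),\sigma^2)+ck(\sigma+1)^3h^3$, i.e.\ the pre-surgery value is bounded above by the post-surgery value plus a small error---there is no uniform downward jump. Second, even granting your inequality, the iteration does not chain: smooth monotonicity gives $\mu(g(t_k^-),\tau(t_k))\ge \mu(g(t_{k-1}^+),\tau(t_{k-1}))$, which is a \emph{lower} bound on $\mu(t_k^-)$, whereas your telescoping to $\mu(g(t_N^+),\tau(t_N))\le \mu(g(0),\bar\tau)-N\eta_0$ would require an \emph{upper} bound. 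The two inequalities point in incompatible directions and cannot be combined as you describe. You may be conflating this statement with the much harder Theorem~\ref{T701} (global finiteness of surgeries), whose proof does use $\mu$, but via a different mechanism: one exploits that near a singular time $\mu$ is forced to be quite negative at the canonical-neighborhood scale, and then shows the \emph{small} change of $\mu$ across surgeries cannot undo this negativity fast enough.
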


\begin{proof}
Since all the parameter functions are decreasing, we can choose uniform parameters $\delta, r$ and $\kappa$ on $I$. Therefore functions $h$ and $D$ are uniformly determined as well. At each singular time $t$, by our construction $R(x,t) \le Dh^{-2}$. Since curvature is pinched toward positive curvature, we can assume $|\text{Rm}|\le CDh^{-2}$. Now from the evolution equation of $|\text{Rm}|^2$ 
$$
\partial_t|\text{Rm}|^2 \le \Delta |\text{Rm}|^2+16|\text{Rm}|^3
$$
the regular Ricci flow exists at least for time $\frac{h^2}{16CD}$ from $t$. Since all constants are uniformly chosen, there are at most finitely many surgeries performed on $I$.
\end{proof}

\begin{rem}
Theorem \ref{T503} holds for all Ricci flows with surgery with normalized initial condition, which is satisfied after a scaling, if necessary, for our original manifold $M$.
\end{rem}

From the construction of Ricci flow with surgery, each time slice $(\mathcal M(t),g(t))$ consists of an AE manifold and a finite number of compact components. Moreover, we can recover the topology of $\mathcal M(0)=M$ by performing connected sum operations among $\mathcal M(t)$ and finitely many $S^3/\Gamma$ and $S^1 \times S^2$ for any $t>0$.

\section{Proof of Theorem \ref{T102}}

We first introduce the following definition.
\begin{defn}
For a Ricci flow with surgery $\mathcal M$, a connected open subset $\mathcal X \subset \mathcal M$ is called a path of components if for every time $t$, the intersection $\mathcal X(t)$ of $\mathcal X$ with each time-slice $\mathcal M(t)$ is a connected component of $\mathcal M(t)$.
\end{defn}
We set $\mathcal M_0$ to be the path of components of $\mathcal M$ such that $\mathcal M_0(t)$ is an AE manifold for any $t \ge 0$. 

Next we quote a local regularity lemma.

\begin{lem} \cite[Lemma $3.1$]{KL14} \label{L601}
Let $\mathcal M$ be a Ricci flow with surgery, with normalized intial condition. Given $T >\frac{1}{100}$, there are numbers $\mu=\mu(T) \in (0,1), \sigma=\sigma(T) \in (0,1), i_0=i_0(T)>0$ and $A_k=A_k(T)<\infty,k \ge 0$, with the following property. If $t \in (\frac{1}{100},T]$ and $|R(x,t)| < \mu\rho(0)^{-2}-r(T)^{-2}$, put $Q=|R(x,t)|+r(t)^{-2}$. Then
\begin{enumerate}
\item The forward/backward parabolic ball $P_{\pm}(x,t,\sigma Q^{-\frac{1}{2}})$ is unscathed, that is, with no intersection with the surgery cap.
\item $|\text{Rm}| \le A_0 Q, \text{inj} \ge i_0Q^{-\frac{1}{2}}$ and $|\nabla^k \text{Rm}|\le A_k Q^{1+\frac{k}{2}}$ on the union  $P_+(x,t,\sigma Q^{-\frac{1}{2}}) \cup P_-(x,t,\sigma Q^{-\frac{1}{2}})$ of the forward and backward parabolic balls.
\end{enumerate}
\end{lem}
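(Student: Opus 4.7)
The plan is to follow the local regularity strategy for Ricci flow with surgery developed by Perelman and refined in \cite{KL14}, in four coupled steps. Step one: the hypothesis forces $(x,t)$ to be outside the canonical-neighborhood regime. Monotonicity of the surgery parameter functions and the choice of $\mu$ small enough in terms of $T$ (and the fixed numbers $\rho(0)$, $r(T)$) turn the bound $|R(x,t)| < \mu\rho(0)^{-2} - r(T)^{-2}$ into $|R(x,t)| < r(t)^{-2}$, so the scalar-curvature threshold for the strong $(C,\epsilon)$-canonical neighborhood property in Theorem~\ref{T502}(2) is not met at $(x,t)$. Step two: the parabolic balls $P_\pm(x,t,\sigma Q^{-1/2})$ are unscathed. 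Every surgery is performed at scale $h(s) \le h(\rho(0),\delta(0))$ at which $R \sim h(s)^{-2} \gg \rho(0)^{-2} \gg Q$, and the canonical-neighborhood gradient estimates $|\nabla R| \le \eta R^{3/2}$ and $|\partial_t R| \le \eta R^{2}$ (rewritten as $|\nabla R^{-1/2}| \le \eta/2$ and $|\partial_t R^{-1}| \le \eta$) imply that $R$ cannot fall from $h(s)^{-2}$ down to $O(Q)$ over a parabolic distance smaller than a universal multiple of $Q^{-1/2}$. Choosing $\sigma$ below that multiple excludes every surgery cap from $P_\pm(x,t,\sigma Q^{-1/2})$.

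Step three is the main curvature bound $|\text{Rm}| \le A_0 Q$, argued by contradiction and point-picking. If no finite $A_0$ works for the given $T$, one extracts a sequence $(x_i,t_i)$ with $Q_i := |R(x_i,t_i)| + r(t_i)^{-2}$ along which $|\text{Rm}|/Q_i$ blows up on $P_\pm(x_i,t_i,\sigma Q_i^{-1/2})$, and rescales the flows by $Q_i$. By steps one and two, extended uniformly along the sequence after a standard point-picking, the rescaled flows are unscathed on parabolic balls of radius tending to infinity, and by Theorem~\ref{T502}(3) the basepoints are $\kappa$-noncollapsed. Hamilton's compactness extracts a complete ancient $\kappa$-solution of positive scalar curvature as limit; every such limit admits a canonical neighborhood at its basepoint, contradicting the fact that the limit basepoint inherits the subthreshold condition from step one.

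Step four combines the bound of step three with Shi's derivative estimates (applicable because the parabolic neighborhood is unscathed) to give $|\nabla^k\text{Rm}| \le A_k Q^{1+k/2}$ after a harmless shrinking of $\sigma$, and the $\kappa$-noncollapsing of Theorem~\ref{T502}(3) together with the $C^0$ curvature bound delivers $\text{inj} \ge i_0 Q^{-1/2}$ by the standard curvature-plus-volume-lower-bound implies injectivity radius lower bound argument. The principal obstacle is step three: the point-picking must be arranged so that the blow-up flows are simultaneously unscathed, below the canonical-neighborhood threshold, and $\kappa$-noncollapsed on parabolic balls of increasing radius, and so that the smooth limit satisfies the hypotheses of Perelman's $\kappa$-solution classification. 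This is precisely the point at which $\mu$ and $\sigma$ must be chosen simultaneously in $T$, and where both the scaling relation \eqref{EK500} between $h$ and $\rho$ and the detailed structure of ancient $\kappa$-solutions become essential.
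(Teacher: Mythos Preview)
The paper does not prove this lemma; it is quoted verbatim from \cite[Lemma~3.1]{KL14} and used as a black box in the proofs of Theorems~\ref{T501} and~\ref{T601}. So there is no proof in the paper to compare against, and I can only comment on your outline as a stand-alone argument.

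Steps~1, 2, and~4 are along the right lines. Step~3, however, is both overcomplicated and contains a genuine gap. You propose to obtain $|\text{Rm}|\le A_0Q$ by contradiction, point-picking, and extracting an ancient $\kappa$-solution as a blow-up limit. But the quantity $Q_i=|R(x_i,t_i)|+r(t_i)^{-2}$ is \emph{uniformly bounded} on $[0,T]$ (by roughly $2r(T)^{-2}$, from the hypothesis), so rescaling by $Q_i$ does not send parabolic radii to infinity, and no nontrivial limit is being extracted at the basepoints. If instead you mean to rescale at the high-curvature points $(y_i,s_i)$ where $|\text{Rm}|/Q_i\to\infty$, then the original basepoints $(x_i,t_i)$ recede to infinity in the limit, and your stated contradiction---that ``the limit basepoint inherits the subthreshold condition from step one'' yet sits in a canonical neighborhood---evaporates: the limit basepoint is the high-curvature point, which was \emph{never} assumed to be subthreshold.

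The argument you are missing is much simpler and needs no compactness at all. Once Step~2 gives you a bound on $R$ throughout $P_\pm(x,t,\sigma Q^{-1/2})$ (and this is really what the canonical-neighborhood gradient estimates $|\nabla R^{-1/2}|\le\eta/2$, $|\partial_t R^{-1}|\le\eta$ deliver: they propagate the scalar-curvature bound from $(x,t)$ to the whole parabolic ball), the Hamilton--Ivey pinching in Theorem~\ref{T502}(1) converts the bound on $R$ directly into a bound on $|\text{Rm}|$. That is the content of Step~3; no blow-up is needed. Your reference to \eqref{EK500} is also misplaced: that relation governs the finiteness-of-surgeries argument in Section~7 and plays no role here.
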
 

Now we consider a sequence of $\{\mathcal M^i\}$ of Ricci flows with surgery, where we let $\delta_i(0) \to 0$, hence $\rho_i$ and $h_i$ also go to $0$. We first prove a stability result, which shows that on the finite time interval, all surgeries are done in a compact set.
\begin{thm} \label{T601}
Let $\{\mathcal M^i\}$ be a sequence of Ricci flows with surgery with $\mathcal M^i(0)=M$ and $\lim_{i \to \infty} \delta_i(0)=0$. For any $S>0, T>0$, there exists a compact set $K \subset M$ such that for sufficiently large $i$, the cylinder $K^c \times [0,T]$ exists in $\mathcal M^i$ and $|\text{Rm}_i| \le S$.
\end{thm}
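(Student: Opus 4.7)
The plan is to combine Perelman's pseudolocality theorem for Ricci flow with surgery (as used in the proof of Theorem \ref{T501}, i.e.\ \cite[Theorem $1.1$]{CTY11}) with the fact that the surgery scale $h_i$ tends to zero by \eqref{EK500} as $\delta_i(0)\to 0$. The AE condition produces abundant near-Euclidean regions near infinity where pseudolocality applies, while the shrinking surgery scale prevents any surgery from being performed in those regions for $i$ large.

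First, since $(M,g)$ is AE of order $\sigma>0$, we have $|\text{Rm}(g(0))|=O(r^{-\sigma-2})$ and the balls $B_{g(0)}(x,r_0)$ with $r(x)$ large are arbitrarily close to Euclidean in volume. Hence for any prescribed $\epsilon_0\in(0,1)$ one finds $r_0=r_0(\epsilon_0,T,S)$ such that for every $x$ with $r(x)\ge 2r_0$, the ball $B_{g(0)}(x,r_0)$ satisfies $|\text{Rm}(g(0))|\le \epsilon_0 r_0^{-2}$ and $\text{Vol}_{g(0)}(B_{g(0)}(x,r_0))\ge (1-\epsilon_0)\omega_n r_0^n$. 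Enlarging $r_0$ so that $\epsilon_0^2 r_0^2 \ge T$, I would then invoke the Chen--Tang--Yau form of pseudolocality to conclude that the forward parabolic cylinder $B_{g(0)}(x,\epsilon_0 r_0)\times [0,\epsilon_0^2 r_0^2]$ is unscathed in every $\mathcal{M}^i$ for $i$ with $h_i$ sufficiently small compared to $r_0$, with interior curvature bound $|\text{Rm}_i|\le (\epsilon_0 t)^{-1}$. Combining this with the short-time bound coming from the smallness of the initial curvature (analogous to the maximum-principle argument in Theorem \ref{R01}) gives $|\text{Rm}_i|\le S$ on $\{r\ge 2r_0\}\times [0,T]$.

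Second, take $K:=\{r\le 2r_0\}$, which is compact since $M$ has one end on which $r\to\infty$. To see that no surgery in $\mathcal{M}^i$ occurs in $K^c\times [0,T]$ for $i$ large, recall that every surgery point has scalar curvature of order $h_i^{-2}$, while $h_i\to 0$ as $i\to\infty$ by \eqref{EK500}. The curvature bound above gives $R\le 3S$ on $K^c\times [0,T]$, so once $h_i^{-2}>3S$ no surgery can happen there. Thus $K^c\times [0,T]$ is unscathed in $\mathcal{M}^i$ and $|\text{Rm}_i|\le S$ holds on it, as required.

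The main obstacle is the rigorous applicability of pseudolocality in the Ricci flow with surgery setting, with the conclusion that the parabolic cylinder is unscathed on all of $[0,T]$. This in turn relies on the smallness of $h_i$ relative to the spatial scale $r_0$: any incoming surgery cap would carry curvature of order $h_i^{-2}$, which would violate the pseudolocality estimate once $i$ is large. Conceptually the argument reduces to two points: (i) pseudolocality transports the AE smallness of the initial curvature to a uniform bound on a fixed neighborhood of infinity over the entire time interval $[0,T]$, and (ii) the vanishing of $h_i$ prevents any surgery from taking place where the curvature is bounded.
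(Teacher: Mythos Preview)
Your overall intuition is correct --- near infinity the initial data is close to Euclidean, so curvature should stay small and the surgery scale $h_i\to 0$ should then preclude any surgery there. However, the execution has a genuine gap, and the paper takes a different route.

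The gap is exactly the circularity you flag at the end but do not resolve. The Chau--Tam--Yu pseudolocality theorem \cite[Theorem 1.1]{CTY11} is stated for \emph{complete smooth} Ricci flows. You want to apply it on $K^c\times[0,T]$ inside a Ricci flow with surgery, but this requires knowing in advance that the flow restricted to $K^c$ is smooth, i.e.\ unscathed. Your proposed justification --- that a surgery cap in $K^c$ would carry curvature $\sim h_i^{-2}$ and hence violate the pseudolocality bound --- presupposes that the pseudolocality bound already holds up to the surgery time, which is what you are trying to prove. A rigorous open--closed argument is needed, and making the ``open'' step work (extending the curvature bound forward in time across potential surgery times in $K$, on a manifold that is not complete after you restrict to $K^c$) is nontrivial; it essentially requires a local regularity statement \emph{for Ricci flow with surgery}, not for smooth flow.

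The paper does not attempt a single-shot pseudolocality. Instead it argues by contradiction and uses Lemma~\ref{L601} (Kleiner--Lott's local regularity lemma for Ricci flow with surgery), which already contains the ``unscathed'' conclusion in its statement: given a scalar curvature bound at a point, both the forward and backward parabolic balls of a definite size $\theta'=\sigma Q^{-1/2}$ are unscathed with full curvature control. The proof takes points $x_j\to\infty$ with $|\mathrm{Rm}_j|(x_j,t_j)>S$, uses the AE condition to get convergence of $B_g(x_j,r_j)$ to Euclidean balls, obtains a small initial time interval $[0,\theta]$ on which the flow is close to Euclidean, and then \emph{iterates} Lemma~\ref{L601} in steps of size $\theta'$ up to time $T$. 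The crucial point is that $\theta'$ depends on $S,T,r,\kappa,(M,g)$ but \emph{not} on $\delta_j$, so the number of iterations is uniform in $j$, while the hypothesis $|R|<\mu\rho_j(0)^{-2}-r(T)^{-2}$ of Lemma~\ref{L601} becomes easier to satisfy as $\delta_j(0)\to 0$.

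In short: replace the appeal to smooth-flow pseudolocality by Lemma~\ref{L601}, and replace the single-shot application on $[0,T]$ by an iteration in uniform time steps. That is what converts your heuristic into a proof.
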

\begin{proof}
We prove it by contradiction.

Assume there is a sequence $\{x_j\}_{j\in \mathbb N}$ on $M$ with $d_g(x_j,\star)=2r_i$ where $\star$ is a fixed point on $M$ and $r_i \to \infty$ such that $|\text{Rm}_j|(x_j,t_j)>S$ for some $t_j \in [0,T]$.

By the AE condition, balls $(B_g(x_j,r_j),g,x_j)$ converges smoothly to $(\mathbb R^n, g_E, 0)$. Then there exists a $\theta>0$ sufficiently small such that  $B_g(x_j,r_j) \times [0,\theta]$ exists in $\mathcal M^j$ and for any $A>0$, restriction of $g_j$ on $B_g(x_j,A) \times [0,\theta]$ converges smoothly to the Euclidean metric on $B_{g_E}(0,A)\times [0,\theta]$.

Therefore for any $A>0$, we assume $|\text{Rm}| \le S/2$ on  $B_g(x_j,A)\times [0,\theta]$ for $j$ sufficiently large. From Lemma \ref{L601}, there exists $Q, \sigma, A_k,\theta'=\sigma Q^{-\frac{1}{2}}$, all of which depend on $S,T,r,\kappa,(M,g)$, such that the forward parabolic ball $P_+(x_j,\theta,\theta')$ and the backward parabolic ball $P_-(x_j,\theta,\theta')$ are unscathed and $|\nabla^k \text{Rm}|\le A_k Q^{1+\frac{k}{2}}$ with $\text{inj} \ge i_0Q^{-\frac{1}{2}}$ on $B_g(x_j,A)\times [\theta-\theta',\theta+\theta']$ for $j$ sufficiently large. By taking a diagonal subsequence, we have $B_g(x_j,r_j) \times [0,\theta+\theta']$ converges smoothly to the Euclidean metric on $\mathbb R^n \times [0,\theta+\theta']$.

Now we can continue this process, since $\theta'$ does not depend on $\delta^j$, to conlude that $B_g(x_j,r_i) \times [0,T]$ converges smoothly to the Euclidean metric on $\mathbb R^n \times [0,T]$ and $|\text{Rm}_j| \le S/2$ on  $B_g(x_j,1)\times [0,T]$. This is a contradiction.
\end{proof}

From Theorem \ref{T201}, we can find a constant $\epsilon_0>0$ such that $\mu_{S^2 \times \mathbb R}(g_c,1)\le -2\epsilon_0$, where $g_c$ is the standard metric on the cylinder with scalar curvature $R=1$. Therefore, we choose the parameter $\epsilon$ for the surgery as follows, for any $\epsilon$-neck with metric $g$ and center $p$, we have $\mu_{S^2 \times (-\epsilon^{-1},\epsilon^{-1})}(R(p)g,1)\le -\epsilon_0$.

Let $\mathcal M$ be a Ricci flow with surgery such that $r,\rho,h$ and $\delta$ are uniform surgery parameters. If $T$ is a surgery time, we consider the change of the $\mu$-functional from $\left(\mathcal M(T),g(T)\right)$ to $\left(\mathcal M(T^-),g(T^-)\right)$. Henceforth, we assume that $\left(\mathcal M(T^-),g(T^-)\right)$ and $\left(\mathcal M(T),g(T)\right)$ are pre-surgery and post-surgery Riemannian manifolds, respectively.

Now for a Riemannian manifold $(M,g)$, we have the following definition,

\begin{defn}\cite[(2-11)]{Z07}
$$
\lambda_{\sigma^2}(g)=\inf \left\{ \int \left(\sigma^2(4|\nabla v|^2+Rv^2)-v^2\log v^2\right)\, dV-n\log \sigma\, | \, v\in C^{\infty}(M), \,\|v\|_2=1         \right\}.
$$
\end{defn}
By our definition of $\overline{\mathcal W}(g,u,\tau)$ in \eqref{E202}, it is straightforward to compute, by setting $u=v(4\pi\sigma^2)^{\frac{n}{4}}$ that
\begin{align}
\mu(g,\sigma^2)=\lambda_{\sigma^2}(g)-n-\frac{n}{2}\log{4\pi}.
\end{align} 
In other words, $\mu(g,\sigma^2)$ and $\lambda_{\sigma^2}(g)$ are different by a constant.

If we set $g_1=\sigma^{-2}g$ and let $u_1$ be a minimizer of $\lambda_1(g_1)$, then we have, see \cite[(2-12)]{Z07}
\begin{align}\label{S100}
4\Delta_1u_1-R_1u_1+2u_1\log u_1+\Lambda u_1=0
\end{align} 
where $\Lambda=\lambda_1(g_1)$.

Now from  \cite[(2-13)]{Z07} we have
\begin{align}\label{S102}
\lambda_{\sigma^2}(g(T^-)) \le \Lambda+ck\left(1+\frac{4c\sigma^2}{h^2}\right)\frac{\int_U u_1^2 \,dV_{g_1}}{1-\int_U u_1^2 \,dV_{g_1}}.
\end{align} 
where $k$ is the number of surgery caps with scale $h$ and $U$ is any surgery cap.

To estimate the term $\int_U u_1^2 \,dV_{g_1}$, we have the following two lemmas, see \cite[Lemma $2.2$, $2.3$]{Z07}.
\begin{lem} \label{SL101}
$$
\sup_{\Omega^c_{\rho}}u_1^2\le c \max\left \{\left(\frac{\rho}{\sigma}\right)^{-3},1\right\}.
$$
\end{lem}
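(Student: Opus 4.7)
The plan is to combine a Moser iteration argument with a maximum-principle estimate applied to the elliptic equation \eqref{S100} satisfied by the minimizer $u_1$. For a point $x \in \Omega^c_\rho$, by definition $R_g(x) > \rho^{-2}$, so by Theorem \ref{T501}(3) $x$ lies in a strong $(C,\epsilon)$-canonical neighborhood of $g$-scale $R_g(x)^{-1/2}$, with bounded geometry and a uniform Euclidean-type Sobolev inequality after rescaling by $R_g(x)$. Concretely, set $\bar g := R_g(x)\, g = R_{g_1}(x)\, g_1$; then equation \eqref{S100} becomes
\[
4\,\Delta_{\bar g} u_1 = R_{\bar g}\, u_1 - \frac{2}{R_{g_1}(x)}\, u_1 \log u_1 - \frac{\Lambda}{R_{g_1}(x)}\, u_1,
\]
where $R_{g_1}(x) = \sigma^2 R_g(x) \ge (\sigma/\rho)^2$ and the lower-order coefficients are controlled by $1/R_{g_1}(x)$.

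The first step is to run a Moser iteration on this equation in $\bar g$, using the same scheme as Claim~1 in the proof of Theorem \ref{T202}, with Lemma \ref{L201} supplying the Sobolev inequality on $B_{\bar g}(x,1)$ and the bound $\log u \le u^{2/n} + c$ handling the log nonlinearity. The iteration yields
\[
u_1^2(x) \le C \int_{B_{\bar g}(x,1)} u_1^2 \, dV_{\bar g} = C\, R_{g_1}(x)^{3/2} \int_{B_{g_1}(x,\, R_{g_1}(x)^{-1/2})} u_1^2 \, dV_{g_1} \le C\, R_{g_1}(x)^{3/2},
\]
using the normalization $\int u_1^2 \, dV_{g_1} = 1$. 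The second step is to upgrade this to a uniform bound. Since $u_1$ decays exponentially at infinity (as in the argument for Theorem \ref{T202}) and the singular locus sits at finite distance, $\overline{\Omega^c_\rho}$ is compact and a maximizer $x^* \in \overline{\Omega^c_\rho}$ of $u_1$ on $\Omega^c_\rho$ exists. If $R_g(x^*) > \rho^{-2}$ strictly, then $x^*$ is an interior local maximum of $u_1$ on the manifold, hence $\Delta_1 u_1(x^*) \le 0$, and \eqref{S100} yields
\[
u_1(x^*)^2 \ge \exp\!\bigl(R_{g_1}(x^*) - \Lambda\bigr).
\]
Combined with the Moser bound $u_1^2(x^*) \le C\, R_{g_1}(x^*)^{3/2}$, this forces $R_{g_1}(x^*) \le C_1(\Lambda)$ and hence $u_1^2(x^*) \le C_2(\Lambda)$. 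If instead $R_g(x^*) = \rho^{-2}$, then $R_{g_1}(x^*) = (\sigma/\rho)^2$ and the Moser bound gives $u_1^2(x^*) \le C(\sigma/\rho)^3$. In either case, $\sup_{\Omega^c_\rho} u_1^2 \le c \max\{(\sigma/\rho)^3, 1\}$, which is the claim.

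The main obstacle is the Moser iteration in the rescaled setting: one must check that the Sobolev constant in $\bar g$ on the canonical neighborhood is uniform in $x \in \Omega^c_\rho$, which follows from the fact that strong $\epsilon$-tubes and $C$-capped $\epsilon$-horns are uniformly close to standard model geometries. Tracking the $u\log u$ term through the iteration while the coefficient $1/R_{g_1}(x)$ varies is a bit delicate, but proceeds exactly as in Claim~1 of Theorem \ref{T202} because $1/R_{g_1}(x) \le (\rho/\sigma)^2$ can be absorbed with the Sobolev term. A subsidiary point is the uniform bound $|\Lambda| = |\lambda_1(g_1)| \le C$, which follows from the comparison of $\lambda_1$ with the log-Sobolev value on $\mathbb{R}^n$ that is used repeatedly in Section 3.
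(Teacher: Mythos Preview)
Your argument is correct. The paper does not supply its own proof here but cites Zhang \cite[Lemma~2.2]{Z07}; that argument is a Moser iteration on canonical neighborhoods, which is essentially your first step and yields the pointwise bound $u_1^2(x)\le C\,R_{g_1}(x)^{3/2}$. That pointwise estimate already suffices for the application in \eqref{S103} (one only needs $\log^+ u_1\ll R_1$ when $R_1$ is large), but it does not by itself give the $\sup$ bound as stated, since $R_{g_1}$ can be much larger than $(\sigma/\rho)^2$ deep inside $\Omega^c_\rho$ after surgery. Your second step---applying the maximum principle at an interior maximizer $x^*$ and playing the Moser upper bound $u_1^2(x^*)\le C R_{g_1}(x^*)^{3/2}$ against the lower bound $u_1^2(x^*)\ge e^{R_{g_1}(x^*)-\Lambda}$ to force $R_{g_1}(x^*)\le C_1$---is a clean way to upgrade to the uniform statement, and is a genuine addition to the bare Moser scheme.

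Two minor corrections. First, only the upper bound $\Lambda\le n+\tfrac{n}{2}\log 4\pi$ (which follows from $\mu\le 0$) is used in your argument; the two-sided bound $|\Lambda|\le C$ you assert is neither needed nor obviously uniform over all post-surgery metrics $g(T)$ and scales $\sigma$. Second, the compactness of $\overline{\Omega^c_\rho}$ follows directly from $R_g\to 0$ on the AE end together with finiteness of the remaining components; the exponential decay of $u_1$ plays no role there (and there is no ``singular locus'' on the post-surgery manifold).
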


\begin{lem} \label{SL102}
Let $u$ be a positive solution to the inequality
$$
4\Delta u-Ru+2u\log u+\Lambda u \ge 0.
$$
Given a nonnegative function $\phi \in C^{\infty}(M)$ with $\phi \le 1$, suppose there is a smooth function $f$ that, when $R \ge 0$ in the support of $\phi$, satisfies
$$
4|\nabla f|^2 \le R-2\log^+u-3|\Lambda|/2 \quad \text{in the support of}\, \phi.
$$
Then
$$
\frac{1}{2}|\Lambda| \|e^f\phi u\|^2_2 \le 8 \sup_{x \in \text{supp}\nabla \phi} \left(e^{2f}(R-2 \log^+u-3|\Lambda|/2)+\|e^f\nabla \phi\|_{\infty}^2 \right) \|u\|^2_2.
$$
\end{lem}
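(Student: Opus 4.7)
\noindent\emph{Proof proposal.} The plan is to test the differential inequality against the weighted function $u\psi^2$ with $\psi := \phi e^f$, integrate by parts, and then use the hypothesis on $f$ to produce an exact pointwise cancellation on $\mathrm{supp}\,\phi$ that leaves only error terms supported on $\mathrm{supp}\,\nabla\phi$.

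First, multiply the inequality $4\Delta u - Ru + 2u\log u + \Lambda u \ge 0$ by $u\psi^2 \ge 0$ and integrate over $M$ (valid since $\phi$ is compactly supported, so the IBP is clean). A routine double integration by parts gives the clean identity
\begin{equation*}
4\int \psi^2 u\, \Delta u\, dV \;=\; 4\int u^2|\nabla\psi|^2\, dV \;-\; 4\int|\nabla(\psi u)|^2\, dV,
\end{equation*}
so the integrated inequality becomes
\begin{equation*}
\int \psi^2 u^2(-R + 2\log u + \Lambda)\, dV \;+\; 4\int u^2|\nabla\psi|^2\, dV \;-\; 4\int|\nabla(\psi u)|^2\, dV \;\ge\; 0.
\end{equation*}

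Next, expand $|\nabla\psi|^2 = e^{2f}(|\nabla\phi|^2 + 2\phi\,\nabla\phi\cdot\nabla f + \phi^2|\nabla f|^2)$ and pair the $4\phi^2 e^{2f} u^2|\nabla f|^2$ piece together with $\psi^2u^2(-R+2\log u+\Lambda)$, forming $\phi^2 e^{2f}u^2\bigl(4|\nabla f|^2 - R + 2\log u + \Lambda\bigr)$. Using $\log u \le \log^+\!u$ and the hypothesis $4|\nabla f|^2 \le R - 2\log^+\!u - 3|\Lambda|/2$ on $\mathrm{supp}\,\phi$, this quantity is pointwise bounded by $-\tfrac{|\Lambda|}{2}\phi^2 e^{2f}u^2$ there; note the constant $3|\Lambda|/2$ is designed so that $\Lambda - \tfrac{3|\Lambda|}{2} \le -\tfrac{|\Lambda|}{2}$ regardless of the sign of $\Lambda$. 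Discarding the nonpositive term $-4\int|\nabla(\psi u)|^2\, dV$ then leaves
\begin{equation*}
\tfrac{|\Lambda|}{2}\int \psi^2 u^2\,dV \;\le\; \int e^{2f} u^2\bigl(4|\nabla\phi|^2 + 8\phi\,\nabla\phi\cdot\nabla f\bigr)\,dV,
\end{equation*}
whose right-hand integrand is supported entirely on $\mathrm{supp}\,\nabla\phi$. Finally, on $\mathrm{supp}\,\nabla\phi$ the cross term is handled by AM--GM, $8\phi\,\nabla\phi\cdot\nabla f \le 4\phi^2|\nabla f|^2 + 4|\nabla\phi|^2$, and $4\phi^2|\nabla f|^2$ is then bounded (again by the hypothesis) by $\phi^2(R - 2\log^+\!u - 3|\Lambda|/2)$. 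Using $\phi \le 1$ and taking pointwise supremum over $\mathrm{supp}\,\nabla\phi$, together with $\int_{\mathrm{supp}\,\nabla\phi} u^2\,dV \le \|u\|_2^2$, yields the claimed inequality.

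The hard part is the algebraic bookkeeping in the middle step: the test function must be chosen as $u\phi^2 e^{2f}$ (no other weight produces the $4\phi^2 e^{2f}u^2 |\nabla f|^2$ contribution in the correct ratio), and the cross term $2\phi\,\nabla\phi\cdot\nabla f$ must be kept intact until after the bulk cancellation. A premature Cauchy--Schwarz on this cross term would leak a $\phi^2|\nabla f|^2$ contribution integrated over all of $\mathrm{supp}\,\phi$, which cannot be absorbed into a quantity depending only on $\mathrm{supp}\,\nabla\phi$, destroying the localization that is the whole point of the lemma.
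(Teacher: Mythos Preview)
Your argument is correct and is precisely the standard weighted test-function computation from Zhang \cite[Lemma~2.3]{Z07}; the paper does not reproduce a proof at all but simply cites Zhang and remarks that replacing the constraint $4|\nabla f|^2 \le R - 2\log^+ u - \Lambda/2$ (valid when $\Lambda\le 0$) by the stronger $4|\nabla f|^2 \le R - 2\log^+ u - 3|\Lambda|/2$ makes the same proof go through for arbitrary sign of $\Lambda$. One small caveat: you justify the integration by parts by asserting that $\phi$ is compactly supported, but the lemma as stated only assumes $\phi\in C^\infty(M)$ with $0\le\phi\le 1$; in the actual application the cutoff is compactly supported, so this is harmless, but strictly speaking the IBP needs separate justification (e.g.\ via the decay of $u$) if you want the lemma in the generality stated.
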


Note that our Lemma \ref{SL102} is slightly different than Lemma $2.3$ in \cite{Z07} as we do not assume $\Lambda \le 0$. Since we impose a stronger restriction on $4|\nabla f|^2$, the proof is identical.

Now we fix a constant $\Lambda_0=n+\dfrac{n}{2}\log{4\pi}-\epsilon_0/2$. It is from Lemma \ref{SL101}, \ref{SL102} and the proof of \cite[Theorem $1.6$]{Z07} that there exists a small constant $\epsilon_1>0$ such that if $\dfrac{\rho}{\sigma} \le \epsilon_1$, then either $\lambda_{\sigma^2}(g(T^-)) \ge \Lambda_0$ or 
\begin{align}\label{S104}
\lambda_{\sigma^2}(g(T^-)) \le \lambda_{\sigma^2}(g(T))+ck(\sigma+1)^3h^3.
\end{align} 

Here the condition of $\dfrac{\rho}{\sigma} \le \epsilon_1$ is assumed to guarantee, see \cite[(2-14)]{Z07}, that
\begin{align}\label{S103}
\frac{R_1(x)}{2}\le R_1(x)-2\log^+ u_1(x)-3\Lambda_0/2 \le R_1(x)
\end{align}
on $\Omega^c_{\rho}$.

In terms of $\mu$-functional, it shows that if $\dfrac{\rho}{\sigma} \le \epsilon_1$, then either $\mu(g(T),\sigma^2) \ge -\epsilon_0/2$ or 
\begin{align}\label{S105}
\mu(g(T^-),\sigma^2) \le \mu(g(T),\sigma^2)+ck(\sigma+1)^3h^3.
\end{align} 

Now we take a sequence of Ricci flow with surgery $\{\mathcal M^i\}$ with a fixed AE manifold $(M,g)$ as the initial condition subject to a uniform $r(t)>0$ and surgery parameter function $\delta_i(0) \to 0$. 

From Theorem \ref{T202}, there exits a constant $T>0$ such that 
\begin{equation}\label{EK700}
\mu_M(g,\tau) \ge -\epsilon_0/2
\end{equation}
for any $\tau \ge T$. 

Then from Theorem \ref{T601}, there exists a compact set $K \subset M$ such that $|\text{Rm}_i| \le 1$ on $(M \backslash K) \times [0,T]$ and we can find a common AE coordinate system for all $g_i(T)$. On the other hand from maximum principle it is easy to show that $\mathcal M^i_0(T)\backslash (M-K)$ have uniform positive lower bound of scalar curvatures. Hence, from Theorem \ref{TC202} there exists $T'>T$ such that
\begin{equation}\label{EK7001}
\mu_M(g_i(T),\tau) \ge -\epsilon_0/2
\end{equation}
for any $\tau \ge T'-T$ and $i$.

Now since all $r(t)$ and $\delta_i(t)$ are decreasing, we can choose $r>0$, $\delta_i \to 0$ as constant parameters on the time interval $[0,T']$.

With all those preparations, Theorem \ref{T102} follows immediately from Theorem \ref{T101} and the following theorem.
\begin{thm} \label{T701}
There are finitely many surgeries for $\mathcal M^i_0$ for $i$ sufficiently large.
\end{thm}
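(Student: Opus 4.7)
My plan is to argue by contradiction: suppose that for infinitely many $i$ the path of components $\mathcal M_0^i$ admits a surgery after time $T'$, and let $T_1^i>T'$ denote the first such surgery time. The essential tension is that the pre-surgery $\delta_i$-neck forces Perelman's $\mu$-functional to be very negative at the neck scale, while the uniform a priori bound from Theorem \ref{TC202}, propagated forward via monotonicity and the surgery jump estimate \eqref{S105}, prevents this.

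For $i$ large, $\delta_i<\epsilon$ and the pre-surgery $\delta_i$-neck $U_i\subset\mathcal M_0^i(T_1^{i,-})$ at scale $h_i$ is an $\epsilon$-neck; the scaling identity $\mu(ag,a\tau)=\mu(g,\tau)$ together with the choice of $\epsilon_0$ give
\[
\mu_{\mathcal M_0^i}(g_i(T_1^{i,-}),h_i^2)\;\le\;\mu_{U_i}(g_i(T_1^{i,-}),h_i^2)\;\le\;-\epsilon_0.
\]
Since by minimality no surgery on $\mathcal M_0^i$ occurs in $(T',T_1^i)$, monotonicity of $\mu$ under Ricci flow gives $\mu_{\mathcal M_0^i}(g_i(T_1^{i,-}),h_i^2)\ge\mu_{\mathcal M_0^i}(g_i(T'),T_1^i-T'+h_i^2)$. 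Propagating the TC202 bound $\mu(g_i(T),\tau)\ge-\epsilon_0/2$ (for $\tau\ge T'-T$) forward from $T$ to $T'$ by alternating monotonicity with \eqref{S105} at the surgery times of $\mathcal M_0^i$ in $[T,T']$, one obtains for every $\tau\ge 0$
\[
\mu(g_i(T'),\tau)\;\ge\;\mu(g_i(T),\,\tau+T'-T)-E_i(\tau)\;\ge\;-\tfrac{\epsilon_0}{2}-E_i(\tau),
\]
where $E_i(\tau)$ is the total surgery error accumulated on $\mathcal M_0^i$ over $[T,T']$ at scale $\sqrt{\tau+T'-T_k}$. Chaining these three inequalities with $\tau=T_1^i-T'+h_i^2$ yields $E_i(\tau)\ge\epsilon_0/2$, which the next step contradicts.

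To control $E_i(\tau)$, Theorem \ref{T503} implies that consecutive surgery times on $[0,T']$ are spaced by at least $h_i^2/(16CD_i)$, bounding the number of surgery times by $O(D_iT'/h_i^2)$. Assuming a uniform bound on the number of surgery caps per surgery time (from the horn structure of $\Omega^0(\rho)$ together with the positive scalar curvature lower bound on the relevant compact set), each contribution in \eqref{S105} is $O((\sqrt{\tau+T'-T}+1)^3 h_i^3)$, whence
\[
E_i(\tau)\;\le\;C\bigl(\sqrt{\tau+T'-T}+1\bigr)^3\,D_ih_i.
\]
The compatibility \eqref{EK500}, $D_ih_i=o(\rho_i^3)=o((r\delta_i)^3)$, forces $D_ih_i\to 0$ as $i\to\infty$. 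For $T_1^i$ bounded in $i$ this gives $E_i(\tau)\to 0$, contradicting $E_i(\tau)\ge\epsilon_0/2$. In the case $T_1^i\to\infty$ we instead start the propagation at $t=0$: Theorem \ref{T202} gives $\mu(g,T_1^i+h_i^2)\to 0$, and running the same argument over $[0,T_1^i]$ combined with the decreasing choice of $\delta_i(t)$ (taken to shrink sufficiently fast after $T'$) forces $E_i^{[0,T_1^i]}\to 0$, again producing a contradiction. Thus no surgery on $\mathcal M_0^i$ occurs after $T'$, and combining with the finiteness of surgeries on $[0,T']$ from Theorem \ref{T503} completes the proof.

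The principal obstacle is precisely this bookkeeping of $E_i$: one must simultaneously control the exploding number $O(D_i/h_i^2)$ of surgeries, the polynomial factor $(\sqrt{\tau}+1)^3$ from \eqref{S105}, and the uniform cap count per surgery time, absorbing all three into the single decay $D_ih_i\to 0$ afforded by \eqref{EK500}. The uniform cap count requires a separate argument from the canonical neighborhood theorem, while the unbounded-$T_1^i$ regime depends on choosing $\delta_i(t)$ to decay fast enough---the flexibility of this choice is what forces the conclusion to hold only for $i$ sufficiently large.
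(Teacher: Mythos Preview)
Your overall strategy---deriving a contradiction between the $\epsilon$-neck forcing $\mu\le -\epsilon_0$ and the uniform lower bound $\mu\ge -\epsilon_0/2$ from Theorem~\ref{TC202}, with the surgery jump estimate~\eqref{S105} controlling the intermediate loss---is exactly the paper's. But the execution has a genuine gap.

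The jump estimate~\eqref{S105} is only available under the hypothesis $\rho_i/\sigma\le\epsilon_1$, and you never verify this. You evaluate the neck at scale $h_i^2$, so when you propagate back through a surgery time $T_k\in[T,T']$ the relevant scale is $\sigma^2=T_1^i-T_k+h_i^2$. If $T_k$ is the last surgery before $T'$ and $T_1^i$ is just past $T'$, then $\sigma$ is comparable to $h_i$, which is far smaller than $\rho_i$ (recall $h_i<\delta_i\rho_i$). At such a step neither alternative of the dichotomy before~\eqref{S105} applies, and you cannot bound the jump in $\mu$ at all. The paper confronts this head-on: it does \emph{not} take the neck at the surgery scale $h_i$. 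Instead, at the last surgery time where the scale is still below $r$, it uses the canonical neighborhood theorem to find an $\epsilon$-neck whose curvature is exactly $(\sigma_s^i)^{-2}$, yielding $\mu\le -\epsilon_0$ at a scale $\sigma_s^i$ just below $r$. From that point backward every scale exceeds $r$, so $\rho_i/\sigma\le\rho_i/r=\delta_i\le\epsilon_1$ is automatic. This ``jump to the right scale'' is the missing idea in your argument.

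Two secondary points. Your ``uniform bound on the number of surgery caps per surgery time'' is not correct as stated; the paper bounds it only by $C\rho_i^{-3}$ via a volume argument (the $k$ disjoint $\epsilon$-tubes at scale $\rho_i$ sit in a set of bounded volume). Fortunately $k\le C\rho_i^{-3}$ is still good enough: combined with the surgery-time count $k_i\le CD_iT'/h_i^2$, the total error is $CT'D_ih_i/\rho_i^3\to 0$ by~\eqref{EK500}, not merely $D_ih_i\to 0$. Finally, the paper avoids your unbounded-$T_1^i$ subcase entirely by first proving the cleaner statement that the \emph{first} surgery past $T$ already lies in $[T,T']$ (point-picking at the singularity gives convergence to the round cylinder, hence $\mu(g_i(T),T^i_{k_i}-T)\le -2\epsilon_0$, contradicting~\eqref{EK7001} if $T^i_{k_i}\ge T'$). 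This confines all the surgeries one must track to $[0,T']$, where the parameters are uniform, and makes the bookkeeping straightforward.
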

\begin{proof}
Suppose the conclusion is false. Then we can assume for all $i$, $\mathcal M^i_0$ has infinitely many surgeries. In particular, we denote the first surgery time past $T$ by $T^i_{k_i}$ for $\mathcal M_0^i$ and all previous surgery times by $\{T^i_1,T^i_2,\cdots, T^i_{k_i-1} \}$. We also set $(\sigma^i_j)^2=T^i_{k_i}-T^i_{k_i-j}$ for $1 \le j \le k_i$ and $T^i_0=0$.

If $T^i_{k_i} \ge T'$, as $T^i_{k_i}$ is a singular time, we can find a sequence of points $\{p^i_v=(x^i_v,t^i_v)\}_{v \in \mathbb N}$ in $\mathcal M^i_0$ such that $t^i_v \to T^i_{k_i}$ and if $Q^i_j=R(x^i_v,t^i_v)$, $(\mathcal M^i_0(t^i_v),Q_v^ig(t^i_v),x^i_v)$ converges smoothly as $v \to \infty$ to a standard cylinder $(S^2 \times \mathbb R,g_c)$. Then we have
\begin{align}
-2\epsilon_0 \ge& \mu_{S^2 \times \mathbb R}(g_c,1) \notag\\ 
\ge&\lim_{v \to \infty}\mu(Q^i_vg_i(t^i_v),1) \notag\\
=&\lim_{v \to \infty}\mu(g_i(t^i_v)), 1/Q^i_v) \notag \\
\ge& \lim_{v \to \infty}\mu(g_i(T), 1/Q^i_v+t^i_v-T) \notag \\
=&\mu(g_i(T),T^i_{k_i}-T)
\end{align}
which contradicts \eqref{EK7001} since $T^i_{k_i}-T \ge T'-T$.

Therefore, we can assume all $T^i_{k_i} \le T'$.

By the same point-picking method as above, we have 
\begin{equation}\label{S106}
\mu(g(T^i_{k_i-1}),(\sigma^i_1)^2)=\mu(g(T^i_{k_i-1}),T^i_{k_i}-T^i_{k_i-1}) \le -2\epsilon_0
\end{equation}

We assume that $s$ is the largest integer among $1$ to $k_i$ such that $(\sigma^i_s)^2 < r^2$, where $r$ is the canonical neighborhood scale. As $T$ is a large number and $r$ is small, $T^i_{k_i-s}$ is a singular time. Now we can find a point $p$ which is the center of an $\epsilon$-neck such that $R(p)=(\sigma^i_s)^{-2}$. By our choice of $\epsilon$, we have $\mu(g(T^{i-}_{k_i-s}),(\sigma^i_s)^2) \le -\epsilon_0$. By using the monotonicity formula,
\begin{equation}\label{EK708}
\mu(g(T_{k_i-(s+1)}^i),(\sigma_{s+1}^i)^2) \le -\epsilon_0.
\end{equation}

Now let $l$ be the largest integer from $s+1$ to $k_i$ such that
\begin{equation}\label{S107}
\mu(g(T^i_{k_i-j}),(\sigma^i_j)^2) \le -2\epsilon_0/3.
\end{equation}
for any $s+1 \le j \le l$.

If $l \ne k_i$, then $T^i_{k_i-j}$ is a surgery time for any $j \in [s+1,l]$. Recall that by our assumption $(\sigma^i_j)^2 \ge r^2$. In this case, from \eqref{S105} ,\eqref{S106} and \eqref{S107} we have
\begin{equation}\label{EK701a}
\mu(g(T_{k_i-j}^{i-}),(\sigma_j^i)^2) \le \mu(g(T^i_{k_i-j}),(\sigma_j^i)^2)+ck(\sigma_j^i+1)^3h_i^3 \le \mu(g(T^i_{k_i-j}),(\sigma_j^i)^2)+Ckh_i^3
\end{equation}
since in this case $\dfrac{\rho_i}{\sigma_j^i} \le \dfrac{\rho_i}{r}= \delta_i \le \epsilon_1$ if $i$ is sufficiently large and $(\sigma_j^i)^2 \le T'$.

Now we estimate $k$. On $\mathcal M^i_0(T^{i-}_{k_i-j})$, we can find $k$ disjoint $\epsilon$-tubes and each contains an $\epsilon$-neck with center $p$ and $R(p)=\rho_i^{-2}$. The total volume of all $k$ tubes are at least $ck\rho_i^3$. Since all surgeries are done in a compact set $K$ whose volume is decreasing along the flow, we have 
\begin{equation}\label{EK702}
k \le C\rho_i^{-3}.
\end{equation}

Combining \eqref{EK701a} and \eqref{EK702}, we have
\begin{equation}\label{EK704}
\mu(g(T_{k_i-j}^{i-},(\sigma_j^i)^2) \le \mu(g(T^i_{k_i-j}),(\sigma_j^i)^2)+C\frac{h_i^3}{\rho_i^3}.
\end{equation}

Now we take sum from $s+1$ to $l$, then
\begin{equation}\label{EK706}
\mu(g(T_{k_i-l}^{i-},(\sigma_l^i)^2) \le -\epsilon_0+Ck_i\frac{h_i^3}{\rho_i^3}.
\end{equation}

We know that from Theorem \ref{T503}, the gap of two consecutive surgeries is at least $CD_i^{-1}h_i^2$, then
\begin{equation}\label{EK707}
k_i\le CD_iT'h_i^{-2}.
\end{equation}

Hence from \eqref{EK706}, 
\begin{equation}\label{EK708}
\mu(g(T_{k_i-l}^{i-}),(\sigma_l^i)^2) \le -\epsilon_0+CT'\frac{D_ih_i}{\rho_i^3}.
\end{equation}

From our choice of parameters, i.e. \eqref{EK500}, $\lim_{i \to \infty}\frac{D_ih_i}{\rho_i^3}=0$, so for $i$ sufficiently large, $CT'\frac{D_ih_i}{\rho_i^3} \le \epsilon_0/3$.

Therefore we have $\mu(g(T_{k_i-l}^{i-}),(\sigma_l^i)^2) \le -2\epsilon_0/3$. Again by using the monotonicity formula,
\begin{equation}
\mu(g(T_{k_i-(l+1)}^{i}),(\sigma_{l+1}^i)^2) \le -2\epsilon_0/3.
\end{equation}
But this contradicts the maximality of $l$.

Hence $l$ must be $k_i$ and in this case
\begin{equation}
\mu(g(0),(\sigma_k^i)^2) \le -2\epsilon_0/3.
\end{equation}

But this contradicts \eqref{EK700} since $(\sigma^i_{k_i})^2 \ge T$.

Thus, the proof of Theorem \ref{T701} is complete.
\end{proof}

\emph{Proof of Theorem \ref{T102}:} 
From Theorem \ref{T701} there exists a Ricci flow with surgery from $(M,g)$ such that there are only finitely many surgeries. Since the mass is preserved along Ricci flow and surgery times, $m(g)$ is nonnegative by Theorem \ref{T101}. If the equality holds, from Theorem \ref{T101} there is no surgery and $(M,g)=(\mathbb R^n,g_E)$.

\begin{cor}\cite[Corollary $6$]{RH12} \label{C701}
Any orientable AE $3$-manifold $M$ with scalar curvature $R \ge 0$ has the following diffeomorphism type
$$
M \cong \mathbb R^3 \# S^3/\Gamma_1 \#\ldots \# S^3/\Gamma_k \# (S^2 \times S^1) \# \ldots \# (S^2 \times S^1)
$$
where there are finitely many connected sums.
\end{cor}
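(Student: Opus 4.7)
The plan is to read off the diffeomorphism type of $M$ from the Ricci flow with surgery constructed in Section 7, by reversing all surgery operations and tracking connected-sum decompositions.

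If $R \equiv 0$ on $M$, then the strong maximum principle applied to the evolution equation $\partial_t R = \Delta R + 2|Rc|^2$ along the Ricci flow starting at $g$ forces $Rc(g) \equiv 0$, whence Theorem \ref{R02} gives $M \cong \mathbb R^3$, the stated form with no summands. Otherwise, the strong maximum principle yields $R > 0$ everywhere for any $t > 0$, and replacing $g$ by $g(t_0)$ for small $t_0 > 0$ does not change the diffeomorphism type of $M$, so we may assume $R > 0$ to begin with. Apply Theorem \ref{T502} to obtain a Ricci flow with surgery $\mathcal M$ on $(M,g)$, and choose $\delta(\cdot)$ small enough that Theorem \ref{T701} applies to the AE path of components $\mathcal M_0$; let $T_1 < \cdots < T_N$ be its surgery times. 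For $t > T_N$ the flow on $\mathcal M_0$ is smooth, so by Theorem \ref{T101} (together with the convergence to Euclidean space established at the end of Section 4), the time slice $\mathcal M_0(t)$ is diffeomorphic to $\mathbb R^3$.

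Now reverse the surgeries backwards from $T_N$. At each time $T_j$, the pre-surgery manifold $\mathcal M(T_j^-)$ is reconstructed from $\mathcal M(T_j)$ together with the finitely many components discarded at $T_j$. The surgery cuts a disjoint family of strong $\delta$-necks and caps each side with a $3$-ball; reversing a single cut pairs up its two caps and produces either a connected summand $S^1 \times S^2$ (when the two caps lie in the same component) or a connected sum of two distinct components. Each discarded component is, by the classification recalled in Section 6, diffeomorphic to $S^3/\Gamma$, $S^1 \times S^2$, or $\mathbb{RP}^3 \# \mathbb{RP}^3 \cong S^3/\mathbb Z_2 \# S^3/\mathbb Z_2$, so it contributes only summands of the allowed type. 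The remaining compact components of intermediate time slices $\mathcal M(t)$ carry $R > 0$ and become extinct in finite time (since $dR_{\min}/dt \ge \frac{2}{3}R_{\min}^2$); an induction on the extinction time, with base case furnished by the discarded-component list, shows that each such compact component is itself a finite connected sum of $S^3/\Gamma$'s and $S^2 \times S^1$'s.

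Assembling the reversed operations around the base $\mathcal M_0(t) \cong \mathbb R^3$ yields the claimed decomposition
\[
M \cong \mathbb R^3 \, \# \, S^3/\Gamma_1 \# \cdots \# S^3/\Gamma_k \# (S^2 \times S^1) \# \cdots \# (S^2 \times S^1).
\]
The main obstacle will be the combinatorial bookkeeping for reversing many simultaneous cuts at each surgery time, and the parallel induction on the transient compact components; both are made finite by Theorem \ref{T701} together with the finite-extinction property coming from $R > 0$.
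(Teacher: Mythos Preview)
Your proposal is correct and follows essentially the same route as the paper: invoke Theorem~\ref{T701} to obtain finitely many surgeries on the AE path $\mathcal M_0$, identify the post-last-surgery slice as $\mathbb R^3$ via Theorem~\ref{T101}, observe finite extinction of the remaining compact components (since $R>0$), and reverse the surgeries to recover $M$ as a connected sum of $\mathbb R^3$ with finitely many $S^3/\Gamma$'s and $S^2\times S^1$'s. You spell out the $R\equiv 0$ base case and the surgery-reversal bookkeeping more explicitly than the paper does, but the underlying argument is the same.
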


\begin{proof}
From Theorem \ref{T701}, we have a Ricci flow with surgery $\mathcal M$ such that there are only finitely many surgeries on $\mathcal M_0$. After a large time $T$, the Ricci flow on $\mathcal M_0(T)$ has longtime existence, each of whose timeslice by Theorem \ref{T101} is diffeomorphic to $\mathbb R^3$. Moreover, at time $T$, all other finitely many components of $\mathcal M(T)$ are compact manifolds with $R>0$. Therefore they must extinct after finite time. Therefore we can recover the diffeomorphism type of $M$ by performing connected sum of $\mathbb R^3$ with finitely many $S^3/\Gamma$ and $S^2\times S^1$.
\end{proof}

\begin{rem}
Robert Haslhofer obtained the same result, see details in \cite[Corollary $6$]{RH12}, by using the min-max argument of Colding-Minicozzi \cite{CM08}.
\end{rem}

A natural question is whether we have the same result if we only assume $g_{ij}-\delta_{ij} \in C^2_{-\sigma}$.

\appendix
\newpage
\begin{appendices}
\section{Gradient Ricci solitons on ALE manifolds}
In this section we prove some results about Ricci gradient solitons on ALE manfolds.

\begin{defn} \label{DA1}
A smooth Riemannian manifold $(M^n,g)$ is called an asymptotically locally Euclidean (ALE) end of order $\sigma > 0$ if there exist a finite subgroup $\Gamma \subset O(n)$ acting freely on $\mathbb{R}^n \backslash B(0,R)$, a compact set $K \subset M^n$ and a $C^{\infty}$ diffeomorphism $\Phi: M^n \backslash K \to (\mathbb{R}^n \backslash B(0,A))/\Gamma$ such that under this identification,
\begin{align} 
g_{ij} &=\delta_{ij}+O(r^{-\sigma}),\\
\partial ^{|k|}g_{ij} &=O(r^{-\sigma-k}),
\end{align} 
for any partial derivatives of order $k$ as $r \to \infty$, where $r$ is the Euclidean distance. A complete, noncompact manifold $(M^n,g)$ is called ALE if $M^n$ can be written as the disjoint union of a compact set and finitely many ALE ends \cite{BKN89} \cite{TV05}. For an ALE end, if the group $\Gamma$ in the definition is trivial, we call it a \emph{trivial end} or AE end, otherwise we call it a \emph{nontrivial end}. As before, we assume that $r$ is a positive function defined on entire manifold $M^n$.
\end{defn}

\begin{defn}\cite[$(4.1)$]{CLN06}
 A metric $g$ for a manifold $M^n$ is called a gradient Ricci soliton if there is a smooth function $f: M^n \to \mathbb R$ such that
\begin{equation} \label{EA00}
Rc+\text{\text{Hess}} (f)+\frac{\lambda}{2} g=0.
\end{equation}
It is called steady when $\lambda=0$, shrinking when $\lambda=-1$ and expanding when $\lambda=1$.
\end{defn}

In \cite{H95} R. Hamilton proved the following identity for gradient steady Ricci solitons
\begin{equation}\label{EA02}
R+|\nabla f|^2=\Lambda
\end{equation}
where $\Lambda$ is a constant. Since on an ALE manifold the scalar curvature $R=O(r^{-2-\sigma})$, $|\nabla f|$ is bounded from \eqref{EA02}.
It can be proved, see for example in \cite[Theorem $4.1$]{CLN06}, that there exists an eternal solution $g(t) \, (-\infty < t < \infty)$ of the Ricci flow with $g(0)=g$ such that $g(t)=\phi(t)^*g$  where $\phi(t)$ is the $1$-parameter family of diffeomorphisms 
generated by $\nabla f$.

Since the solution $g(t)$ is self-similar, its curvature operater $|\text{Rm}(x,t)|$ is uniformly bounded as $|\text{Rm}(x,0)|$ is bounded for an ALE manifold. Moreover, $R \ge 0$ for every ancient complete solution of Ricci flow, see \cite[Corollary $2.5$]{BL07}. By the strong maximum principle either $R>0$ or $M$ is Ricci-flat. In the first case, it implies in particular that the constant $\Lambda$ in \eqref{EA02} is positive.

In addition, if the steady gradient Ricci soliton is nontrivial, the manifold has to be one-ended, see \cite[Corollary $1.1$]{MW11}.

Now we have

\begin{thm}\label{TA01}
If $(M^n,g)$ is an ALE manifold such that $g$ is a gradient steady Ricci soliton, then $g$ is Ricci-flat.
\end{thm}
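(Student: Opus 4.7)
The plan is to argue by contradiction. Suppose $g$ is not Ricci-flat; then, as noted just above the statement, the strong maximum principle applied to the ancient self-similar flow $g(t)=\phi(t)^{*}g$ forces $R>0$ on $M$, Hamilton's identity $R+|\nabla f|^{2}=\Lambda$ gives $\Lambda>0$ and $f$ non-constant, and by the Munteanu--Wang one-endedness quoted in the excerpt, $M$ has a single ALE end $E\cong(\mathbb{R}^{n}\setminus B)/\Gamma$. I would then split on whether $\Gamma$ is trivial.

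If $\Gamma\neq\{1\}$, lift $f$ to a $\Gamma$-invariant function $\widetilde f$ on the universal cover $\mathbb{R}^{n}\setminus B$ of $E$, so that $\nabla\widetilde f$ is $\Gamma$-equivariant. The ALE decay $|\nabla^{2}\widetilde f|=|\mathrm{Rc}|=O(r^{-2-\sigma})$ is integrable along radial rays and makes $\nabla\widetilde f$ Cauchy across spheres of radius $r$: connecting two points on $S_{r}$ by an arc of length $O(r)$ yields
\[
  |\nabla\widetilde f(\widetilde p_{1})-\nabla\widetilde f(\widetilde p_{2})|
  \;\le\; Cr\cdot r^{-2-\sigma}\;=\;Cr^{-1-\sigma}\;\xrightarrow{r\to\infty}\;0 .
\]
Combined with the radial integrability, $\nabla\widetilde f$ therefore approaches a single vector $v\in\mathbb{R}^{n}$ at infinity with $|v|=\sqrt{\Lambda}>0$. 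But equivariance forces $\gamma v=v$ for every $\gamma\in\Gamma$, and since $\Gamma$ acts freely on $\mathbb{R}^{n}\setminus\{0\}$ the only vector fixed by all of $\Gamma$ is $v=0$, contradicting $\Lambda>0$.

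If $\Gamma=\{1\}$, the end is AE and I would run the $\mu$-functional argument of Sections~3--4. Since $\phi(t)$ is an isometry of $g$, $\mu(g(t),\tau)=\mu(g,\tau)$, so Perelman's monotonicity under the eternal bounded-curvature flow gives that $\tau\mapsto\mu(g,\tau)$ is non-increasing. Lemma~\ref{L4001} applied to $(M,g,p_{n})\to(\mathbb{R}^{n},g_{E},p_{\infty})$ for any $p_n\to\infty$ yields $\mu(g,\tau)\le 0$, while Theorem~\ref{T202} yields $\lim_{\tau\to\infty}\mu(g,\tau)=0$; together these force $\mu(g,\tau)\equiv 0$, and Theorem~\ref{T201} applied to the eternal self-similar flow then produces $(M,g)=(\mathbb{R}^{n},g_{E})$, contradicting $R>0$. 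The most delicate step is the uniform-in-direction stabilization of $\nabla\widetilde f$ to the single vector $v$ in the nontrivial-$\Gamma$ case; the spherical Cauchy estimate above is the key technical point, and one must choose the comparison arc so that it stays in the region where the ALE Hessian decay is valid.
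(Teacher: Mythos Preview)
Your proof is correct and the trivial-$\Gamma$ case is essentially the paper's own argument: self-similarity plus diffeomorphism invariance of $\mu$ makes $\tau\mapsto\mu(g,\tau)$ monotone, and this contradicts Theorem~\ref{T202}. One wording slip: $\phi(t)$ is not an isometry of $g$; what you use is that $\mu(\phi(t)^{*}g,\tau)=\mu(g,\tau)$ by diffeomorphism invariance, exactly as in the paper.

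For the nontrivial-$\Gamma$ case your argument is genuinely different from the paper's and somewhat more elementary. The paper rescales: it sets $g_i=r_i^{-2}g$, $f_i=r_i^{-1}f$, extracts a smooth subsequential limit $f_E$ on $(\mathbb{R}^n/\Gamma)\setminus\{0\}$, shows $\Delta_{g_E}f_E=0$, $|\nabla f_E|^2=\Lambda$, $\text{Hess}\,f_E=0$, and concludes that $f_E$ is a nontrivial linear function, which cannot descend to $\mathbb{R}^n/\Gamma$. You instead work directly on the cover and show $\nabla\widetilde f$ has a single asymptotic value $v$ with $|v|=\sqrt{\Lambda}$; $\Gamma$-invariance of $\widetilde f$ then forces $\gamma v=v$, contradicting freeness. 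Both arguments encode the same geometric fact---the potential becomes linear at infinity---but yours avoids the compactness/extraction step and the removable-singularity argument for $f_E$.

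One small technical refinement in your spherical Cauchy estimate: to compare $\nabla\widetilde f$ at different points of $S_r$ as vectors in $\mathbb{R}^n$ you are really comparing the coordinate components $\partial_i\widetilde f$, and $\partial_i\partial_j\widetilde f=\nabla^2_{ij}\widetilde f+\Gamma_{ij}^k\partial_k\widetilde f=O(r^{-2-\sigma})+O(r^{-1-\sigma})=O(r^{-1-\sigma})$, since the Christoffel symbols only decay like $r^{-1-\sigma}$. The resulting spherical difference is $O(r)\cdot O(r^{-1-\sigma})=O(r^{-\sigma})$, not $O(r^{-1-\sigma})$; this is still $o(1)$, so your conclusion is unaffected.
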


\begin{proof}
(Nontrivial end) If $M^n$  is not Ricci-flat, we assume that \eqref{EA02} holds for a positive $\Lambda$. Moreover we assume that $|\Gamma|>1$.

From \eqref{EA02}, we have $|\nabla f| \le \Lambda^{\frac{1}{2}}$ and hence $f$ increases at most linearly. We can assume
\begin{align}\label{SA01}
|f(x)|\le C(1+r(x))
\end{align}
where $r$ is the function in the definition of ALE manifolds.

Now if we take any sequence $r_i \to +\infty$, then $(M, r_i^{-2}g)$ converges to $(\mathbb R^n /\Gamma,g_E)$ in the Gromov-Hausdorff sense. Moreover, the convergence is smooth away from $0$ by the Definition \ref{DA1}.
If we set $f_i=r_i^{-1}f$, then it is straightforward to see from \eqref{SA01} that $f_i$ are locally uniformly bounded on $\mathbb R^n/ \Gamma$.

Now by taking trace of \eqref{EA00}, we have 
\begin{equation}\label{EA03}
R+\Delta f=0.
\end{equation} 

Rewrite \eqref{EA03} in terms of $r_i^{-2}g$ and $f_i$, we have
\begin{align}\label{SA03}
\Delta_{g_i}f_i=r_i^2\Delta_gf_i=r_i\Delta_gf=-r_iR.
\end{align}

From the elliptic equation \eqref{SA03} and the fact that $R$ decays more than quadratically, $f_i$ converges to a function $f_E$ in $C_{\text{loc}}^{\infty}(\mathbb R^n/ \Gamma-\{0\})$. Moreover,
\begin{align}\label{SA04}
\Delta_{g_E}f_E=0,
\end{align}
By lifting everything from $\mathbb R^n/ \Gamma$ to $\mathbb R^n$, we know that since $f_E$ is a bounded harmonic function near $0$, it must be smooth on entire $\mathbb R^n$, see \cite[Theorem $3.9$]{ABR92}.

In addition,
\begin{align}\label{SA02}
|\nabla_{g_i}f_i|_{g_i}^2=r_i^2|\nabla_gf_i|^2_g=|\nabla_g f|_g^2=\Lambda-R.
\end{align} 
and hence by taking the limit we obtain
\begin{align}\label{SA05}
|\nabla_{g_E}f_E|^2=\Lambda.
\end{align}

Now from \eqref{EA00},
\begin{align}\label{SA06}
\left|\text{Hess}_{g_i}f_i\right|_{g_i}=r_i\left|\text{Hess}_{g}f_i\right|_g=r_i|Rc|_g.
\end{align}

Therefore, by taking the limit,
\begin{align}\label{SA07}
\left|\text{Hess}_{g_E}f_E\right|_{g_E}=0.
\end{align}

By considering \eqref{SA05} and \eqref{SA07}, we know that $f_E$ must be a nontrivial linear function. But it is not possible as $f_E$ is also defined on $\mathbb R^n/ \Gamma$.

(Trivial end):Assume that the ALE end $E$ of $M^n$ is trivial. 
From Theorem \ref{T201}, we can assume for all $\tau >0$, $\mu(g,\tau)<0$ since the Ricci flow solution of the steady soliton is eternal and $M^n$ is not Ricci-flat.

For any $\bar{\tau}>0$, by the monotonicity formula, $\mu(g(t),\bar{\tau}-t)$ is increasing for all $0 \le t < \bar{\tau}$. Therefore
$$
\mu(g(t),\bar{\tau}-t)=\mu(\phi(t)^*g,\bar{\tau}-t)=\mu(g,\bar{\tau}-t)
$$
is increasing for all $0 \le t < \bar{\tau}$. 
Since $\bar{\tau}$ can be any positive number, $\mu(g,\tau)$ is decreasing for all $\tau >0$. So it contradicts Theorem \ref{T202}.
Thus, the proof of Theorem \ref{TA01} is complete.
\end{proof}

For a complete Ricci shrinking soliton, we have
\begin{thm} \label{TA02}
If $(M^n,g)$ is an ALE manifold such that $g$ is a gradient shrinking Ricci soliton, then $(M^n,g)=(\mathbb R^n, g_E)$.
\end{thm}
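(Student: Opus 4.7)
I would mirror the proof of Theorem TA01 and split into cases by scalar curvature. The shrinking soliton generates the ancient Ricci flow $g(t)=(1-t)\phi_t^*g$ on $(-\infty,1)$, so $R(g(t))\ge 0$ along this flow; the strong maximum principle for $\partial_tR=\Delta R+2|Rc|^2$ then gives either $R\equiv 0$ or $R>0$ strictly on $M$.

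In the Ricci-flat case, the soliton equation reduces to $\text{Hess}(f)=g/2$, so along any unit-speed geodesic $\gamma$ one has $(f\circ\gamma)''(t)=1/2$. Exactly as in the proof of Theorem \ref{T201}, $f$ is strictly convex with a unique minimum $O$ and $f(x)=f(O)+d(x,O)^2/4$. Taking the trace of the soliton equation gives $\Delta d^2=2n$, and the Ricci-flat equality case of Bishop-Gromov comparison forces $(M,g)=(\mathbb R^n,g_E)$; in particular this trivializes the ALE group $\Gamma$.

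When $R>0$, I would exploit the scaling and diffeomorphism invariance of Perelman's $\mu$-functional. The key identity is
\begin{align*}
\mu(g(t),\bar\tau-t)=\mu\bigl((1-t)\phi_t^*g,\,\bar\tau-t\bigr)=\mu\!\Bigl(g,\tfrac{\bar\tau-t}{1-t}\Bigr),
\end{align*}
so Perelman's monotonicity of $\mu(g(t),\bar\tau-t)$ in $t$ translates into: $\tau\mapsto\mu(g,\tau)$ is non-increasing on $(0,1]$ and non-decreasing on $[1,\infty)$, with $\mu(g(t),1-t)\equiv\mu(g,1)$ matching the equality case of Perelman's entropy monotonicity at the critical scale $\bar\tau=1$. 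In the AE subcase, Theorem \ref{T202} yields $\lim_{\tau\to\infty}\mu(g,\tau)=0$, hence $\mu(g,1)\le 0$. Theorem \ref{T201} applied to the finite-time flow on $[0,1)$ gives $\mu(g,\bar\tau)<0$ for every $\bar\tau\in(0,1)$ unless $(M,g)=(\mathbb R^n,g_E)$; the rigidity portion of its proof (which under $\mu=0$ forces both the shrinking equation \emph{and} Ricci-flatness) makes the combination $\{R>0,\ \mu(g,1)=0\}$ inconsistent, so $\mu(g,1)<0$. Blowing down the $\mu$-minimizer (which by the soliton equation is essentially $e^{-f/2}$) along a sequence $p_i\to\infty$, the Cheeger-Gromov convergence $(M,g,p_i)\to(\mathbb R^n,g_E)$ combined with Lemma \ref{L4001} would produce a nontrivial Gaussian minimizer on $\mathbb R^n$ with value $0$, contradicting $\mu(g,1)<0$.

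The main obstacle is handling the non-trivial ALE case $|\Gamma|>1$. Unlike the steady soliton case of Theorem \ref{TA01}, where $|\nabla f_E|^2\equiv\Lambda>0$ produced a non-$\Gamma$-invariant linear function, the shrinking analog rescales $f$ to the $\Gamma$-invariant profile $f_E=|x|^2/4+\text{const}$, so the blow-down of $f$ alone yields no contradiction. Triviality of $\Gamma$ must instead be extracted from the refined asymptotics of the $\mu$-minimizers: they blow down on $(\mathbb R^n/\Gamma,g_E)$ to $\Gamma$-invariant Gaussians realizing the strictly negative value $-\log|\Gamma|$, yielding $\limsup_{\tau\to\infty}\mu(g,\tau)\le-\log|\Gamma|<0$ via Lemma \ref{L4001}. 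Together with the equality case of Perelman's monotonicity at $\bar\tau=1$ along the soliton flow, this forces $|\Gamma|=1$, reducing the argument to the AE case above.
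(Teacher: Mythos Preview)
The paper's proof is a one-liner that takes a completely different route: it cites the result of Chow--Lu--Yang \cite{BLY11} that every non-flat complete gradient shrinking soliton satisfies $\liminf_{d(x,O)\to\infty} R(x)\,d(x,O)^2>0$. Since the ALE condition forces $R=O(r^{-2-\sigma})$, this lower bound fails, so the soliton must be flat; the conclusion $(M,g)=(\mathbb R^n,g_E)$ is then immediate (your Ricci-flat paragraph handles exactly this step).

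Your $\mu$-functional strategy, by contrast, does not close. The scaling identity $\mu(g(t),\bar\tau-t)=\mu(g,(\bar\tau-t)/(1-t))$ and the resulting monotonicity on $(0,1]$ and $[1,\infty)$ are correct, and combining Theorem~\ref{T201} with the non-increase on $(0,1]$ does give $\mu(g,1)<0$ in the AE case. But the final sentence --- ``blowing down the $\mu$-minimizer along $p_i\to\infty$ \ldots would produce a nontrivial Gaussian minimizer on $\mathbb R^n$ with value $0$, contradicting $\mu(g,1)<0$'' --- does not yield a contradiction: Lemma~\ref{L4001} only gives $0=\mu(g_E,1)\ge\mu(g,1)$, which is what you already have. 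The picture $\mu(g,1)<0$, $\mu(g,\cdot)$ non-decreasing on $[1,\infty)$, and $\lim_{\tau\to\infty}\mu(g,\tau)=0$ is entirely self-consistent, so no contradiction arises from the $\mu$-functional alone. The nontrivial-end argument has the same defect: you obtain $\mu(g,1)\le\limsup_{\tau\to\infty}\mu(g,\tau)\le -\log|\Gamma|$, but the ``equality case of Perelman's monotonicity at $\bar\tau=1$'' is just the soliton equation itself and gives no independent lower bound on $\mu(g,1)$ with which to force $|\Gamma|=1$. To make this route work you would need an \emph{a priori} lower bound on $\mu(g,1)$ for shrinkers (e.g.\ an identity relating $\mu(g,1)$ to the asymptotic volume ratio), which is essentially equivalent in strength to the scalar-curvature lower bound you are trying to avoid.
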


It was proved in \cite{BLY11} that $\liminf_{d(x,O)\to \infty}R(x)d^2(x,O) >0$ for any non-flat shrinking soliton. So the proof of \ref{TA02} follows immediately since by the ALE condition $|\text{Rm}| \le Cr^{-2-\sigma}$.

There are nontrivial examples of expanding soliton on ALE manifolds, see the constructions in \cite{MTD03}.
\end{appendices}

\vspace{0.5in}

\vskip10pt

Yu Li, Mathematics Department, Stony Brook University,
Stony Brook, NY, 11794, USA;  yu.li.4@stonybrook.edu.\\

\end{document}